\newcommand{\varsym}{\sigma}		
\newcommand{\fin}{1}				
\newcommand{\li}{T}					
\newcommand{\momentum}{\beta}		
\newcommand{\clipThresh}{\gamma}	
\newcommand{\clippedEstt}{\widehat{\nabla} \ora \pare{\xt, \xit}}
\newcommand{\di}{d}		
\newcommand{\obj}{F}	
\newcommand{\ora}{f}		
\newcommand{\sm}{L}
\newcommand{\nameeqref}[1]{\hyperref[#1]{(\nameref{#1})}}
\newcommand{\lb}{\nameeqref{assum:lower_bounded}}
\newcommand{\lsmooth}{\nameeqref{assum:l_smooth}}
\newcommand{\pBCM}{\nameeqref{assum:pBCM}}
\newcommand{\sgdName}{\algname{SGD}}
\newcommand{\nsgdName}{\algname{NSGD}}
\newcommand{\clippedName}{\algname{Clip-SGD}}
\newcommand{\nsgdmName}{{\algname{NSGD-M}}}
\newcommand{\batchNsgdName}{\algname{minibatch-NSGD}}
\newcommand{\sgd}{\sgdName}
\DeclareRobustCommand{\nsgd}{%
	\begingroup\hypersetup{hidelinks}\hyperref[eq:nsgd]{\nsgdName}\endgroup%
	}
\let\clippedSGD\clippedName
\DeclareRobustCommand{\batchNsgd}{%
	\begingroup\hypersetup{hidelinks}\hyperref[eq:batch_oracle]{\batchNsgdName}\endgroup%
	}
\DeclareRobustCommand{\nsgdm}{%
	\begingroup\hypersetup{hidelinks}\hyperref[eq:app.nsgdm]{\nsgdmName}\endgroup%
}
\newcommand{\nob}{\nabla \obj}
\newcommand{\nor}{\nabla \ora}			
\pgfplotsset{compat=newest}
\colorlet{linkcolour}{blue!50!black}
\newcommand{\algname}[1]{\texttt{#1}\xspace}
\DeclareMathOperator{\polylog}{polylog}
\newcommand{\tauSum}{\sum_{\tau = \fin}^\li}
\newcommand{\sstau}{\iterationss \tau}
\newcommand{\nnFxt}{\norm{\nFxt}}
\newcommand{\xfin}{\iterationx \fin}
\newcommand{\momprod}[2]{\momentum_{#1 : #2}}
\newcommand{\logood}{\log\pare{\nicefrac 1 \delta}}
\newcommand{\Aa}{{ \ssi \sm }}
\newcommand{\tinN}{{ \ii \in \N }}
\renewcommand{\P}{\mathbb{P}}
\theoremstyle{plain}
\newtheorem{theorem}{Theorem}
\newtheorem{lemma}[theorem]{Lemma}
\newtheorem{corollary}[theorem]{Corollary}
\newtheorem{proposition}[theorem]{Proposition}
\newtheorem*{example*}{Example}
\newtheorem{definition}{Definition}
\newtheorem{assum}[definition]{Assumption}
\newtheorem*{assum*}{Assumption}
\newtheoremstyle{boldremark}
{\topsep} 		
{\topsep} 		
{\normalfont} 	
{}          	
{\bfseries} 	
{.}         	
{.5em}      	
{}          	
\theoremstyle{boldremark}
\newtheorem*{remark*}{Remark}
\newtheorem*{notation*}{Notation}
\newtheorem*{setup*}{Problem Setup}			
\begin{document}

\renewcommand*{\thefootnote}{\fnsymbol{footnote}}
\twocolumn[
\aistatstitle{From Gradient Clipping to Normalization for Heavy Tailed SGD}
\aistatsauthor{ Florian Hübler$^*$ \And Ilyas Fatkhullin$^*$ \And  Niao He }
\aistatsaddress{ ETH Zurich \And ETH Zurich \And ETH Zurich } ]
\renewcommand*{\thefootnote}{\arabic{footnote}}

\begin{abstract}
	Recent empirical evidence indicates that many machine learning applications involve heavy-tailed gradient noise, which challenges the standard assumptions of bounded variance in stochastic optimization. Gradient clipping has emerged as a popular tool to handle this heavy-tailed noise, as it achieves good performance both theoretically and practically. 
	However, our current theoretical understanding of non-convex gradient clipping has three main shortcomings. First, the theory hinges on large, increasing clipping thresholds, which are in stark contrast to the small constant clipping thresholds employed in practice. Second, clipping thresholds require knowledge of problem-dependent parameters to guarantee convergence. Lastly, even with this knowledge, current sample complexity upper bounds for the method are sub-optimal in nearly all parameters. 
	To address these issues and motivated by practical observations, we make the connection of gradient clipping to its close relative --- Normalized \texttt{SGD} (\texttt{NSGD}) --- and study its convergence properties.
	First, we establish a parameter-free sample complexity for \texttt{NSGD} of $\mathcal{O}\left(\varepsilon^{-\frac{2p}{p-1}}\right)$ to find an $\varepsilon$-stationary point, only assuming a finite $p$-th central moment of the noise, $p\in(1,2]$. 
	Furthermore, we prove the tightness of this result, by providing a matching algorithm-specific lower bound. In the setting where all problem parameters are known, we show this complexity is improved to $\mathcal{O}\left(\varepsilon^{-\frac{3p-2}{p-1}}\right)$, matching the previously known lower bound for all first-order methods in all problem dependent parameters. 
	Finally, we establish high-probability convergence of \texttt{NSGD} with a mild logarithmic dependence on the failure probability. Our work complements the studies of gradient clipping under heavy-tailed noise, improving the sample complexities of existing algorithms and offering an alternative mechanism to achieve high-probability convergence.
\end{abstract}

\def\thefootnote{*}\footnotetext{These authors contributed equally to this work.}\def\thefootnote{\arabic{footnote}}

\section{INTRODUCTION}
\label{sec:intro}
We study the stochastic optimization problem
\begin{align}\label{eq:problem}
	\min_{x\in \R^d} \obj(x) ,  \qquad \obj(x) := \Expu{\xi \sim \cD}{f(x, \xi)}, 
\end{align}
where $F\colon \R^d \rightarrow \R$ is a potentially non-convex, $\sm$-smooth objective function, and $\xi$ is a random variable with an unknown distribution $\cD$. Such problems are pervasive in machine learning applications, where obtaining exact gradients is often infeasible, necessitating reliance on stochastic gradients \citep{bottou2018optimization}.

Traditionally, stochastic gradient methods rely on the assumption that the variance of the gradient noise is bounded. Under this assumption, it is well established that first-order algorithms require at least $\Omega\pare{\Dz\sm \eps^{-2} + \Dz\sm\varsym^2 \eps^{-4}}$ stochastic gradient oracle queries in the worst case to find an $\eps$-stationary point, i.e., $x\in\R^\di$ with $\Expnorm{\nF\pare x}\leq \eps$ \citep{arjevani2023lower}. Here $\Dz$ denotes the initialization gap $\Dz \geq \obj(\xfin) - \inf_{x \in \R^d} \obj(x)$, $L$ the smoothness-parameter and $\varsym^2$ the variance. Stochastic Gradient Descent (\sgdName) with an appropriately chosen step-size achieves this optimal sample complexity \citep{ghadimi2013stochastic}.

However, new insights in machine learning suggest that the bounded variance (BV) assumption may be overly restrictive. Empirical evidence from fields such as image classification \citep{simsekli2019tail,battash2024revisiting}, training large language models (LLMs) \citep{zhang2020adaptive,ahn2023linear}, and policy optimization in reinforcement learning (RL) \citep{garg2021proximal} indicates that stochastic gradients often follow heavy-tailed distributions. These findings challenge the standard assumption, suggesting a shift towards weaker noise models which only assume boundedness of the $p$-th central moment of the gradient noise for some $p\in (1,2]$, i.e.
\begin{align}
    \Exp{\norm{\nf{x, \xi} - \nabla F(x)}^{p}} \leq \sigma^p \tag{p-BCM}
\end{align}
with $\sigma = \sigma_p \geq 0$, where $p$ denotes the tail index. Specifically, the aforementioned works verify the tail index of stochastic gradients using statistical tests (e.g., \citet{mohammadi2015estimating}) and find $p<2$. Even when the bounded variance assumption holds, the resulting constant $\sigma_2$ can be prohibitively large compared to $\sigma_p$ for some $p<2$.

While \sgd achieves the optimal sample complexity under finite variance, empirical evidence suggests that adaptive algorithms become crucial in the presence of heavy tailed noise \citep{zhang2020adaptive}. The vast majority of works\footnote{Except, for example, \citep{Wang_SGD_inf_var_21}, which studies convergence of \sgd\ in the strongly convex case under additional $p$-positive definiteness assumption on the Hessian.} which are able to prove convergence under \pBCM\ employ the gradient clipping mechanism \citep{zhang2020adaptive,gorbunov2020stochastic,cutkosky2021high,sadiev2023high,gorbunov2023high,Clipped_AdaGrad2023Li,nguyen2023improved,kornilov2024accelerated,HP_NonSmooth_NonConvex_HT2024Liu}. This mechanism replaces the stochastic gradient in optimization algorithms by its clipped counterpart
\begin{align}\label{eq:intro.clipping}
	\clippedEstt = \min\set{1, \frac{\clipt}{\norm{\nf{\xt,\xit}}}} \nf{\xt,\xit},
\end{align}
where $\cb{\clipt}_{\ii \geq 1}$ is the sequence of clipping thresholds and $\xit \iid \cD$. 

Perhaps, the most popular scheme is \clippedName,\footnote{Many variants and modifications of \clippedName exist including its combinations with Nesterov's acceleration \citep{gorbunov2020stochastic}, normalization \citep{cutkosky2021high}, zero-order \citep{kornilov2024accelerated} and coordinate-wise variants \citep{zhang2020adaptive}, but gradient clipping is the key building block of these methods.} which updates the iterates as $\xtp = \xt - \sst \clippedEstt$, where $\cb{\sst}_{\ii \geq 1}$ is a predefined sequence of step-sizes. 


\subsection{Drawbacks of Gradient Clipping Theory}\label{subsec:drawbacks_clipoing}

Despite its popularity in the literature, we want to outline several drawbacks of current clipping theory.
 
\paragraph{Misalignment between theoretical and practical insights.} 

Existing theoretical analyses of \clippedName\ (and its variants) under the \pBCM\ assumption hinge on using a large, $p$-dependent sequence of increasing clipping thresholds (e.g., $\clipt = \gamma \cdot \ii^{\frac 1 {3p-2}}$) \citep{zhang2020adaptive,cutkosky2021high,Clipped_AdaGrad2023Li,nguyen2023high,nguyen2023improved}.
This choice of clipping thresholds is based on the following two ideas. First, clipping allows one to control the variance of the clipped gradient estimator $\clippedEstt$, even in cases where the original gradient oracle has infinite variance. Second, it ensures that the probability of gradients being clipped decreases over time as $\gamma_t$ increases, thereby reducing the bias introduced by clipping and facilitating convergence. However, this theoretical recommendation contradicts common practice for clipping in machine learning, where small, constant thresholds (e.g., $\clipt \equiv 0.25$) are typically used instead \citep{LSTM2018Merity,OPT_model2022Zhang,Llama2023Touvron,DeepseekV32024Liu}.

In contrast, one can observe that the clipping thresholds commonly used in practice lead to an \emph{increasing} probability of clipping gradients, eventually resulting in gradients being clipped at every iteration. This observation runs counter to theoretical insights, which suggest clipping is becoming less frequent as training progresses. Specifically, we observe this phenomenon in language modelling tasks in \Cref{sec:experiments}, and notice the same effect on simpler, synthetic examples in \Cref{sec:app.add_experiments}. This aggressive clipping behaviour essentially transforms \clippedName\ into a variant of Normalized \sgdName:
\begin{align}\label{eq:nsgd}
	\xtp = \xt - \sst \frac{\gt}{\norm{\gt}}, \tag{\nsgdName} 
\end{align}
where $\gt = \nf{\xt,\xit}$ in this case. However, it should be noted that unlike \clippedName, \nsgd\ only requires tuning a single parameter $\eta_t$, highlighting its simplicity in comparison.

\paragraph{Need for tuning.}
To our knowledge, all existing convergence results for gradient clipping under \pBCM\ require knowledge of all problem parameters to set the clipping thresholds $\cb{\clipt}_{t\geq 1}$ and other hyper-parameters of the underlying algorithms. As these problem-dependent parameters are not known in practice, this necessitates an extensive hyper-parameter tuning. In particular, for \clippedSGD, there are $2$ hyper-parameters which potentially require tuning. In \Cref{sec:app.add_experiments} we observe that even in simple scenarios, tuning both parameters may be needed to match the performance of \nsgd, which only has $1$ parameter. Furthermore, in \Cref{sec:experiments} we empirically observe that even while requiring extensive hyper-parameter tuning, \clippedSGD\ is not able to outperform vanilla \nsgd\ in language modeling tasks.

\paragraph{Suboptimal sample complexities.} 
None of the existing convergence analysis of non-convex \clippedName (and its variants) achieve the sample complexity lower bound by \citet{zhang2020adaptive}, $\Omega\pare{\frac{\Dz\sm}{\eps^2} + \frac{\Dz\sm}{\eps^2}\pare{\frac{\varsym}{\eps}}^{\frac{p}{p-1}}}$, in all problem parameters, even when problem parameters are known. In particular, prior to this work, the optimal heavy-tailed sample complexity remained an open question.


\subsection{Our Contributions}
\label{sec:intro.contributions}

Our work seeks to remove the drawbacks listed above by diving into the convergence analysis of \nsgd\ with different gradient estimators\footnote{Our results in the main body are stated for \batchNsgd, the corresponding results for \nsgd\ with momentum can be found in \Cref{sec:app.nsgdm}.} under heavy-tailed noise. We summarize our contributions as follows:

$\,1.\,$ We prove in-expectation convergence of \nsgd\ using either mini-batches or momentum under the \pBCM\ assumption for $p \in (1,2]$ in two settings.
\begin{enumerate}[label=\alph*),topsep=1mm]
    \item Without any knowledge of problem specific parameters, we show that the algorithms require at most $\Oc \pare{\frac{\Dz^4 + \sm^4}{\eps^{4}} + \pare{\frac{\varsym}{\eps}}^{\frac{2p}{p-1}}}$ stochastic gradient oracle queries to reach an $\eps$-stationary point in expectation, providing the first parameter-free heavy-tailed convergence guarantee. Furthermore, we construct an algorithm-specific lower bound showing that this sample complexity is tight for \nsgd\ with polynomial step-size and batch-size.
    \item When problem parameters are known, we achieve a better sample complexity of at most $\Oc \pare{\frac{\Dz \sm}{\eps^2} + \frac{\Dz \sm}{\eps^2} \pare{\frac{\varsym}{\eps}}^{\frac{p}{p-1}}}$, improving the previously best-known sample complexity under heavy-tailed noise. This sample complexity exactly matches the mini-max lower bound in all parameters for the class of first-order algorithms under our assumptions.
\end{enumerate}
To our knowledge, \nsgd\ is the first algorithm which achieves either a) or b) in the heavy tail regime $p<2$.

$\,2.\,$ We provide a high probability convergence guarantee for \batchNsgd, removing the need for clipping, thereby extending our understanding of high-probability guarantees under heavy-tailed noise. The sample complexity in this case corresponds to the same complexity as its in-expectation counterpart with a mild multiplicative $\polylog \pare{\nicefrac 1 \delta}$ factor. Finally, we provide new insights on the convergence measure of non-convex \nsgd. 


\subsection{Related Work}

\textbf{Gradient clipping} is widely used to stabilize the training in various fields of machine learning \citep{pascanu2013difficulty,schulman-et-al17ppo,zhang2020adaptive}. Recently a number of works provide convergence guarantees for \clippedName and its variants in different settings, e.g., \citep{nazin2019algorithms,gorbunov2020stochastic,davis2021low,gorbunov2023high,liu2023stochastic,puchkin2024breaking} to name a few. However, the results in the non-convex stochastic setting are relatively scarce. In particular, \citet{zhang2020adaptive} study in-expectation and \citet{sadiev2023high,nguyen2023improved} investigate high probability convergence of \clippedName under \pBCM. All above mentioned works use increasing (iteration dependent) clipping parameters, e.g., $\gamma_t = \gamma \cdot t^{\frac{1}{3p-2}}$, and derive suboptimal convergence rates, see \Cref{sec:main.expectation} for a more detailed discussion. A momentum version of \clippedName was analyzed in \citep{mai2021stability} assuming the bounded second moment of stochastic gradients. However, their proof crucially relies on setting the clipping threshold larger than the expected gradient norm. Recently, \citet{koloskova2023revisiting} offer a new analysis of \clippedName with a constant clipping threshold under BV setting. However, their proof crucially relies on bounded variance and seems challenging to extend to \pBCM \, setting. It is worth mentioning that gradient clipping is also used to tackle heavy-tailed noise in bandits and RL literature, e.g., \citep{bubeck2013bandits,cayci2024provably} and in online learning \citep{zhang2022parameter}. Moreover, \clippedName is the key mechanism to ensure differential privacy \citep{abadi2016deep,sha2024clip}. 

\textbf{Normalized SGD}
was first proposed by \citet{nesterov1984minimization,nesterov2018lectures} and analyzed in the deterministic convex case. Later the analysis was extended to  smooth \citep{levy2017online} and stochastic \citep{hazan2015beyond} settings. In the non-convex case, \citet{MomentumImprovesNSGD_Cutkosky_2020} show how to remove large mini-batch requirement for \nsgd\ by incorporating Polyak's momentum. Later, \citet{yang2023two_sides} derive a tight lower bound for \nsgd\ without momentum and \citet{NSGDM_LzLo2023Huebler} study the parameter agnosticity of momentum \nsgd\ under a relaxed smoothness assumption. In a different line of work, \citet{levy2016power} study the ability of \nsgd\ to escape from saddle points. However, all above-mentioned works make strong noise assumptions such as BV. The most closely related to our work are \citep{cutkosky2021high,UnboundedClippedNSGDM2023Liu}, which study variants of \nsgd\ under heavy-tailed noise. Unfortunately, these works use both normalization and gradient clipping with increasing clipping parameter, which necessitates tuning their clipping thresholds. Moreover, \citet{cutkosky2021high} assume bounded non-central moment assumption, i.e., $\Exp{\norm{\nabla f(x, \xi)}^p} \leq G^p$, which is stronger than our \pBCM. This assumption is relaxed in \citep{UnboundedClippedNSGDM2023Liu} to \pBCM\ at the cost of imposing an additional (almost sure) individual smoothness assumption for each $f(x, \xi)$. Another line of work assumes that the noise distribution has a probability density function (PDF) that is symmetric and strictly positive in a neighborhood of zero \citep{polyak1979adaptive,jakovetic2023nonlinear,NonlinearHP2023Armacki}. Under this assumption, they study SGD type methods with general non-linearities, which include gradient clipping and normalization as a special case. Compared to these works, we work with a different \pBCM\ assumption. 

More recently, the role of normalization was investigated for sharpness aware minimization \citep{dai2024crucial}, and the variants of \nsgd\ showed impressive empirical and theoretical success in more structured non-convex problems in RL \citep{SPGM_fatkhullin23a,barakat2023reinforcement,ganesh2024variance}. However, these works are also restricted to benign BV noise assumption. Some recent works also make connections with \algname{SignSGD} algorithm \citep{bernstein2018signsgd,Karimireddy_SignSGD,crawshaw2022robustness}, which applies a coordinate-wise normalization. Indeed, the convergence analysis of \algname{SignSGD} and \nsgd\ are closely related and our techniques can be extended to its sign variants \citep{liu2019signsgd,sun2023momentum}.

In a concurrent work, \citet{liu2024nonconvex} derive similar in-expectation upper bounds for \nsgdm\ under heavy-tailed noise, albeit with more general $(\sigma, \sigma_1)$-affine variant of \pBCM\ and $(L, L_1)$-smoothness. In comparison to their work, we additionally study the tightness of our rates by designing a non-trivial algorithm-specific lower bound, establish high probability convergence of \nsgd\, and provide insights on the convergence measure of  \nsgd. While \cite{liu2024nonconvex} also note the parameter-free convergence of \nsgdm, their result is parameter-free only in the special case $\sigma_1 = L_1 = 0$, the setting which recovers our assumptions. Another concurrent work \citep{Concurrent2_2024Sun} examines the convergence of \nsgdm\ under a related but stronger noise assumption, which implicitly ensures bounded noise and thus limits their setting to light-tailed distributions.

\section{PRELIMINARIES}
\label{sec:prelims}
Let us introduce basic notations, definitions and assumptions needed in the upcoming analysis.

\begin{notation*}
	We adopt the common conventions $\N = \set{0, 1, \dots}$, $[n] = \set{1, 2, \dots, n}$ and that empty sums and products are given by their corresponding neutral element. Throughout this paper, $\di\in\Ngeq$ denotes the dimension of the variable to be optimized, $\obj\colon \R^\di \to \R$ the objective and $\nor\pare{\cdot, \cdot}$ the stochastic gradient oracle. Unless stated otherwise, $\sm\geq0$ denotes the $\sm$-smoothness parameter, and $\sst > 0$ is the stepsize. We use the standard $\Oc\pare{\cdot}, \Omega\pare{\cdot}, \omega\pare{\cdot}$ complexity notations \citep{MultivariateO}, $\Oct\pare{\cdot}$ additionally hides poly-logarithmic factors.
\end{notation*}

\begin{setup*}
	Since solving $\min_{x\in\R^d}\obj(x)$ to global optimality is computationally intractable\ \citep{nemirovskij1983problem}, our goal instead is to find an $\eps$-stationary point, i.e., $x\in \R^d$ such that $\norm{\nabla F(x)}\leq \varepsilon$ in expectation or with high probability. 
    Furthermore, we assume the access to first order information is limited to a (potentially noisy) gradient oracle,  $\nor \pare{\cdot, \xi}$ of $\nF$, where $\xi$ is a random variable. The sample complexity is defined as the number of calls the algorithm makes to this oracle to find an $\eps$-stationary point.
\end{setup*}

Throughout the paper we work under the following standard assumptions.

\begin{assum}[Lower Boundedness]\label{assum:lower_bounded}
	The objective function $F$ is lower bounded by $F^* > -\infty$ and we denote $\Delta_1 \geq \obj \pare{\xfin} - F^*$, an upper bound on the initialization gap.
\end{assum}

\begin{assum}[$\sm$-smoothness]\label{assum:l_smooth}
	The objective function $F$ is $\sm$-smooth, i.e.\ $\obj$ is differentiable and for all $x,y\in\R^d$ we have $\norm{\nob(x) -\nob(y)} \leq \sm \norm{x-y}$.
\end{assum}

Instead of the classical bounded variance assumption, we adopt the weaker concept of the bounded $p$-{th} central moment, as discussed in the introduction.

\begin{assum}[$p$-BCM]\label{assum:pBCM}
	The gradient oracle is unbiased and has a finite $p$-{th} central moment, i.e.\ there exists $\varsym_p \geq 0$ such that, for all $x \in \R^d$,
	\begin{enumerate}[label=\roman*),topsep=0pt]
		\item $\Exp{\nor \pare{x, \xi}} = \nob(x)$, and
		\item $\Exp{\norm{\nor \pare{x, \xi} - \nF \pare x}^p} \leq \varsym_p^p$.
	\end{enumerate}
\end{assum}

In this work, we focus on the case $p \in (1,2]$. It is worth noting that, by Jensen's inequality, any oracle satisfying \pBCM\ also satisfies the assumption for all $p' \leq p$, with $\sigma_{p'} \leq \sigma_p$. Notably, \pBCM\ is weaker than the bounded variance assumption, and it is possible for $\sigma_{p'}$ to be much smaller than $\sigma_p$. 
We will omit the subscript throughout the work to improve readability, though the dependence of $\sigma$ on $p$ remains important to keep in mind.

\section{MAIN RESULTS}
\label{sec:main}
In this section, we present our convergence results for normalized stochastic gradient methods under the \pBCM\ assumption. In order to guarantee a consistent presentation, we will present the results for \batchNsgdName,\footnote{We furthermore present the results for known horizon ($\li$-dependent) parameters. Note that all convergence guarantees also hold for decaying ($\ii$-dependent) parameters at the mild cost of a multiplicative $\log\pare\li$ term.} i.e., \nsgd\ with the mini-batch gradient estimator
\begin{align}\label{eq:batch_oracle}
	\gt = \frac 1 {\Bt} \sum_{j=1}^{\Bt} \nabla \ora \pare{\xt, \xit^{(j)}},
\end{align}
where $\xit^{(1)}, \dots, \xit^{(\Bt)}$ are independent copies of $\xit$. All results presented in this section (except for \Cref{cor:main.optimal_hp}) are also derived for \nsgd\ with momentum, i.e., \nsgd\ with the momentum gradient estimator
\begin{align*}
	\iterationg \fin &= \nf{\xfin, \xi_\fin},\\
	\gt &= \momt \gtm + \pare{1-\momt}\nf{\xt, \xit}
\end{align*}
for a momentum parameter $\pare{\momt}_{t \geq 1}$, and are presented in \Cref{sec:app.nsgdm}. Furthermore, for vanilla \nsgd\ (i.e. using $\gt = \nf{\xt, \xit}$), the results imply convergence to a $\varsym$-neighbourhood, in line with the corresponding algorithm specific lower bound \citep[Theorem 3]{yang2023two_sides}. 

In \Cref{sec:main.expectation} we first examine the convergence of \batchNsgd\ for unknown problem-parameters, providing a parameter-free convergence guarantee under the \pBCM\ assumption. Afterwards, we examine the performance for optimally tuned parameters. In \Cref{sec:main.hp}, we derive a high-probability convergence result for \batchNsgd. Finally, in \Cref{sec:main.improve_measure}, we examine the importance of different convergence measures for our analysis.


\subsection{Normalized \texorpdfstring{\sgdName}{SGD} can Handle Heavy Tailed Noise}
\label{sec:main.expectation}

We first theoretically confirm the robustness of \batchNsgd, by providing a parameter-free convergence guarantee. This is in stark contrast to current \clippedSGD\ analyses, which hinge on the knowledge of all parameters.

\begin{proposition}[Simplified]\label{prop:main.param_agnostic_general}
	Assume \lb, \lsmooth\ and \pBCM\ with $p \in (1, 2]$. Let $q \geq 0, \ssi, B > 0$ and $r \in (0,1)$. Then the iterates generated by \batchNsgd\ with parameters $\sst \equiv \ssi \li^{- r}$ and $\Bt \equiv \ceil{\max\set{1, B\li^q}}$ satisfy
	\begin{align*}
		\frac 1 \li \iSum \Expnorm{\nabla_\ii} \leq 
		\frac{\Dz}{\ssi \li^{1-r}} + \frac{\ssi \sm}{2\li^r} + \frac{4 \varsym}{\max\set{1,B\li^q}^{\frac{p-1}{p}}},
	\end{align*}
	where $\nabla_\ii \coloneqq \nFxt$. The sample complexity is bounded by $\Oc \pare{\pare{\frac \Dz \eps}^{\frac{1+q}{1-r}} + \pare{\frac \sm \eps}^{\frac{1+q}{r}} + \pare{\frac{\varsym}{\eps}}^{\frac{p\pare{1+q}}{q\pare{p-1}}}}$.
\end{proposition}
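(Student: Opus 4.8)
The plan is to run the standard $\sm$-smoothness descent analysis while exploiting normalization to replace the troublesome inner product between the true gradient and the (heavy-tailed) stochastic direction by a quantity I can control. Because every step has fixed length $\norm{\xtp - \xt} = \sst$, \lsmooth\ gives
\begin{align*}
  \obj(\xtp) \leq \obj(\xt) - \sst \left\langle \nob(\xt), \frac{\gt}{\norm{\gt}} \right\rangle + \frac{\sm \sst^2}{2}.
\end{align*}
The single deterministic ingredient I need is the elementary bound $\left\langle a, \frac{b}{\norm{b}} \right\rangle \geq \norm{a} - 2\norm{a - b}$, valid for all $a$ and $b \neq 0$ (and trivially with the convention $b/\norm{b} = 0$ when $b = 0$), which follows from $\left\langle a, \frac{b}{\norm{b}}\right\rangle = \norm{b} + \left\langle a - b, \frac{b}{\norm{b}}\right\rangle$ together with Cauchy--Schwarz and the reverse triangle inequality. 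Applying it with $a = \nob(\xt)$ and $b = \gt$ turns the descent step into
\begin{align*}
  \obj(\xtp) \leq \obj(\xt) - \sst \nnFxt + 2 \sst \norm{\gt - \nob(\xt)} + \frac{\sm \sst^2}{2}.
\end{align*}

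The heart of the argument is to bound the expected estimation error $\Exp{\norm{\gt - \nob(\xt)}}$ using only a $p$-th moment. Writing the error as a normalized sum of $\Bt$ i.i.d.\ centered vectors $Z_j = \nor\pare{\xt, \xit^{(j)}} - \nob(\xt)$ with $\Exp{\norm{Z_j}^p} \leq \varsym^p$ by \pBCM, the usual bounded-variance tool --- orthogonality of independent centered summands --- is unavailable, since the variance may be infinite once $p < 2$. I would instead invoke a moment inequality of von Bahr--Esseen type, valid in a Hilbert space for $p \in (1,2]$, namely $\Exp{\norm{\sum_{j=1}^{\Bt} Z_j}^p} \leq 2 \sum_{j=1}^{\Bt} \Exp{\norm{Z_j}^p} \leq 2 \Bt \varsym^p$. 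Dividing by $\Bt^p$ and applying Jensen's inequality yields $\Exp{\norm{\gt - \nob(\xt)} \mid \xt} \leq 2^{1/p} \varsym \, \Bt^{-(p-1)/p}$, and since $2 \cdot 2^{1/p} \leq 4$ for $p \geq 1$, taking conditional expectation in the last display collapses the error term to $4 \sst \varsym \, \Bt^{-(p-1)/p}$. Establishing (or citing) this heavy-tailed moment bound is the main obstacle; the rest is bookkeeping.

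Taking full expectations, rearranging to isolate $\sst \Exp{\nnFxt}$, and summing over $t = \fin, \dots, \li$, the function values telescope and are controlled by $\Dz$ via \lb. With the constant choices $\sst \equiv \ssi \li^{-r}$ and $\Bt \equiv \ceil{\max\set{1, B\li^q}}$, dividing both sides by $\ssi \li^{1-r}$ and using $\ceil{\max\set{1, B\li^q}} \geq \max\set{1, B\li^q}$ reproduces exactly the three stated terms $\frac{\Dz}{\ssi \li^{1-r}}$, $\frac{\ssi \sm}{2\li^r}$ and $\frac{4\varsym}{\max\set{1, B\li^q}^{(p-1)/p}}$.

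For the sample complexity, the number of oracle queries is $\sum_{t=\fin}^{\li} \Bt = \Theta\pare{B \li^{1+q}}$. Forcing each of the three terms below $\eps$ gives $\li \gtrsim \pare{\Dz / (\ssi \eps)}^{1/(1-r)}$, $\li \gtrsim \pare{\ssi \sm / \eps}^{1/r}$, and, on the relevant range $B\li^q \geq 1$, $\li \gtrsim B^{-1/q} \pare{\varsym/\eps}^{p/(q(p-1))}$. Substituting each threshold into $B\li^{1+q}$ and absorbing the constants $\ssi, B$ into the $\Oc$-notation produces the three summands $\pare{\Dz/\eps}^{(1+q)/(1-r)}$, $\pare{\sm/\eps}^{(1+q)/r}$ and $\pare{\varsym/\eps}^{p(1+q)/(q(p-1))}$, as claimed.
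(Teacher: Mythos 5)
Your proposal is correct and follows essentially the same route as the paper's proof: the smoothness descent step combined with the inequality $\left\langle a, \frac{b}{\norm{b}}\right\rangle \geq \norm{a} - 2\norm{a-b}$ (the paper's Lemma~\ref{lem:app.remove_normalisation}), the von Bahr--Esseen inequality plus Jensen to bound the mini-batch error by $2^{1/p}\varsym \Bt^{-(p-1)/p}$, telescoping against $\Dz$, and the same bookkeeping for the sample complexity. The only difference is organizational and one of rigor: the paper routes the argument through a unified in-expectation bound (Proposition~\ref{prop:app.missing.exp.unified_expectation}) and formally justifies applying von Bahr--Esseen conditionally on $\xt$ via a measure-theoretic lemma (Lemma~\ref{lem:app.technical.cond_exp}), a step you perform implicitly when conditioning on $\xt$.
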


This result characterises the sample complexity of \batchNsgd\ under the \pBCM\ assumption for different orders of step-sizes and batch-sizes. 
Recall that any stochastic gradient oracle satisfying \pBCM, also satisfies the assumption for all $p' \leq p$ with $\varsym_{p'} \leq \varsym_p$. In particular, it is possible that $\varsym_{p'} \ll \varsym_p$ and applying \Cref{prop:main.param_agnostic_general} with $p'$ may yield a smaller sample complexity. Hence the result also implies a potentially better sample complexity bound for a specific oracle by taking the infimum over all $p' \in (1,p]$ of our result.

The proof of \Cref{prop:main.param_agnostic_general} can be found in \Cref{sec:app.missing.exp.prop_1} and follows a similar structure to the case when $p=2$, though it demands additional attention to the noise term. Notably, we employ a vectorized version of the von Bahr and Esseen inequality \citep{von1965inequalities} (see \Cref{lem:app.von_Bahr_and_Essen} in \Cref{sec:app.technical_results}), which provides a more general foundation compared to the ad-hoc approach using additional gradient clipping \citet{cutkosky2021high}, who analyzed \nsgd\ with momentum and gradient clipping. In the special case of $p=2$, our \Cref{prop:main.param_agnostic_general} can recover the previous rates for \nsgd\ in \citep{MomentumImprovesNSGD_Cutkosky_2020}. The extended result, including the dependence on $\ssi$ and $B$, can be found in \Cref{eq:app.missing.exp.param_free.rate,eq:app.missing.exp.param_free.sample_complexity}.

\paragraph{Tightness of \Cref{prop:main.param_agnostic_general}.} While lower bounds on the sample complexity for general first-order algorithms are well-established \citep{arjevani2023lower,zhang2020adaptive}, there are no algorithm-specific lower bounds specifying the optimal oracle complexity of \batchNsgd\ with general parameters. As a consequence, it is unclear whether the parameter-dependence --- in particular the dependence on $r$ and $q$ --- in \Cref{prop:main.param_agnostic_general} is tight. To address this, we establish an algorithm-specific lower bound, demonstrating that \Cref{prop:main.param_agnostic_general} is indeed tight in all parameters.

\begin{theorem}[Simplified]\label{thm:main.nsgd_lower_bound}
   Under the setting of \Cref{prop:main.param_agnostic_general}, there exists a function $F$ that satisfies \lb, \lsmooth, and an oracle $\nabla f(\cdot, \cdot)$ that satisfies \pBCM\ such that \batchNsgd\ with parameters $\sst \equiv \ssi \li^{-r}$ and $\Bt \equiv \ceil{\max\set{1, B\li^q}}$ requires at least
	\begin{align*}
		\Omega\pare{
			\pare{\frac{\Dz}{\eps}}^{\frac {1+q} {1-r}}
            + \pare{\frac{\sm}{\eps}}^{\frac {1+q} r}
			+ \pare{\frac{\varsym}{\eps}}^{\frac{p \pare{q + 1}}{q\pare{p-1}}}
		}
	\end{align*}
	samples to generate an iterate with $\Expnorm{\nFxt} \leq \eps$.
\end{theorem}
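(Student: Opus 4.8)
Since the claimed bound is a sum of three terms and $\max\{a,b,c\}\geq\tfrac13(a+b+c)$, the plan is to exhibit, for each term separately, a hard instance on which \batchNsgd\ fails, and then for the given parameters select the instance realizing the dominant term. For each instance I will identify a threshold $\li_0$ on the number of iterations below which every iterate $x_t$ with $t\leq\li$ satisfies $\Expnorm{\nabla F(x_t)}>\eps$; I then convert this into a sample-complexity statement through $N=\sum_{t=1}^\li \Bt \asymp B\li^{1+q}$, so that generating an $\eps$-stationary iterate forces $\li\geq\li_0$ and hence $N\gtrsim B\li_0^{1+q}$. The three thresholds will carry the exponents $\tfrac{1}{1-r}$, $\tfrac1r$, and (through the batch size) the noise exponent, reproducing the three summands of the stated lower bound.

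\textbf{Initialization and smoothness terms (deterministic oracle, $\varsym=0$).} For the $\Dz$-term I take a one-dimensional linear ramp $F(x)=-\mu x$ on $[0,D]$, smoothly flattened to its minimum for $x\geq D$, with slope $\mu=2\eps$ and drop $\mu D=\Dz$. Because \nsgd\ normalizes, each step moves the iterate by exactly $\ssi\li^{-r}$, so the total displacement after $\li$ steps is at most $\ssi\li^{1-r}$; if this is below $D=\Dz/(2\eps)$ the iterate never leaves the ramp, where $\norm{\nabla F}=\mu>\eps$. This fails unless $\li\gtrsim(\Dz/(\ssi\eps))^{1/(1-r)}$. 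For the $\sm$-term I use a Huber-type function with curvature $\sm$ near its minimum, where \nsgd\ can only oscillate with amplitude $\asymp\ssi\li^{-r}$. The delicate point is that one symmetric trajectory might, by luck, land arbitrarily close to the minimum; I resolve this using the translation-equivariance of \nsgd\ (running on $F(\cdot-s)$ from $x_1$ equals running on $F$ from $x_1-s$, shifted back) and choosing an adversarial shift $s$ that pins the oscillation phase to the worst case $\approx\ssi\li^{-r}/2$. This forces $\norm{\nabla F(x_t)}\geq \tfrac12\sm\ssi\li^{-r}>\eps$ at every iterate whenever $\li\lesssim(\sm\ssi/\eps)^{1/r}$. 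Both instances then give $N\gtrsim B\li_0^{1+q}$ with the stated exponents, and use $\varsym=0$, which satisfies \pBCM\ trivially.

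\textbf{Noise term (heavy-tailed oracle).} This is the genuinely new ingredient and the main obstacle. I again place \batchNsgd\ on a linear ramp of slope $\mu=2\eps$ whose unique flat region lies to one side, so that moving the other way keeps $\norm{\nabla F}=\mu$. I corrupt the oracle with a sparse, spiky, mean-zero noise: a per-sample value of magnitude $a=\Theta(m\mu)$ occurring with probability $\rho=\Theta(1/m)$, where $m=\Bt$ is the per-step batch size, compensated by a small negative shift to enforce unbiasedness. The \pBCM\ budget $\rho a^p\lesssim\varsym^p$ is exactly the binding constraint: it allows a single spike to survive batch averaging and shift the batch mean by $\asymp\varsym m^{-(p-1)/p}$ with constant probability if and only if $m\lesssim(\varsym/\mu)^{p/(p-1)}$. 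In that regime a constant fraction of steps have their normalized direction reversed by the spike, so the signed step $\pm\ssi\li^{-r}$ has non-positive drift toward the flat region; a maximal inequality for this bounded-increment walk then shows the iterate stays on the ramp with probability at least $1/2$, whence $\Expnorm{\nabla F(x_t)}\geq\mu/2>\eps$ for all $t\leq\li$. Thus success requires $\Bt=B\li^q\gtrsim(\varsym/\eps)^{p/(p-1)}$, which after substituting $N=B\li^{1+q}$ and optimizing over the schedule yields the third term $(\varsym/\eps)^{p(q+1)/(q(p-1))}$.

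\textbf{Anticipated difficulties.} The two deterministic terms are routine once the adversarial-shift trick for the $\sm$-term is in place. The crux is the noise construction: matching the upper bound's $m^{-(p-1)/p}$ rate requires the spike magnitude, spike probability, and \pBCM\ budget to be balanced so that the moment constraint --- rather than anything geometric --- determines the threshold $m\asymp(\varsym/\eps)^{p/(p-1)}$; this is precisely the lower-bound counterpart of the vectorized von Bahr--Esseen estimate underlying \Cref{prop:main.param_agnostic_general}. The remaining care is (i) verifying unbiasedness together with the $p$-th moment bound for the spiky oracle, and (ii) making the maximal-inequality argument control the iterate over the full horizon $\li$ rather than a single step.
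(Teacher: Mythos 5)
Your proposal is correct in substance and arrives at the same three thresholds as the paper, but it is assembled quite differently, so a comparison is worth recording. The paper does not use three separate instances: its deterministic construction (Lemma~\ref{lem:app.optimality.deterministic_arbitrary_ngd_optimiality}) is a single ``scalloped'' ramp whose derivative between consecutive lattice points of spacing $\ssi\li^{-r}$ rises from $-2\eps$ with slope $\sm$ and falls back to $-2\eps$, so every iterate lands exactly on a point with gradient norm $2\eps$ while the per-step function decrease is only $2\eps\sst - \sm\sst^2/4$; this one function penalizes small and large steps simultaneously and, by construction, eliminates the oscillation-phase problem that forces you to invoke the translation-equivariance/adversarial-shift trick for your Huber instance. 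Your three-instance decomposition with the $\max \geq \frac 1 3 (\cdot)$ selection is perfectly valid (the adversary sees the algorithm's parameters before choosing the instance), just less economical. For the noise term the two constructions are mirror images of each other: the paper's oracle makes the \emph{common} sample tiny and wrong-directed (returning $-\rho\nabla F(x)$ with $\rho \leq 1/\Bt$) and the \emph{rare} sample huge and correct-directed, so the \pBCM\ budget is consumed by the rare correct samples and a batch points the right way only if it contains one, with probability about $\Bt\delta$ where $\delta \approx (\eps/\varsym)^{p/(p-1)}$; your oracle makes the rare sample a huge wrong-directed spike and the common sample a small correcting shift, so the budget is consumed by the spikes. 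Both saturate the moment constraint at the same batch threshold $(\varsym/\eps)^{p/(p-1)}$, and the two closing arguments (the paper's median bound for the Bernoulli sum, your gambler's-ruin maximal inequality) are interchangeable.

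One concrete pitfall in your spike design must be fixed when you write it out. With the most natural constants $\rho = 1/m$ and $a = 2m\mu$, unbiasedness forces the compensation $c = a\rho/(1-\rho) = a/(m-1)$, and then the batch noise mean conditioned on exactly one spike equals $\frac 1 m \bigl(a - (m-1)c\bigr) = 0$: a single spike is exactly cancelled and does not flip the sign, while the probability of two or more spikes is below $1/2$, so the drift points \emph{toward} the flat region and your walk argument degenerates to the $\Dz$-term. The cure stays inside your $\Theta(\cdot)$ freedom but is essential: take $\rho = \kappa/m$ with $\kappa \in (\ln 2, 1)$ and $a \gtrsim m\mu/(1-\kappa)$; then one spike shifts the batch mean by $a(1-\kappa)/(m-\kappa) > \mu$, the probability of at least one spike is at least $1 - e^{-\kappa} > 1/2$, and $\rho a^p \lesssim \varsym^p$ still yields the threshold $m \lesssim (\varsym/\eps)^{p/(p-1)}$. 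Two further details the paper handles explicitly and you should too: use ramp slope $3\eps$ rather than $2\eps$, so that ``stays on the ramp with probability at least $1/2$'' gives $\Exp{\norm{\nabla F(x_\ii)}} \geq \tfrac 3 2 \eps > \eps$ strictly; and rule out very large step sizes $\ssi\li^{-r} \gtrsim \eps/\sm$ via the deterministic instance first (as the paper does), so that the ramp is always several steps long and the maximal inequality has room to operate.
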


\begin{figure}[t]
	\centering
	\begin{tikzpicture}
	
	\pgfmathsetmacro{\eps}{0.25}        
	\pgfmathsetmacro{\DeltaOne}{2}      
	\pgfmathsetmacro{\L}{1.2}             
	\pgfmathsetmacro{\ssi}{1}           
	\pgfmathsetmacro{\T}{4}           	
	%
	\pgfmathsetmacro{\deltaF}{2*\eps*\ssi - \L*\ssi*\ssi/4}
	%
	\pgfmathsetmacro{\xmin}{-0.2} 
	\pgfmathsetmacro{\ybreaktop}{\DeltaOne - \T * \deltaF}    
	\pgfmathsetmacro{\ymin}{-2.2*\eps}
	\pgfmathsetmacro{\ymax}{\DeltaOne + (\DeltaOne - \ybreaktop)*0.1}
	\pgfmathsetmacro{\ybreakbot}{0.9}     
	
	\pgfplotsset{
		anchor=origin,
		xmin=\xmin, xmax=(\T * \ssi + 0.5),
	}
	
	\begin{axis}[
		name=upper,
		axis x line=middle,
		xlabel={$x$},
		xtick={0, \ssi, 2*\ssi, 3*\ssi},
		xticklabel style={yshift=5mm,xshift=3mm},
		xticklabels={$x_1$, $x_2$, $x_3$, $x_4$},
		axis y line*=left,
		y axis line style={draw=none},
		ymin=\ymin, ymax=\ymax,
		ytick={\DeltaOne,\DeltaOne-\deltaF,\DeltaOne-2*\deltaF,\DeltaOne-3*\deltaF, 2*\eps, 0, -2*\eps},
		yticklabels={$\Dz$, $F(x_2)$, $F(x_3)$, $F(x_4)$, $2\varepsilon$, $0$, $-2\varepsilon$},
		legend style={at={(0.95,0.95)},anchor=north east},
		clip=false,
		legend entries={$F(x)$,$F'(x)$},
		]
		\addlegendimage{blue, very thick}
		\addlegendimage{red, dashed, very thick}
		\draw (\xmin,-2.5*\eps) -- (\xmin,2.5*\eps); 
		\draw[dotted] (\xmin,2.5*\eps) -- (\xmin,\ybreaktop); 
		\draw[-stealth] (\xmin,\ybreaktop) -- (\xmin,\ymax);
 		\pgfplotsinvokeforeach{0,1,...,\T-1}{
 			\pgfmathsetmacro{\domStart}{#1 * \ssi}
 			\pgfmathsetmacro{\domMid}{(#1 + 0.5) * \ssi}
 			\pgfmathsetmacro{\domEnd}{(#1 + 1) * \ssi}
 			\pgfmathsetmacro{\startVal}{\DeltaOne - (#1 * \deltaF)}
			\addplot[blue, very thick, domain=\domStart:\domMid, samples=50] 
			{ \startVal - 2*\eps*(x - \domStart) + 0.5*\L*(x - \domStart)^2 };
			\addplot[blue, very thick, domain=\domMid:\domEnd, samples=50] 
			{ (\startVal - \L*\ssi^2/4 - 0.5*\L*(x-\domStart)^2 + (\L*\ssi - 2*\eps)*(x-\domStart) };
			\addplot[red, very thick, dashed, domain=\domStart:\domMid, samples=50] 
			{ -2*\eps + \L*(x-\domStart) };
			\addplot[red, very thick, dashed, domain=\domMid:\domEnd, samples=50] 
			{ -2*\eps + \L*(\domEnd - x) };
			\edef\temp{
				\noexpand\draw[dotted, thick, opacity=0.5] (axis cs:\domStart, \startVal) -- (axis cs:\domStart, -2*\eps);
				\noexpand\draw[dotted, thick, opacity=0.5] (axis cs:\xmin, \startVal) -- (axis cs:\domStart, \startVal);
			}
			\temp
		}
	\end{axis}

\end{tikzpicture}
	\caption{Plot of the hard instance function and its derivative used in \Cref{thm:main.nsgd_lower_bound}. Dotted lines mark the iterates generated by the algorithm and their function values. $\Dz$ denotes the initial suboptimality $F(x_1)$, and $\varepsilon$ is the target accuracy.}
	\label{fig:lower_bound_function}
\end{figure}

The extended result, which includes the dependence on $\ssi$ and $B$, and its proof can be found in \Cref{sec:app.optimality.stoch}. Its proof is based on two key ideas. First, in the deterministic setting, we construct a hard function that exactly satisfies \lsmooth\ and \lb, penalizing excessively small and large step sizes within a single function, see \Cref{fig:lower_bound_function}. This yields an iteration complexity lower bound of $\Omega\pare{\pare{\nicefrac \Dz \eps}^{1 / \pare{1-r}} + \pare{\nicefrac \sm \eps}^{1 / r}}$. Second, we construct an oracle that points in the opposite direction of the true gradient with maximal probability, while adhering to the \pBCM\ assumption. This oracle leads to a lower bound on the required batchsize, which in turn implies an iteration complexity lower bound of $\Omega\pare{\pare{\nicefrac \varsym \eps}^{\frac{p}{q\pare{p-1}}}}$. Combining these iteration complexity lower bounds with the samples per iteration results in \Cref{thm:main.nsgd_lower_bound}.

\paragraph{Parameter-free convergence.}
When considering the parameters $r = \nicefrac 1 2$ and $q = 1$, \Cref{prop:main.param_agnostic_general} implies the sample complexity  
\begin{align}\label{eq:main.optimal_param_free_result}
	\Oc\pare{\frac{\Dz^4 + \sm^4}{\eps^4} + \pare{\frac \varsym \eps}^{\frac{2p}{p-1}}},
\end{align}
without requiring knowledge of any problem-dependent parameters, including the tail index $p$. It turns out, that this choice of step-size and batch-size parameters is the best parameter-free choice possible, in the sense that \eqref{eq:main.optimal_param_free_result} cannot be \emph{uniformly} improved for all $p \in (1,2]$. More precisely, \eqref{eq:main.optimal_param_free_result} is tight for all $p \in (1,2]$, as can be seen by plugging $r = \nicefrac 1 2$ and $q = 1$ into \Cref{thm:main.nsgd_lower_bound}. Furthermore, while a different choice of $r$ and $q$ may improve the sample complexity for \emph{some} $p$, the complexity would get strictly worse for $p = 2$. That is any other choice of $(r,q) \neq (\nicefrac 1 2, 1)$ implies a sample complexity lower bound of $\omega\pare{\eps^{-4}}$, which is strictly worse than the $\Oc\pare{\eps^{-4}}$ we get from \eqref{eq:main.optimal_param_free_result} for $p=2$.

\paragraph{Optimal sample complexity with tuning.}
For algorithms with knowledge of problem parameters, \citet{zhang2020adaptive} provide a sample complexity lower bound for our setting of
\begin{align}\label{eq:pbcm_lower_bound}
	\Omega\pare{\frac{\Dz \sm}{\eps^2} + \frac{\Dz \sm}{\eps^2} \pare{\frac{\varsym}{\eps}}^{\frac p {p-1}}}.
\end{align}
To the best of our knowledge, there are no upper bounds exactly matching this lower bound, leaving the tightness of \eqref{eq:pbcm_lower_bound} an open question. The following result closes this question, by improving the sample complexity of \eqref{eq:main.optimal_param_free_result} --- when given access to problem-parameters --- to tightly match the lower bound in all parameters.

\begin{corollary}[Optimal Sample Complexity]\label{cor:main.optimal_complexity}
	Assume \lb, \lsmooth\ and \pBCM\ with $p \in (1, 2]$. Then the iterates generated by \batchNsgd\ with parameters $\sst \equiv \sqrt{\nicefrac{\Dz}{\sm \li}}$ and $\Bt \equiv \ceil*{\max\set{1,\pare{\frac{\varsym^2 \li}{\Dz \sm}}^{\frac{p}{2p-2}}}}$ satisfy
	\begin{align*}
		\frac 1 \li \iSum \Exp{\nnFxt} \leq 6 \frac{\sqrt{\Dz \sm}}{\sqrt\li}.
	\end{align*}
	In particular the sample complexity is bounded by $\Oc \pare{\frac{\Dz \sm}{\eps^2} + \frac{\Dz \sm}{\eps^2}\pare{\frac \varsym \eps}^{\frac p {p-1}}}$.
\end{corollary}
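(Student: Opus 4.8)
The plan is to obtain the statement as a direct specialization of \Cref{prop:main.param_agnostic_general}, since the prescribed parameters are exactly of the form covered there. The step-size $\sst \equiv \sqrt{\nicefrac{\Dz}{\sm\li}}$ matches $\ssi\li^{-r}$ with $\ssi = \sqrt{\nicefrac{\Dz}{\sm}}$ and $r = \nicefrac12$, and the batch-size $\Bt \equiv \ceil{\max\set{1, \pare{\nicefrac{\varsym^2\li}{\Dz\sm}}^{\nicefrac{p}{2p-2}}}}$ matches $\ceil{\max\set{1, B\li^q}}$ with $q = \nicefrac{p}{2(p-1)}$ and $B = \pare{\nicefrac{\varsym^2}{\Dz\sm}}^{q}$. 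First I would check that these identifications satisfy the hypotheses of the proposition ($q\geq 0$, $\ssi, B > 0$, $r\in(0,1)$), so that its bound applies. Because the true batch-size is the ceiling of $\max\set{1, B\li^q}$, it only exceeds $\max\set{1, B\li^q}$, which can only shrink the noise term; hence plugging the un-ceiled quantity into the proposition's bound is legitimate.

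The heart of the computation is a term-by-term evaluation of the resulting right-hand side. The two optimization terms collapse to $\frac{\Dz}{\ssi\li^{1/2}} = \frac{\sqrt{\Dz\sm}}{\sqrt\li}$ and $\frac{\ssi\sm}{2\li^{1/2}} = \frac{\sqrt{\Dz\sm}}{2\sqrt\li}$. The exponent $q$ is chosen precisely so that the noise term scales identically: since $q\cdot\frac{p-1}{p} = \nicefrac12$ and $B^{\frac{p-1}{p}} = \pare{\nicefrac{\varsym^2}{\Dz\sm}}^{1/2} = \nicefrac{\varsym}{\sqrt{\Dz\sm}}$, whenever $B\li^q\geq 1$ the third term equals $\frac{4\varsym}{\pare{B\li^q}^{(p-1)/p}} = \frac{4\sqrt{\Dz\sm}}{\sqrt\li}$. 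I would treat the complementary regime $B\li^q<1$ separately: there the term is $4\varsym$, but $B\li^q<1$ is equivalent to $\varsym^2\li<\Dz\sm$, i.e.\ $\varsym<\sqrt{\nicefrac{\Dz\sm}{\li}}$, so the term is again at most $\frac{4\sqrt{\Dz\sm}}{\sqrt\li}$. Adding the three contributions gives $\pare{1+\nicefrac12+4}\frac{\sqrt{\Dz\sm}}{\sqrt\li}\leq 6\frac{\sqrt{\Dz\sm}}{\sqrt\li}$, which is the claimed rate.

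For the sample complexity, I would force the rate below $\eps$: the inequality $6\sqrt{\nicefrac{\Dz\sm}{\li}}\leq\eps$ holds as soon as $\li = \Oc\pare{\nicefrac{\Dz\sm}{\eps^2}}$. The total cost is $\sum_{t}\Bt = \li\,\Bt$, and evaluating the batch-size at this horizon gives $\max\set{1,\pare{\nicefrac{\varsym^2}{\eps^2}}^{q}} = \max\set{1,\pare{\nicefrac{\varsym}{\eps}}^{\nicefrac{p}{p-1}}}$, with the ceiling costing at most a factor $2$ (as $\ceil x\leq 2x$ for $x\geq 1$). Multiplying by $\li$ then yields the stated $\Oc\pare{\frac{\Dz\sm}{\eps^2} + \frac{\Dz\sm}{\eps^2}\pare{\nicefrac\varsym\eps}^{\nicefrac{p}{p-1}}}$.

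As this is a corollary of an already-proved proposition, there is no genuine obstacle; the only points demanding care are the case split for the noise term and the bookkeeping around the ceiling, together with verifying the exponent identity $q\cdot\frac{p-1}{p}=\nicefrac12$ --- which is exactly the relation that singles out the optimal choice of $q$ and, in turn, the constant $B$ that balances the noise against the optimization error.
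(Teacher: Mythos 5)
Your proposal is correct and follows essentially the same route as the paper's proof: both specialize \Cref{prop:main.param_agnostic_general} with $\ssi = \sqrt{\nicefrac{\Dz}{\sm}}$, $r = \nicefrac12$, $q = \frac{p}{2(p-1)}$, $B = \pare{\nicefrac{\varsym^2}{\Dz\sm}}^{q}$, handle the noise term by the same case split (your condition $B\li^q < 1$ is exactly the paper's Case 1, $\nicefrac{\varsym^2\li}{\Dz\sm} \leq 1$), and then convert the $\Oc\pare{\Dz\sm\eps^{-2}}$ iteration complexity into the sample complexity by multiplying by the per-iteration batch size evaluated at that horizon. Your constant bookkeeping ($1 + \nicefrac12 + 4 \leq 6$) and the exponent identity $q\cdot\frac{p-1}{p} = \nicefrac12$ match the paper's computation.
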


For comparison, \citet{zhang2020adaptive} derived in-expectation convergence for \clippedName with a sample complexity of\footnote{We ignore non-leading terms and simplify the rate in their favour.}
\begin{align*}
	\Oc\pare{
		\frac{\Dz \sm \varsym^{\frac{p^2}{p-1}}}{\eps^2} 
		+ \frac{\pare{\Dz \sm \varsym^p}^{\frac{3p-2}{2p-2}} + \varsym^{\frac{3p-2}{p-1}} }{\eps^\frac{3p-2}{p-1}}
	}
\end{align*}
which is suboptimal in all parameters besides $\eps$.


\subsection{Convergence with High-Probability}
\label{sec:main.hp}

While in-expectation results guarantee small gradient norms given sufficiently many optimization runs, computational constraints often preclude running enough procedures. Therefore, results of the form \emph{with probability at least $1-\delta$, a single optimization run achieves a certain gradient norm}, often called in-probability results, are more desirable. While the Markov inequality can convert in-expectation guarantees to in-probability guarantees, the poor polynomial dependence on $\nicefrac 1 \delta$ renders these results impractical.

Therefore, the gold standard are so called high-probability results with a mild $\polylog\pare{\nicefrac 1 {\delta}}$ dependence. To achieve such results, existing literature relies on either light tail noise assumptions (e.g., \citet{ghadimi2013stochastic,hpSGDM2020Li,subWeibull_HP2020Madden,pmlr-v162-li22q,fatkhullin2024taming}), or the gradient clipping mechanism (e.g., \citet{cutkosky2021high,sadiev2023high,nguyen2023improved}). In contrast, \citet{sadiev2023high} prove that vanilla \sgdName (without clipping) cannot achieve a better $\delta$ dependence than $\Omega\pare{\nicefrac 1 {\sqrt \delta}}$ under heavy-tailed noise.

The following Theorem provides a unified high-probability guarantee for \nsgd\ with a general gradient estimator. The result will imply high-probability convergence for \batchNsgd, and high-probability convergence to a $\varsym$-neighbourhood for vanilla \nsgd.
\newline

\newcommand{\ssmax}{\iterationss \li^{\max}}
\begin{theorem}[High-Probability]\label{thm:main.unified_hp}
	Let $\delta \in (0,1)$. Assume \lb, \lsmooth\ and $\infty > \sigt \coloneqq \Exp[\Fctm]{\norm{\gt - \nFxt}}$, where $\Fctm \coloneqq \sigma\pare{\iterationg \fin, \dots, \gtm}$. Additionally let $\ssmax\coloneqq \max_{\ii \in [\li]}\sst$ and $C_\li \coloneqq \max_{\ii \in [\li]} \sst \sum_{\tau=1}^{\ii-1} \sstau$. Then, with probability at least $1-\delta$, the iterates generated by \nsgd\ satisfy
	\begin{align*}
		\iSum w_t \norm{\nabla_\ii}
		\leq &\ 
		\frac{2\Dz + \sm \iSum \sst^2 + 4 \iSum \sst \sigt}{\tauSum \sstau} \\
		&\ + \frac{12\pare{\ssmax\norm{\nF \pare \xfin} + C_\li \sm} \logood} {\tauSum \sstau},
	\end{align*}
	where $w_t \coloneqq \frac{\sst}{\tauSum \sstau}$ and $\nabla_\ii \coloneqq \nFxt$.
\end{theorem}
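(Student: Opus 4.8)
The plan is to run the standard descent analysis for normalized methods, but to route all the randomness through a single \emph{bounded} martingale whose concentration survives the minimal first-moment noise assumption. First I would apply the \lsmooth\ descent inequality to the update $\xtp = \xt - \sst \gt / \norm{\gt}$, giving
\begin{equation*}
	F(\xtp) \leq F(\xt) - \sst \left\langle \nFxt, \tfrac{\gt}{\norm{\gt}} \right\rangle + \tfrac{\sm}{2}\sst^2 .
\end{equation*}
The engine of the normalized analysis is the elementary correlation bound $\left\langle \nFxt, \gt/\norm{\gt}\right\rangle \geq \norm{\nFxt} - 2\norm{\gt - \nFxt}$, which follows from $\norm{a/\norm{a} - b/\norm{b}} \leq 2\norm{a-b}/\norm{a}$. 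Taking the conditional expectation $\Exp[\Fctm]{\cdot}$ turns the noise into $\sigt$, so that $\Exp[\Fctm]{\left\langle \nFxt, \gt/\norm{\gt}\right\rangle} \geq \norm{\nFxt} - 2\sigt$, which is the only place the \pBCM-type moment enters.

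Next I would split the correlation into its conditional mean plus a martingale difference $Z_\iterationIndex \coloneqq \sst\big(\left\langle \nFxt, \gt/\norm{\gt}\right\rangle - \Exp[\Fctm]{\left\langle \nFxt, \gt/\norm{\gt}\right\rangle}\big)$, substitute the correlation lower bound, and telescope $F(\xfin) - F(x_{\li+1}) \leq \Dz$ over $\iterationIndex = \fin,\dots,\li$ to reach
\begin{equation*}
	\tauSum \sstau \norm{\nabla_\tau} \leq \Dz + 2\tauSum \sstau \sigma_\tau + \tfrac{\sm}{2}\tauSum \sstau^2 - \tauSum Z_\tau .
\end{equation*}
Everything then reduces to an upper bound on $-\tauSum Z_\tau$ that holds with probability $1-\delta$. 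The decisive structural observation is that normalization makes this martingale \emph{bounded}: since $\gt/\norm{\gt}$ is a unit vector, $\lvert\left\langle \nFxt, \gt/\norm{\gt}\right\rangle\rvert \leq \norm{\nFxt}$, so $\lvert Z_\iterationIndex\rvert \leq 2\sst\norm{\nFxt}$. Crucially, because each normalized step has length exactly $\norm{\xtp - \xt} = \sst$, \lsmooth\ telescopes to a \emph{deterministic} gradient envelope $\norm{\nFxt} \leq \norm{\nF\pare{\xfin}} + \sm\sum_{\tau=\fin}^{\iterationIndex-1}\sstau$, whence $\lvert Z_\iterationIndex\rvert \leq 2\pare{\ssmax\norm{\nF\pare{\xfin}} + C_\li \sm}$ uniformly; along the same lines the conditional second moment self-bounds as $\Exp[\Fctm]{Z_\iterationIndex^2} \leq 4\sst^2 \norm{\nFxt}\sigt$.

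Finally I would feed these two quantities into a Freedman/Bernstein-type martingale inequality. The uniform increment bound governs the large-deviation regime and produces the $12\pare{\ssmax\norm{\nF\pare{\xfin}} + C_\li \sm}\logood$ term (linear in $\logood$, with no spurious $\sqrt{\li}$), while the self-bounding variance, rewritten as (increment bound)$\times \sst\sigma_\tau$ and controlled by the deterministic envelope together with Young's inequality, is absorbed into a constant fraction $\tfrac12\tauSum\sstau\norm{\nabla_\tau}$ of the left-hand side. Moving that fraction across and rescaling by $2$ simultaneously doubles the drift terms to $2\Dz$, $\sm\tauSum\sstau^2$ and $4\tauSum\sstau\sigma_\tau$, and doubles the deviation term to the stated constant; dividing through by $\tauSum\sstau$ and setting $w_\iterationIndex = \sst/\tauSum\sstau$ yields the claim. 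The main obstacle is precisely this concentration step: with only a finite conditional \emph{first} moment $\sigt$ the raw noise $\norm{\gt-\nFxt}$ cannot be concentrated at all, so the entire argument hinges on replacing it by the bounded, predictably-controlled correlation martingale, and the delicate bookkeeping is to make the self-bounding variance absorb cleanly into the descent terms rather than spawning new $\sqrt{\cdot}$ or leftover $\sigma$-dependent contributions.
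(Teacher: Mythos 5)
Your proposal is correct in substance and reproduces the skeleton of the paper's own proof: the same descent inequality (\Cref{lem:app.descent_lemma}), the same correlation bound $\langle \nFxt, \gt/\norm{\gt}\rangle \geq \nnFxt - 2\norm{\gt-\nFxt}$ (\Cref{lem:app.remove_normalisation}) applied once under $\Exp[\Fctm]{\cdot}$ to produce the $\sigt$ drift, the same martingale difference (your $Z_\ii$ is exactly the paper's $D_\ii = -\sst\pare{\phit-\psit}\nnFxt$ up to sign), and --- decisively --- the same two observations that make heavy tails harmless: the increments are bounded because of normalization, and the bound is \emph{deterministic} because $\norm{\xtp-\xt}=\sst$ yields the envelope $\nnFxt\le\norm{\nF(\xfin)}+\sm\sum_{\tau<\ii}\sstau$. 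The only genuine divergence is the concentration step. The paper feeds the bounded increments into a sub-Gaussian MDS inequality (\Cref{lem:app.technical.conc_ineq_MDS}) with the crude predictable proxy $G_\ii^2=4\sst^2\nnFxt^2$, chooses $\lambda = 1/\pare{6\pare{\max_{\ii\in[\li]}\sst \norm{\nF(\xfin)}+C_\li\sm}}$, and absorbs $\tfrac34\lambda\sum_\ii G_\ii^2$ into $\tfrac12\sum_\ii\sst\nnFxt$ term by term; this works precisely because the proxy is proportional to $\sst^2\nnFxt^2$, so one factor $\sst\nnFxt$ is killed by the envelope and the surviving factor matches the left-hand side --- that is the source of the $\tfrac12$, the rescaling by two, and the constant $12$. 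You instead invoke Freedman with the sharper self-bounding variance $\Exp[\Fctm]{Z_\ii^2}\le 4\sst^2\nnFxt\sigt$, which is indeed valid: writing $W_\ii := \nnFxt - \langle\nFxt,\gt/\norm{\gt}\rangle \in [0,2\nnFxt]$ with $W_\ii\le 2\norm{\gt-\nFxt}$ gives $\Exp[\Fctm]{W_\ii^2}\le 4\nnFxt\sigt$. (Note that a Bernstein-type inequality is genuinely needed here; the paper's \Cref{lem:app.technical.conc_ineq_MDS} cannot be used with this $G_\ii$, since $Z_\ii^2/G_\ii^2$ is unbounded when $\sigt < \nnFxt$, and the variance being random forces the self-normalized form.)

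One caution on your bookkeeping. With the Bernstein variance, absorbing $\lambda\sum_\ii\Exp[\Fctm]{Z_\ii^2}$ into $\tfrac12\sum_\ii\sst\nnFxt$ of the left-hand side --- as you literally describe --- requires $\lambda\le 1/\pare{8\sst\sigt}$ for every $\ii$, which drags $\sigma$ into the deviation term: $\logood/\lambda$ then contains a $\max_{\ii}\pare{\sst\sigt}\logood$ cross term that is not dominated by either $4\sum_\ii\sst\sigt$ or $12\pare{\cdot}\logood$ in general, so the stated ($\sigma$-free) constant would not come out. The repair is the other device you name in the same sentence: coarsen $\Exp[\Fctm]{Z_\ii^2}\le \pare{2\sst\nnFxt}\cdot 2\sst\sigt$, bound $2\sst\nnFxt$ by the envelope (the increment bound), take $\lambda$ of order one over that increment bound, and fold $\lambda V$ into the $4\sum_\ii\sst\sigt$ drift term rather than into the left-hand side; then the deviation stays $\sigma$-free, no factor-of-two rescaling is needed at all, and every constant lands inside the theorem's. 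Net effect: your refinement via the conditional second moment is correct but must be re-coarsened through the increment bound precisely so that $\sigma$ stays out of the $\logood$ term --- the paper's pure-boundedness proxy sidesteps this issue, which is why its version of the argument is shorter.
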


The main idea behind the proof is to reduce the statement to lower bounding the expected cosine between $g_t$ and $\nFxt$. Since the cosine is bounded within $[-1, 1]$, we can apply a high-probability concentration inequality on it and obtain the mild $\logood$ dependence. We would like to point out that our proof technique for establishing this high probability result significantly deviates from the existing high probability analysis of methods using gradient clipping. The formal proof can be found in \Cref{sec:app.missing.hp}.

Note that \Cref{thm:main.unified_hp} does not make any unbiasedness or decreasing stepsize assumptions. Furthermore, when comparing this result with its in-expectation counterpart (see \Cref{prop:app.missing.exp.unified_expectation}), the bound can be interpreted as a concentration inequality around the expected value. 
Crucially, compared to \citep{cutkosky2021high}, our algorithm does not require the additional clipping mechanism, effectively reducing the need to tune an additional parameter and aligning theory with practice. 

We next apply \Cref{thm:main.unified_hp} to \batchNsgd\ with optimal parameters. A parameter-free version can be found in \Cref{sec:app.missing.hp.param_free}. To the best of our knowledge, we are the first work to show a high-probability result without requiring strong noise assumptions or clipping.\footnote{Note that some works, e.g., \citep{NonlinearHP2023Armacki}, provide high-probability guarantees for \nsgd\ under different noise assumptions. Specifically, it is important for their technique to assume the existence of a symmetric PDF. }

\begin{corollary}\label{cor:main.optimal_hp}
	Assume \lb, \lsmooth\ and \pBCM\ with $p \in (1,2]$. Then the iterates generated by \batchNsgd\ with parameters $\sst \equiv \sqrt{\frac{\Dz}{\sm\li}}$ and $B_t \equiv \ceil*{\max\set{1, \pare{\frac{\varsym^2\li}{\Dz \sm}}^{\frac{p}{2p-2}} }}$ satisfy
	\begin{align*}
		\frac 1 \li \iSum \nnFxt
		&\leq \pare{11 + 30\logood}\frac{\sqrt{\Dz \sm}}{\sqrt\li}
	\end{align*}
	with probability at least $1-\delta$. This corresponds to a $\Oct \pare{\frac{\Dz \sm}{\eps^2} + \frac{\Dz \sm}{\eps^2} \pare{\frac{\varsym}{\eps}}^{\frac{p}{p-1}}}$ sample complexity.
\end{corollary}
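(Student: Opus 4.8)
The plan is to obtain this corollary as a direct specialization of the general high-probability bound in \Cref{thm:main.unified_hp}, since the hard analytic work (the concentration on the cosine between $\gt$ and $\nFxt$) is already packaged there. First I would check the hypotheses: \lb\ and \lsmooth\ hold by assumption, and the finiteness of $\sigt \coloneqq \Exp[\Fctm]{\norm{\gt - \nFxt}}$ for the mini-batch estimator \eqref{eq:batch_oracle} follows from \pBCM. Because the stepsize is constant, $\sst \equiv \eta \coloneqq \sqrt{\nicefrac{\Dz}{\sm\li}}$, the weights collapse to $w_t = \nicefrac{\sst}{\tauSum \sstau} = \nicefrac 1 \li$, so the left-hand side of \Cref{thm:main.unified_hp} becomes exactly $\frac 1 \li \iSum \nnFxt$ and the normalizing factor simplifies to $\tauSum \sstau = \li \eta = \sqrt{\nicefrac{\Dz \li}{\sm}}$. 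It then remains to show each of the six terms in the theorem reduces to a constant multiple of $\sqrt{\nicefrac{\Dz\sm}{\li}}$.

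The key quantitative step is controlling $\sigt$. I would apply the vectorized von Bahr--Esseen inequality (\Cref{lem:app.von_Bahr_and_Essen}) to the $\Bt$ conditionally i.i.d., mean-zero summands $\nabla \ora \pare{\xt, \xit^{(j)}} - \nFxt$, which gives $\Exp[\Fctm]{\norm{\gt - \nFxt}^p} \leq \nicefrac{2\varsym^p}{\Bt^{p-1}}$, and then Jensen's inequality yields $\sigt \leq \nicefrac{2\varsym}{\Bt^{\nicefrac{(p-1)}{p}}}$. The batch size is chosen precisely so that the exponent lines up: since $B \geq \pare{\nicefrac{\varsym^2 \li}{\Dz\sm}}^{\nicefrac{p}{(2p-2)}}$, we have $B^{\nicefrac{(p-1)}{p}} \geq \pare{\nicefrac{\varsym^2\li}{\Dz\sm}}^{\nicefrac 1 2}= \nicefrac{\varsym\sqrt\li}{\sqrt{\Dz\sm}}$, whence $\sigt \leq 2\sqrt{\nicefrac{\Dz\sm}{\li}}$ uniformly in $t$ and in $p \in (1,2]$.

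With this in hand the remaining terms are bookkeeping. Direct computation gives $\nicefrac{2\Dz}{\li\eta} = 2\sqrt{\nicefrac{\Dz\sm}{\li}}$ and $\nicefrac{\sm \li\eta^2}{\li\eta} = \sm\eta = \sqrt{\nicefrac{\Dz\sm}{\li}}$, while the noise contribution is $\nicefrac{4\eta\sum_t \sigt}{\li\eta} \leq 8\sqrt{\nicefrac{\Dz\sm}{\li}}$, summing to the non-logarithmic constant $2 + 1 + 8 = 11$. For the $\logood$-term I would use $\ssmax = \eta$, the crude estimate $C_\li = \max_{\ii}\sst \sum_{\tau=1}^{\ii-1}\sstau \leq \li\eta^2$, and the standard smoothness-plus-lower-boundedness bound $\norm{\nF \pare \xfin}^2 \leq 2\sm\pare{\obj(\xfin) - F^*} \leq 2\Dz\sm$; these give $\nicefrac{12\eta\norm{\nF \pare \xfin}}{\li\eta} \leq 12\sqrt 2\, \sqrt{\nicefrac{\Dz\sm}{\li}}$ (using $\li \geq \sqrt\li$) and $\nicefrac{12\li\eta^2\sm}{\li\eta} = 12\sqrt{\nicefrac{\Dz\sm}{\li}}$, so the $\logood$-coefficient is at most $12\sqrt 2 + 12 \leq 30$. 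Combining yields the claimed $\pare{11 + 30\logood}\sqrt{\nicefrac{\Dz\sm}{\li}}$.

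Finally, for the sample complexity I would count the $\li B$ total oracle calls. Forcing the right-hand side below $\eps$ requires $\li = \Oct\pare{\nicefrac{\Dz\sm}{\eps^2}}$, where the $\logood$ enters only through a squared polylogarithmic factor absorbed into $\Oct$; substituting this value of $\li$ into $B$ turns $\pare{\nicefrac{\varsym^2\li}{\Dz\sm}}^{\nicefrac{p}{(2p-2)}}$ into $\Oct\pare{\pare{\nicefrac\varsym\eps}^{\nicefrac{p}{(p-1)}}}$, so $\li B = \Oct\pare{\frac{\Dz \sm}{\eps^2} + \frac{\Dz \sm}{\eps^2}\pare{\frac{\varsym}{\eps}}^{\frac{p}{p-1}}}$. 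I expect the main obstacle to be the noise-control step: obtaining the mini-batch moment bound via von Bahr--Esseen with a clean constant and verifying that the exponent $\nicefrac{p}{(2p-2)}$ makes $\sigt$ exactly of order $\sqrt{\nicefrac{\Dz\sm}{\li}}$ for every $p \in (1,2]$; once that holds, everything else is a matter of tracking constants on top of \Cref{thm:main.unified_hp}.
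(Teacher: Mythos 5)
Your proposal is correct and takes essentially the same route as the paper's proof: specialize \Cref{thm:main.unified_hp} to the constant parameters, bound $\sigt \leq 2\varsym \Bt^{-\frac{p-1}{p}}$ via the von Bahr--Esseen inequality (\Cref{lem:app.von_Bahr_and_Essen}) combined with Jensen's inequality, use $\norm{\nF\pare\xfin} \leq \sqrt{2\Dz\sm}$, and track constants to arrive at $\pare{11 + 30\logood}\sqrt{\nicefrac{\Dz\sm}{\li}}$ and the stated sample complexity. The only difference is cosmetic: where the paper splits into the two cases $\Bt = 1$ and $\Bt = \pare{\nicefrac{\varsym^2\li}{\Dz\sm}}^{\frac{p}{2p-2}}$, you handle both at once via the observation $\Bt^{\frac{p-1}{p}} \geq \pare{\nicefrac{\varsym^2\li}{\Dz\sm}}^{\nicefrac{1}{2}}$, which yields the same bound $\sigt \leq 2\sqrt{\nicefrac{\Dz\sm}{\li}}$.
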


This result is optimal in $\Delta_1, \sm, \varsym$ and $\eps$. We are not aware of any lower bounds specifying the optimal $\delta$-dependence. For comparison, \citet{nguyen2023improved} derived high-probability convergence of \clippedName with a sample complexity of
\begin{align*}
	\Oct\pare{
		\frac{\pare{\Dz \sm}^{\frac{3p-2}{4p-4}}}{\eps^{\frac{6p-4}{2p-1}}}
		+ \frac{
			\pare{\frac{\varsym^{2p}}{\pare{\Dz \sm}^{2-p}}}^{\frac{3p-2}{p-1}} 
			+ \pare{\Dz \sm \varsym^2}^{\frac{3p-2}{4p-4}}
		}{\eps^{\frac{3p-2}{p-1}}}
	},
\end{align*}
which is suboptimal in all parameters besides $\eps$ in the stochastic case. In the deterministic case, even the dependence on $\eps$ appears suboptimal. In contrast, our result is noise adaptive in the sense that, for $\varsym = 0$, the optimal deterministic iteration complexity is obtained. In particular, this result closes open questions posed by \citet{UnboundedClippedNSGDM2023Liu}.

While \Cref{thm:main.unified_hp} can be applied to show high-probability convergence of vanilla \nsgd\ to a $\varsym$-neighbourhood, technical difficulties prevent us from extending it to \nsgd\ with momentum. We investigate this empirically in \Cref{sec:experiments}, and describe the technical challenges in \Cref{sec:app.nsgdm.difficulties}.

\subsection{Can we Improve the Convergence Measure of Normalized \texorpdfstring{\sgdName}{SGD}?}
\label{sec:main.improve_measure}

One may observe that the convergence of \nsgd\ above is stated in terms of the average gradient norm, which is different from the average of \textit{squared} gradient norm that is commonly used in non-convex optimization. By Jensen's inequality it is straightforward to see that 
\begin{equation}\label{eq:jensen_L1_L2}
\sqrt{ \frac{1}{T}\sum_{t=1}^T \sqnorm{\nabla F( x_t)} } \geq \frac{1}{T}\sum_{t=1}^T \norm{\nabla F( x_t)} ,
\end{equation}
This raises a natural question whether this different convergence measure is a limitation of our analysis or an intrinsic property of the algorithm. The following result shows that this is indeed an intrinsic property of the algorithm, by providing a lower bound on the second moment of the gradient norm.

\begin{theorem}\label{thm:main.lb}
	There exists an $L$-smooth function $F\colon \R \rightarrow \R$ such that if \batchNsgd\ with parameters as in \Cref{cor:main.optimal_complexity} is run from any initial point $\xfin > 0 $ for $T \geq 18$ iterations, then we have 
	\begin{align*}
		\sqrt{ \frac{1}{\li}\iSum \sqnorm{\nabla F( x_t)} } 
        &\geq 
		\frac 2 3 \frac{ \sqrt{ \Dz \sm}}{\sqrt \li } \cdot \li^{1/4} ,
	    \quad \text{while} \\
		\frac{1}{T}\sum_{t=1}^T \norm{\nabla F( x_t)}  
        &\leq 6  \frac{\sqrt{\Dz \sm} }{  \sqrt \li }  \quad \text{(by \Cref{cor:main.optimal_complexity}).}
	\end{align*}
\end{theorem}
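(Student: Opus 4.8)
The mechanism I want to exploit is that $\sm$-smoothness prevents the gradient along the trajectory from collapsing too quickly: consecutive iterates of \nsgd\ are a step $\eta$ apart, so $\norm{\nabla F(\cdot)}$ can change by at most $\Delta:=\sm\eta$ per step. Hence a gradient of initial size $a\approx\sqrt{\Dz\sm}$ needs $\approx a/\Delta=\Theta(\sqrt\li)$ steps to decay to zero, and this forced ramp concentrates the squared-gradient mass, even though the descent lemma caps the sum of gradient \emph{norms} at $O(\sqrt{\Dz\sm\li})$. The cleanest realization is the quadratic $F(x)=\tfrac{\sm}{2}x^2$, which is $\sm$-smooth with minimizer $x^\ast=0$ and $F^\ast=0$, together with the exact oracle $\nabla F(x)=\sm x$ (this satisfies \pBCM\ with $\varsym=0$, so the batch size in \Cref{cor:main.optimal_complexity} is immaterial and \batchNsgd\ collapses to deterministic normalized gradient descent). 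For any $x_1>0$ I set $\Dz:=F(x_1)-F^\ast=\tfrac{\sm}{2}x_1^2$, so that the corollary's parameters are well defined and the update is
\[
x_{t+1}=x_t-\eta\,\operatorname{sign}(x_t),\qquad \eta:=\sqrt{\Dz/(\sm\li)} .
\]
The key identity is $x_1=\sqrt{2\li}\,\eta$ \emph{independently of} $x_1$, which is what lets one fixed function serve every initialization.

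I would then read off the run. As $\operatorname{sign}(\nabla F(x_t))=\operatorname{sign}(x_t)$, the iterate decreases by $\eta$ per step while positive, $x_t=x_1-(t-1)\eta$, giving
\[
G_t:=\norm{\nabla F(x_t)}=\sm x_t=a-(t-1)\Delta,\qquad a:=\sm x_1=\sqrt{2\Dz\sm},\quad \Delta:=\sm\eta=\sqrt{\Dz\sm/\li}.
\]
With $m:=a/\Delta=\sqrt{2\li}$ these remain positive with $G_t=\Delta(m-t+1)$ for $t=1,\dots,\lfloor m\rfloor$; once $x_t$ enters $(-\eta,\eta)$ the iterate only oscillates about the origin with $\norm{\nabla F(x_t)}<\Delta$, contributing just an $O(\Dz\sm)$ term to the sum of squares, which I discard.

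The lower bound reduces to a short estimate,
\[
\iSum\sqnorm{\nabla F(x_t)}\ \geq\ \sum_{t=1}^{\lfloor m\rfloor}G_t^2=\Delta^2\sum_{t=1}^{\lfloor m\rfloor}(m-t+1)^2\ \geq\ \Delta^2\,\frac{\lfloor m\rfloor^3}{3},
\]
after which $\li\geq 18\Rightarrow m=\sqrt{2\li}\geq 6\Rightarrow\lfloor m\rfloor\geq\tfrac56\sqrt{2\li}$ and $\Delta^2=\Dz\sm/\li$ give $\tfrac1\li\iSum\sqnorm{\nabla F(x_t)}\geq c\,\Dz\sm/\sqrt\li$ with an explicit $c>\tfrac49$; taking square roots yields $\tfrac23\sqrt{\Dz\sm}\,\li^{-1/4}$. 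The companion upper bound on $\tfrac1\li\iSum\norm{\nabla F(x_t)}$ is just \Cref{cor:main.optimal_complexity} applied to this instance.

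The one genuinely delicate requirement is that the statement hold for \emph{every} $x_1>0$ with a single function. The quadratic supplies this through exact scale-invariance, which fixes the descent length at $\lfloor\sqrt{2\li}\rfloor$ regardless of $x_1$, while placing the minimizer at $x^\ast=0$ ensures $x_1>0$ never starts at a zero-gradient point; a shifted parabola or a truncated ramp would reintroduce a degenerate start or fail to produce the smoothness-forced $\Theta(\sqrt\li)$ ramp. The remaining steps --- the descent recursion, matching the floor $\lfloor\sqrt{2\li}\rfloor$ to the constant $\tfrac23$ via the threshold $\li\geq 18$, and discarding the oscillation phase --- are elementary.
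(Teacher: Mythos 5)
Your proposal is correct and takes essentially the same route as the paper's proof: the paper likewise uses the noiseless quadratic ($F(x)=\tfrac12 x^2$, so that \pBCM\ holds with $\varsym=0$ and the batch size is $1$), observes that the \nsgd\ iterates decrease linearly by $\eta$ per step for roughly $\sqrt{2\li}$ iterations thanks to the scale-invariance $x_1=\sqrt{2\li}\,\eta$, and lower-bounds the sum of squared gradients along this ramp by $\Theta\pare{\Dz\sm\sqrt\li}$ before dividing by $\li$ and taking square roots. Your bookkeeping (floors and the comparison $\sum_{t}(m-t+1)^2\geq\lfloor m\rfloor^3/3$, stating the general $\sm$-scaled quadratic rather than $\sm=1$, and explicitly discarding the oscillation phase) is marginally cleaner but not a different argument.
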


The above result implies that even if we select the optimal step-size parameter (to minimize the upper bound on $\frac{1}{T}\sum_{t=1}^T \norm{\nabla F( x_t)}$), \batchNsgd\ does not achieve optimal rates in terms of the stronger measure $\sqrt{ \frac{1}{T}\sum_{t=1}^T \sqnorm{\nabla F( x_t)} } $. Specifically, the convergence rate for the latter measure is worse at least by a factor of $\Theta(T^{\nicefrac 1 4})$. Our \Cref{thm:app.optimality.deterministic_polynomial_ngd_optimiality} implies that the step-size $\sst = \sqrt{\frac{\Dz}{\sm\li}}$ is essentially the only order of step-size to attain the optimal convergence rate in terms of $\frac{1}{T}\sum_{t=1}^T \norm{\nabla F( x_t)}$ (up to a numerical constant). Combined with inequality \eqref{eq:jensen_L1_L2} and \Cref{thm:main.lb}, it implies that there is no predefined constant step-size for \nsgd\ that can guarantee optimal convergence when the rate is measured by $\sqrt{ \frac{1}{T}\sum_{t=1}^T \sqnorm{\nabla F( x_t)} } $.

\section{EXPERIMENTS}
\label{sec:experiments}
In this section, we present experiments designed to empirically motivate and validate the theoretical findings of this paper. Since heavy tails have prominently been observed in language modelling tasks \citep{zhang2020adaptive}, our experiments target this task.

\paragraph{Experimental Setup.} We conduct training on the Penn Treebank (PTB) \citep{PTB1993Marcus} and WikiText-2 \citep{Wikitext2017Merity} datasets using the AWD-LSTM architecture \citep{LSTM2018Merity}. Hyperparameters of the model and batchsizes were chosen according to \citep{LSTM2018Merity}. To observe the exact optimization behaviour of algorithms, the averaging mechanism of the model was disabled. Additional licensing and compute information can be found in \Cref{sec:app.add_experiments}.

In order to examine the behaviour of \clippedSGD and compare it to \nsgd, we tuned their respective parameters using a course grid search in a 50 epoch training. For \nsgd\ we considered the stepsizes $\sst = \ssi \ii^{-r}$ and tuned $\ssi$ and $r$. For \clippedName we considered the same stepsizes and additionally tuned the clipping threshold $\clipThresh$. The parameters resulting on the above described tuning scheme on the PTB dataset were  $\pare{\ssi, r ,\clipThresh} = \pare{50,0.1,0.25}$ for \clippedName and $\pare{\ssi, r} = (50, 0.25)$ for \nsgd. It should be noted that the observed optimal clipping threshold $\clipThresh = 0.25$ is in line with the previous empirical work by \citet{LSTM2018Merity} that introduced the AWD-LSTM. The resulting parameters on the WikiText-2 dataset were $\pare{\ssi, r ,\clipThresh} = \pare{30,0,0.15}$ for \clippedName and $\pare{\ssi, r} = (15, 0.1)$ for \nsgd.
The final training was then carried out for $300$ epochs on the seeds $0, 1970, 2000, 2024$ and $2112$.

\paragraph{Motivation and Validation.}\Cref{fig:clipped_behaviour} shows the behaviour of \clippedName and \nsgd\ with their corresponding tuned parameters on both datasets. Dashed lines represent the proportion of stochastic gradients that got clipped by \clippedName per epoch. We want to discuss two observations. First, perhaps surprisingly, we observe on both datasets that the percentage of events when gradients are clipped increases for \clippedName, contradicting theoretical insights as discussed in \Cref{subsec:drawbacks_clipoing}. Interestingly, \clippedName eventually clips at every iteration, becoming equivalent to \nsgd\ after a certain number of epochs. Second, it can be noted that both algorithms perform similarly when measured with their corresponding training loss, depicted with solid lines, despite \nsgd\ having one less parameter and hence requiring substantially less time to tune.

\begin{figure*}[!ht]
	\centering
	\hfill
	\begin{subfigure}[c]{0.45\linewidth}
		\includegraphics[width=\linewidth]{./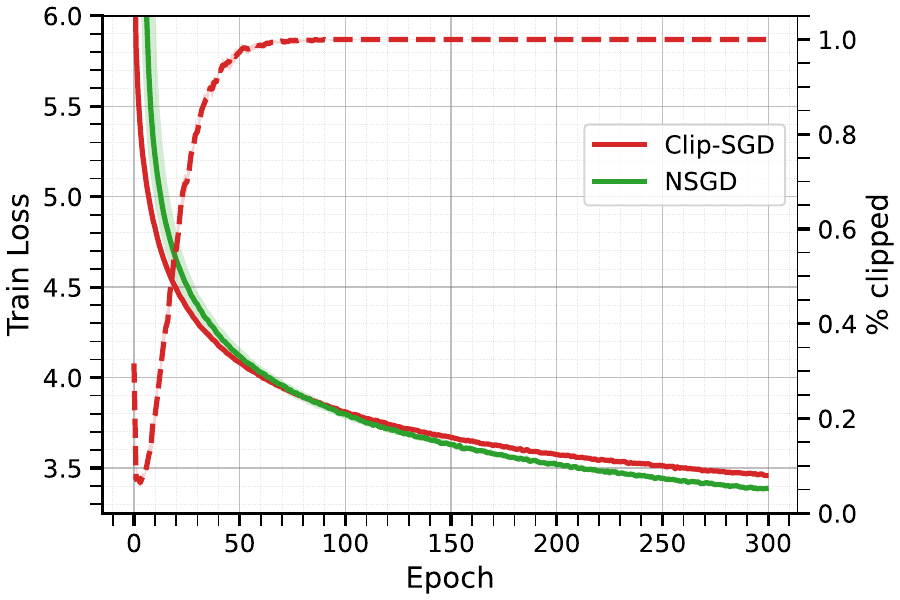}
		\caption{PTB}
		\label{fig:ptb.clipping_percentage}
	\end{subfigure}
	\hfill
	\begin{subfigure}[c]{0.45\linewidth}
		\includegraphics[width=\linewidth]{./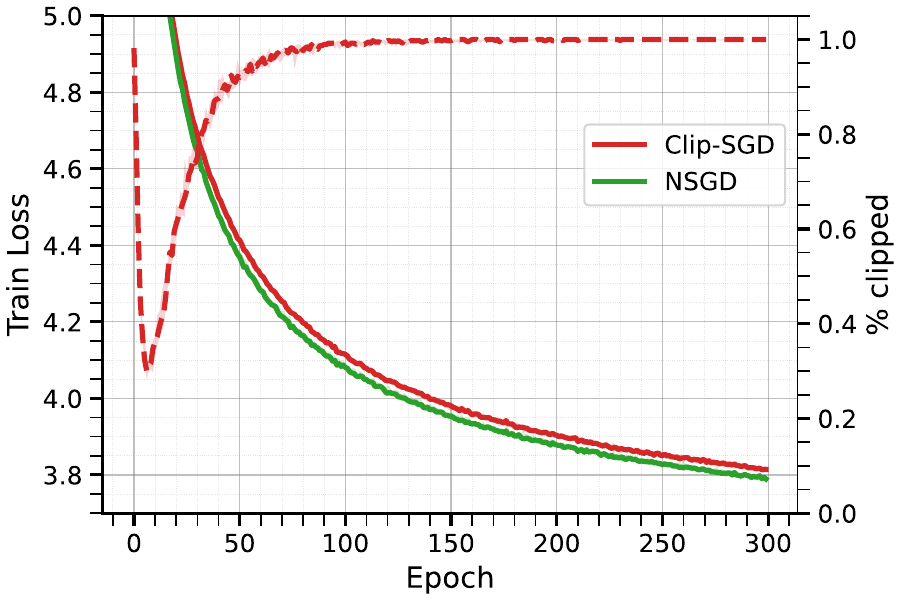}
		\caption{WikiText-2}
		\label{fig:wikitext.clipping_percentage}
	\end{subfigure}
	\hfill
	\caption{All plots consider \clippedName\ and \nsgd\ with tuned parameters. Solid lines represent the training loss and correspond to the left y-axis. The dashed line correspond to the right y-axis, and represent the percentage of clipped gradients by \clippedSGD\ in an epoch. Shaded areas represent the minimal and maximal value within 5 seeds, the line the median.}
	\label{fig:clipped_behaviour}
\end{figure*}

\begin{figure*}[!ht]
    \centering
    \includegraphics[width=0.95\linewidth]{./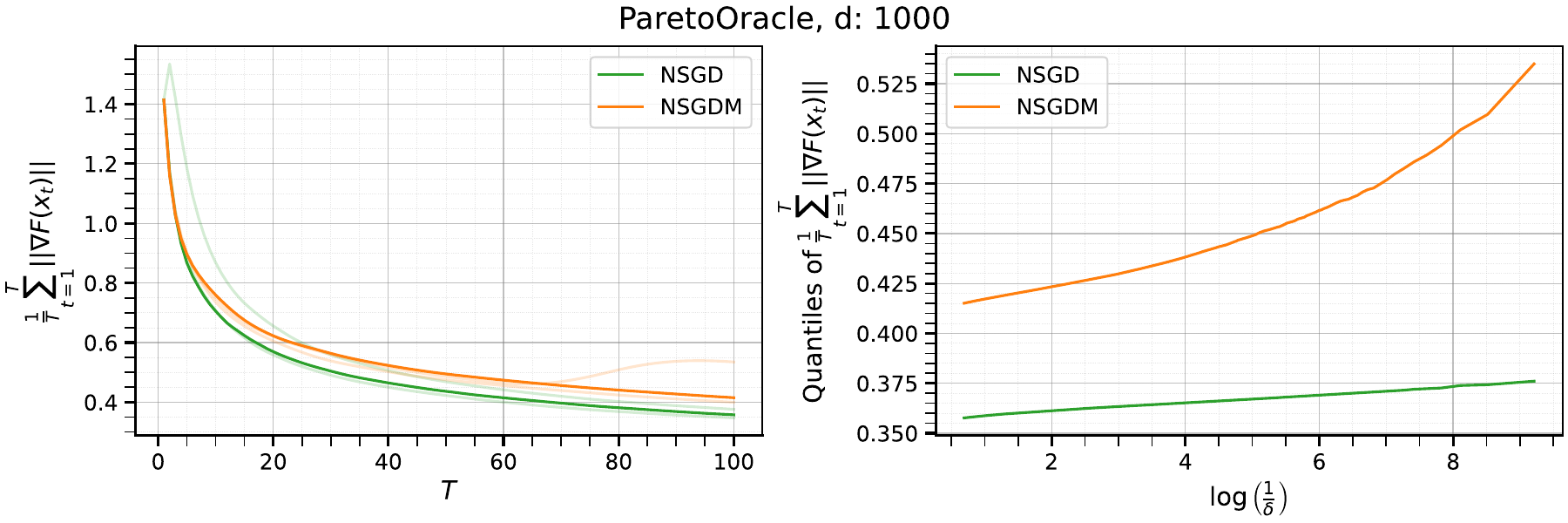}
	\caption{Verifying high probability convergence of \nsgd\ and \nsgdm. We use $f(x, \xi) = \frac{1}{2}\sqnorm{x}_2 + \langle x, \xi \rangle$, where $\xi$ is a random vector with i.i.d.\,components drawn from a symmetrized Pareto distribution with tail index $p = 2.5$. For \nsgdm, the quantiles of average gradient norm (left plot) exhibit a super-linear dependence on $\log(1/\delta)$ indicating the lack of high probability convergence.}
\label{fig:d1000k100000var1_pareto25_nsgdm}
\end{figure*}

\paragraph{High probability convergence.} In this set of experiments, we verify high probability convergence of \nsgd, and \nsgdm\ on a strongly convex quadratic function under heavy tailed noise. We run each algorithm for $k = 10^5$ times with default parameters: $\eta_t = 1/\sqrt{t}$ for \nsgd\ and $\eta_t = \sqrt{\alpha_t / t}$, $\alpha_t = 1/\sqrt{t}$ for \nsgdm\, see \eqref{eq:app.nsgdm} in \Cref{sec:app.nsgdm}. \Cref{fig:d1000k100000var1_pareto25_nsgdm} (left) visualizes the convergence behaviour by selecting the median along with $\delta$ and $1-\delta$ quantiles of the algorithm runs based on the average gradient norm at $T = 100$, where $\delta := 1-10^{-4}$. We observe that the $\delta$ quantile run of \nsgdm\ deviates significantly from the median compared to that of \nsgd\ from its own median. \Cref{fig:d1000k100000var1_pareto25_nsgdm} (right) plots the average gradient norm at $T = 100$ corresponding to different values of quantiles $\delta$. \nsgdm\ shows a super-linear dependence on $\log(1/\delta)$, indicating that it may not achieve high probability convergence as effectively as \nsgd\, even in the presence of BV noise. This suggests that extending high probability bounds to \nsgdm\, is more challenging due to momentum's impact on the dynamics. We refer to \Cref{sec:app.nsgdm.difficulties} for a more detailed discussion on theoretical challenges of showing high-probability bound for \nsgdm, and to \Cref{sec:appendix:verify_hp} for additional experiments including comparison with light tail noise and \sgd. 

\section{CONCLUSION}
\label{sec:conclusion}
This work analyzes Normalized \sgd\ under heavy-tailed noise. Our theoretical analysis reveals several interesting insights. First, we extend our understanding of high-probability convergence under heavy tailed noise, providing the first such guarantee with an algorithm that does not require gradient clipping. Second, we tightly characterize the optimal sample complexity in all parameters under the \pBCM\ assumption. Lastly, our results for parameter-free \nsgd\ suggest the robustness of the algorithm to misspecification of its parameters. Additionally, our algorithm-specific lower bounds in \Cref{thm:main.nsgd_lower_bound} and \Cref{thm:main.lb} allow additional insights into the behavior of \nsgd.

Several open questions arise from this work for future research. For instance, it remains unclear whether our high-probability result can be extended to \nsgd\ with momentum or variance-reduced gradient estimators. More importantly, it remains open whether the sample complexity that is optimal for parameter-dependent algorithms \eqref{eq:pbcm_lower_bound} can be achieved by any algorithm without knowledge of problem parameters.

\subsubsection*{Acknowledgements}
The work is supported by ETH AI Center Doctoral Fellowship and Swiss National Science Foundation (SNSF) Project Funding No. 200021-207343.

\bibliographystyle{abbrvnat}
\bibliography{./files/biblio}


\clearpage              
\onecolumn              
\appendix               
\tableofcontents        
\clearpage              
\clearpage

\section{SUMMARY OF TECHNICAL CONTRIBUTIONS}
\label{sec:app.technical_contributions}
In the following we want to summarise the main technical contributions of our work to simplify using the developed technique in other proofs. We group the contributions by topic, so other authors can focus on the area they are interested in.

\paragraph{In-Expectation Upper-Bounds under \pBCM.} With \Cref{lem:app.von_Bahr_and_Essen} in combination with \Cref{lem:app.technical.cond_exp} we provide rigorous tools to control the expected deviation of momentum and mini-batch gradient estimators from the true gradient. Examples of applications can be found in \eqref{eq:app.missing.exp.param_agnostic_general.pointwise}-\eqref{eq:app.missing.exp.param_agnostic_general.cond_exp_dev} for the mini-batch gradient estimator and in \eqref{eq:app.nsgdm.application_vBE.start}-\eqref{eq:app.nsgdm.application_vBE.end} for the momentum estimator.

\paragraph{In-Expectation Lower-Bounds.} We provide a hard function and oracle that is able to tightly characterize the parameter-dependence of \algname{NSGD}\ in \Cref{lem:app.optimality.deterministic_arbitrary_ngd_optimiality} and \Cref{thm:app.optimality.stochastic}. Very similar constructions can be used to derive a similar result for \algname{SGD} and possibly other algorithms.

\paragraph{High-Probability Upper-Bounds under \pBCM.} We lay out a different convergence analysis of \nsgd\ that hinges on controlling $\phit \coloneqq \frac{\gt^\top \nFxt}{\norm{\gt} \nnFxt}$ instead of $\norm{\gt - \nFxt}$, which is used in previous analyses. This allows to prove high-probability guarantees due to the boundedness of $\phit$. In contrast, \citet{cutkosky2021high} require an additionally gradient clipping to guarantee concentration of $\norm{\gt - \nFxt}$ with high probability.

\paragraph{Difference between Convergence Measures.} We construct a function which explicitly showcases that the convergence rate of \nsgd\ deteriorates when different convergence measures are used. Similar constructions could potentially be applied to other algorithms and convergence measures.
\clearpage

\section{TECHNICAL RESULTS}
\label{sec:app.technical_results}
This section contains various technical results required for our analysis. We start with two lemmas that arise due to the normalization in \nsgd. Slightly different formulations of these were used by \citet{NSGD2021Zhao}.

\begin{lemma}\label{lem:app.remove_normalisation}
	For all $a,b \in \R^d$ with $b \neq 0$ we have
	\begin{align*}
		\frac{a^\top b}{\norm b} \geq \norm a - 2 \norm{a-b}.
	\end{align*}
\end{lemma}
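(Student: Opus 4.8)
The plan is to reduce everything to the Cauchy--Schwarz inequality and the triangle inequality after a single algebraic decomposition. The key idea is to split off the component of $a$ along $b$: writing $a^\top b = b^\top b + (a-b)^\top b$, we get
\begin{align*}
	\frac{a^\top b}{\norm b} = \norm b + \frac{(a-b)^\top b}{\norm b}.
\end{align*}
This isolates a clean positive term $\norm b$ and a correction term that we can control.

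Next I would bound the two pieces separately. For the correction term, Cauchy--Schwarz gives $(a-b)^\top b \geq -\norm{a-b}\,\norm b$, hence $\frac{(a-b)^\top b}{\norm b} \geq -\norm{a-b}$. For the leading term, the (reverse) triangle inequality gives $\norm b = \norm{a - (a-b)} \geq \norm a - \norm{a-b}$.

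Finally I would add the two estimates:
\begin{align*}
	\frac{a^\top b}{\norm b} \geq \pare{\norm a - \norm{a-b}} - \norm{a-b} = \norm a - 2\norm{a-b},
\end{align*}
which is the claim. The hypothesis $b \neq 0$ is used only to ensure the division by $\norm b$ is well-defined.

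There is no genuine obstacle here; this is an elementary inequality. The only non-obvious step is the initial decomposition $a^\top b = b^\top b + (a-b)^\top b$, which is precisely what makes the factor $2$ appear (one copy of $\norm{a-b}$ from Cauchy--Schwarz on the correction term, one from the triangle inequality on $\norm b$). I would double-check that the direction of each inequality is consistent, since both bounds go the same way and it is easy to flip a sign when handling the reverse triangle inequality.
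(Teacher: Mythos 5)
Your proof is correct and follows essentially the same route as the paper's: the identical decomposition $a^\top b = (a-b)^\top b + b^\top b$, Cauchy--Schwarz applied to the cross term, and the triangle inequality $\norm{a} \leq \norm{a-b} + \norm{b}$ to pass from $\norm{b}$ to $\norm{a} - \norm{a-b}$. The two arguments coincide step for step, so there is nothing further to reconcile.
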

\begin{proof}
	We calculate 
	\begin{align*}
		\frac{a^\top b}{\norm b} 
		= \frac{\pare{a-b}^\top b}{\norm b} + \norm b
		\geq -\norm{a-b} + \norm b
		\geq \norm a - 2 \norm{a-b},
	\end{align*}
	where we used Cauchy-Schwarz in the first, and $\norm{a} \leq \norm{a-b} + \norm{b}$ in the second inequality.
\end{proof}

\begin{lemma}[Expected Angle Bound]\label{lem:app.expected_angle}
	Let $\pare{\Omega, \mathfrak{A}, \P}$ be a probability space and $X \colon \Omega \to \R^d$ a random vector. Furthermore let $\mu \in \R^d \setminus \set{0}, \sigma \coloneqq \Exp{\norm{X - \mu}}$ and suppose that $X \neq 0$ almost surely. Then it holds that
	\begin{align*}
		\Exp{\frac{\mu^\top X}{\norm{\mu}{\norm{X}}}} \geq 1 - 2 \frac{\sigma}{\norm{\mu}}.
	\end{align*}
\end{lemma}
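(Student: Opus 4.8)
The plan is to reduce the claim to a pointwise inequality that follows immediately from \Cref{lem:app.remove_normalisation}, and then integrate over the probability space. The only subtlety is making sure every quantity is well-defined before taking expectations: because $X \neq 0$ almost surely, the random variable $\frac{\mu^\top X}{\norm{\mu}\norm{X}}$ is defined almost everywhere, and since it is a cosine it takes values in $[-1,1]$, so it is certainly integrable. Likewise $\norm{X - \mu}$ is integrable by the hypothesis that $\sigma = \Exp{\norm{X-\mu}} < \infty$.

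First I would apply \Cref{lem:app.remove_normalisation} pointwise with $a = \mu$ and $b = X(\omega)$. This is legitimate for almost every $\omega$ precisely because $X \neq 0$ almost surely guarantees $b \neq 0$, so the hypothesis of the lemma is met. This yields, almost surely,
\begin{align*}
	\frac{\mu^\top X}{\norm{X}} \geq \norm{\mu} - 2\norm{\mu - X}.
\end{align*}
Dividing both sides by $\norm{\mu} > 0$ (which is positive since $\mu \neq 0$) gives the pointwise bound $\frac{\mu^\top X}{\norm{\mu}\norm{X}} \geq 1 - \frac{2}{\norm{\mu}}\norm{\mu - X}$ almost surely.

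Next I would take the expectation of both sides. By monotonicity of the expectation, the symmetry $\norm{\mu - X} = \norm{X - \mu}$, and the definition $\sigma = \Exp{\norm{X - \mu}}$, this produces exactly
\begin{align*}
	\Exp{\frac{\mu^\top X}{\norm{\mu}\norm{X}}} \geq 1 - \frac{2\sigma}{\norm{\mu}},
\end{align*}
which is the desired conclusion.

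There is no real obstacle here: the entire argument is a pointwise application of the already-established \Cref{lem:app.remove_normalisation} followed by integration. The one point deserving a line of care is the integrability bookkeeping described above, which justifies passing from the almost-sure pointwise inequality to the inequality in expectation; everything else is mechanical.
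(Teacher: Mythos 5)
Your proof is correct and follows exactly the paper's own argument: apply \Cref{lem:app.remove_normalisation} with $a \gets \mu$ and $b \gets X$, divide by $\norm{\mu}$, and take expectations. The extra integrability bookkeeping (boundedness of the cosine, finiteness of $\sigma$) is sound but not essential; the paper leaves it implicit.
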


\begin{proof}
	We apply \Cref{lem:app.remove_normalisation} with $a \gets \mu$ and $b \gets X$ to derive
	\begin{align*}
		\frac{\mu^\top X}{\norm X} \geq \norm \mu - 2 \norm{\mu - X}.
	\end{align*}
	Dividing both sides by $\norm \mu$ and taking expectations yields the claim.
\end{proof}

The next lemma shows that $\ii$-dependent parameters have the \emph{same} (up to constants) behavior as constant, $\li$-dependent, parameters in \nsgd.

\begin{lemma}[{see \citet[Lemma 10]{NSGDM_LzLo2023Huebler}}]\label{lem:app.technical.dec_eq_const}
	Let $q \in (0,1), r \in [0,1]$ and $\li \in \N_{\geq 2}$. Then
	\begin{align*}
		\sum_{\ii = 1}^\li \ii^{-r} \prod_{\tau = \ii + 1}^\li \pare{1-\tau^{-q}}
		\leq 2\exp\pare{\frac 1 {1-q}}\pare{\li+1}^{q-r}.
	\end{align*}
\end{lemma}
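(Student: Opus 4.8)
The plan is to avoid summation-by-parts or integral comparisons entirely and instead reduce the statement to a one-line recursion in $\li$ that can be closed by induction. Writing $S_\li \coloneqq \sum_{\ii=1}^\li \ii^{-r}\prod_{\tau=\ii+1}^\li(1-\tau^{-q})$ for the left-hand side, I would isolate the $\ii = \li$ summand (whose product is empty, hence equal to $1$) and factor the common factor $(1-\li^{-q})$ out of every remaining product. This gives the recursion $S_\li = \li^{-r} + (1-\li^{-q})\,S_{\li-1}$, valid for $\li \ge 1$ with $S_0 = 0$ (in particular it reproduces $S_1 = 1$). I then claim $S_\li \le C(\li+1)^{q-r}$ with $C \coloneqq 2\exp\pare{\nicefrac{1}{(1-q)}}$ and prove this by induction on $\li$.

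For the base case $\li = 1$ we have $S_1 = 1$, and since $q - r > -1$ gives $2^{q-r} > \nicefrac12$, we get $C\,2^{q-r} > \exp\pare{\nicefrac{1}{(1-q)}} > 1 = S_1$. For the inductive step, assuming $S_{\li-1}\le C\,\li^{q-r}$, the recursion yields $S_\li \le \li^{-r} + (1-\li^{-q})C\li^{q-r} = (1-C)\li^{-r} + C\li^{q-r}$, so the target $S_\li \le C(\li+1)^{q-r}$ reduces to showing $C\big[\li^{q-r}-(\li+1)^{q-r}\big] \le (C-1)\li^{-r}$. When $r \le q$ the bracket is nonpositive and the inequality is immediate, so all the work lies in the regime $r > q$. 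There the mean value theorem together with $r - q < 1$ gives $\li^{q-r}-(\li+1)^{q-r}\le (r-q)\,\li^{q-r-1}\le \li^{-r}\li^{q-1}$, reducing the step to the scalar inequality $C\li^{q-1}\le C-1$, i.e.\ $\li^{-(1-q)}\le 1-\nicefrac1C$.

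The crux — and the only place the exact value of $C$ enters — is verifying this last inequality for every integer $\li \ge 2$ uniformly in $q$. Since $\li^{-(1-q)}$ decreases in $\li$, it suffices to check it at $\li = 2$, equivalently that the threshold $(1-\nicefrac1C)^{-1/(1-q)}$ is strictly below $2$. Setting $a \coloneqq \nicefrac{1}{(1-q)}\ge 1$ and using $\log(1-x)\ge -2x$ for $x \le \nicefrac12$ (applicable since $\nicefrac12\,e^{-a}\le \nicefrac12\,e^{-1}<\nicefrac12$), this reduces to $a\,e^{-a} < \log 2$, which holds because $\max_{a\ge 1} a\,e^{-a} = e^{-1} < \log 2$. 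This closes the induction. I expect this uniform threshold verification to be essentially the only obstacle: the recursion and the single-term mean value estimate are elementary, whereas confirming that the specific constant $2\exp\pare{\nicefrac{1}{(1-q)}}$ is large enough to make the induction close for all $\li \ge 2$ and all $q \in (0,1)$ is what forces the slightly delicate bound $a e^{-a}\le e^{-1}$.
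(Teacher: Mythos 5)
Your induction proof is correct, but note that the paper itself never proves this lemma --- it is imported wholesale by citation from \citet[Lemma~10]{NSGDM_LzLo2023Huebler} --- so your argument is necessarily a different route from the one the authors rely on. The cited proof follows the standard template for such weighted-product sums: bound each factor via $1-\tau^{-q}\leq \exp(-\tau^{-q})$, so the product becomes $\exp\bigl(-\sum_{\tau=t+1}^{T}\tau^{-q}\bigr)$, then control the inner sum and the outer sum by integral comparisons; this is where the factor $\exp\bigl(\frac{1}{1-q}\bigr)$ naturally appears. Your approach instead exploits the recursion $S_T = T^{-r} + (1-T^{-q})S_{T-1}$ and closes an induction on the claimed bound $S_T \le C (T+1)^{q-r}$ with $C = 2\exp\bigl(\frac{1}{1-q}\bigr)$. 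I checked the key steps: the recursion and base case are right; in the regime $r>q$ the mean value theorem gives $T^{q-r}-(T+1)^{q-r}\le (r-q)T^{q-r-1}\le T^{-r}T^{q-1}$ (using $r-q\le 1$), reducing the step to $T^{-(1-q)}\le 1-\tfrac1C$ for $T\ge 2$; and your uniform verification of this at $T=2$ via $\log(1-x)\ge -2x$ on $[0,\tfrac12]$ and $\max_{a\ge 1}a e^{-a}=e^{-1}<\log 2$ is valid. What your route buys is a fully self-contained, integral-free argument in which the only nontrivial input is a scalar inequality; what it costs is that the specific constant must be re-verified to make the induction close (the argument is rigid in $C$), whereas the exponential-plus-integral route produces the constant constructively and generalizes more readily to, e.g., non-polynomial step-size or momentum schedules.
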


To control the error of momentum or mini-batch gradient estimator, we use a von Bahr and Esseen type inequality stated in \Cref{lem:app.von_Bahr_and_Essen}. This result was initially proved by \citet{von1965inequalities} for $d=1$. Later, it was extended to Banach-Spaces \citep[Proposition 2.4]{Pisier1975} and optimal constants were derived \citep{Cox_1982}. An alternative proof was rediscovered by \citet{cherapanamjeri2022optimal} for the one dimensional i.i.d.\ case and extended to higher dimensions by \citet{kornilov2024accelerated}. The extension in the latter work has some inaccuracies, which we fix below. 

\begin{lemma}\label{lem:app.von_Bahr_and_Essen}
	Let $p \in [1,2]$, and $X_1, \ldots, X_n \in \R^d$ be a martingale difference sequence (MDS), i.e., $\Exp[X_{j-1}, \ldots, X_1]{X_{j}} = 0$ a.s. for all $j = 1, \ldots, n$ satisfying 
	\begin{align*}
	\Exp{\norm{X_j}^p } < \infty \qquad \text{for all } j = 1, \ldots, n. 
	\end{align*}
	Define $S_n :=  \sum_{j=1}^n X_j$, then 
	\begin{align*}
		\Exp{\norm{S_n}^p} \leq 2 \sum_{j=1}^n \Exp{\norm{X_j}^p}.
	\end{align*}
\end{lemma}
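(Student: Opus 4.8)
The plan is to reduce the martingale statement to a single deterministic pointwise inequality and then telescope along the filtration. Set $\mathcal{F}_k := \sigma(X_1, \dots, X_k)$, $S_0 := 0$ and $S_k := \sum_{j=1}^k X_j$. The crux is the elementary inequality: for $p \in [1,2]$ and all $u, v \in \mathbb{R}^d$,
\begin{align*}
	\|u+v\|^p \le \|u\|^p + p\|u\|^{p-2}\langle u, v\rangle + 2\|v\|^p,
\end{align*}
with the convention that the middle term vanishes at $u = 0$ (its continuous extension, since $p\|u\|^{p-2}\langle u,v\rangle = O(\|u\|^{p-1}) \to 0$). Here $p\|u\|^{p-2}u$ is the gradient of the map $u \mapsto \|u\|^p$, which is $C^1$ for $p > 1$, so the first two terms on the right form its first-order Taylor expansion at $u$, and the inequality is a $p$-th power analogue of a smoothness bound.

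Granting this, I would apply it with $u \gets S_{k-1}$ and $v \gets X_k$ and take conditional expectation given $\mathcal{F}_{k-1}$. Since $\|S_{k-1}\|^{p-2}S_{k-1}$ is $\mathcal{F}_{k-1}$-measurable and $\mathbb{E}[X_k \mid \mathcal{F}_{k-1}] = 0$ by the MDS hypothesis, the linear cross term has zero conditional expectation, yielding
\begin{align*}
	\mathbb{E}\!\left[\|S_k\|^p \mid \mathcal{F}_{k-1}\right] \le \|S_{k-1}\|^p + 2\,\mathbb{E}\!\left[\|X_k\|^p \mid \mathcal{F}_{k-1}\right].
\end{align*}
Taking total expectations and telescoping from $k=1$ to $n$ (using $S_0 = 0$) gives $\mathbb{E}\|S_n\|^p \le 2\sum_{j=1}^n \mathbb{E}\|X_j\|^p$. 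Integrability is routine and should be recorded: $\|S_k\|^p \le k^{p-1}\sum_{j\le k}\|X_j\|^p$ by Jensen, and the cross term is dominated via Young's inequality $\|S_{k-1}\|^{p-1}\|X_k\| \le \frac{p-1}{p}\|S_{k-1}\|^p + \frac{1}{p}\|X_k\|^p$ (exponents $\frac{p}{p-1}$ and $p$), which justifies splitting the conditional expectation and pulling out the measurable factor.

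The main obstacle is proving the elementary inequality, which is exactly where the cited extension contains the gap to be repaired. Both sides are jointly positively homogeneous of degree $p$ in $(u,v)$, and the Euclidean norm is rotation invariant, so I would first reduce to the plane $\mathrm{span}\{u,v\}$ and normalize $\|u\| = 1$, reducing the claim to a scalar family indexed by the ratio $\|v\|$ and the angle between $u$ and $v$. The endpoints act as sanity checks: $p=1$ follows from the triangle inequality together with $\langle u,v\rangle \ge -\|u\|\|v\|$, and $p=2$ reduces to $\|v\|^2 \le 2\|v\|^2$. For $p \in (1,2)$ I would study $v \mapsto \|u+v\|^p - \|u\|^p - p\|u\|^{p-2}\langle u,v\rangle - 2\|v\|^p$, verifying nonpositivity by examining the small-$\|v\|$ regime (where the subtracted $2\|v\|^p$ dominates the quadratic Taylor remainder because $p \le 2$), the large-$\|v\|$ regime (where $\|u+v\|^p - 2\|v\|^p \sim -\|v\|^p < 0$), and the compact intermediate range via convexity or direct calculus. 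Particular care is needed with the non-smoothness of $\|\cdot\|^p$ at the origin and with tracking the constant $2$ uniformly over all $p \in [1,2]$.
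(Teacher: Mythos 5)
You take a genuinely different route from the paper, and the martingale half of your argument is correct: granting the pointwise inequality, conditioning on $\mathcal{F}_{k-1}$ kills the cross term by the MDS property (your Young-inequality domination does justify splitting the conditional expectation and pulling out the $\mathcal{F}_{k-1}$-measurable factor), and telescoping from $S_0=0$ gives the stated bound with constant $2$. For comparison, the paper avoids any pointwise smoothness inequality: it takes $g\sim\mathcal{N}(0,I)$ independent of everything, forms the scalar MDS $y_j:=g^{\top}X_j$, uses $\mathbb{E}\left[|g^{\top}a|^p\right]=C(p)\,\|a\|^p$ to convert $p$-th moments of norms into $p$-th moments of scalars, invokes the classical one-dimensional von Bahr--Esseen inequality for the $y_j$, and cancels $C(p)$ on both sides. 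So the paper outsources the analytic content to a citable scalar result, while your route would make the lemma self-contained --- but only if the pointwise inequality is actually proved.

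That is the genuine gap: the inequality $\|u+v\|^p\le\|u\|^p+p\|u\|^{p-2}\langle u,v\rangle+2\|v\|^p$ is asserted, not established, and it carries essentially all of the difficulty of the lemma. Your plan for it is not yet a proof: the endpoints $p\in\{1,2\}$ and the small-/large-$\|v\|$ asymptotics are the easy part, while the ``compact intermediate range via convexity or direct calculus'' --- a two-parameter family in the ratio and the angle, uniform over $p\in[1,2]$ --- is exactly where the work lies. The inequality is in fact true with constant $2$, and your reduction can be completed as follows. By homogeneity and rotation invariance take $u=e_1$, $v=r(\cos\theta,\sin\theta)$, and set $G(r,\theta):=\left(1+2r\cos\theta+r^2\right)^{p/2}-1-pr\cos\theta-2r^p$. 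For fixed $r$, $\partial_\theta G=pr\sin\theta\left[1-\|u+v\|^{p-2}\right]$, so the maximum over $\theta$ is attained either at $\theta\in\{0,\pi\}$ or at an angle where $\|u+v\|=1$, i.e.\ $\cos\theta=-r/2$ (only possible for $r\le 2$). In the latter case $G=\frac{p}{2}r^2-2r^p\le 0$, since $r^{2-p}\le 2^{2-p}\le 4/p$ for $p\in[1,2]$. The two remaining cases are the scalar inequalities $(1+r)^p\le 1+pr+2r^p$ and $|1-r|^p\le 1-pr+2r^p$ for $r\ge 0$; both follow by checking the value at $r=0$ (resp.\ splitting at $r=1$) and showing the difference is non-increasing in $r$, the sign of the derivative being controlled by subadditivity of $t\mapsto t^{p-1}$ for $p-1\in[0,1]$ (e.g.\ $(1+r)^{p-1}\le 1+r^{p-1}$ gives $\frac{d}{dr}\left[(1+r)^p-1-pr-2r^p\right]\le -pr^{p-1}\le 0$). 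With this lemma in place, your telescoping argument is complete and yields exactly the paper's statement.
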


\begin{proof}
 	The claim is true for $d=1$, see \citep[Equation (4)]{von1965inequalities}. Namely, let 
 	$y_1, \ldots, y_n \in \R$ be a MDS, i.e., $\Exp[y_{j-1}, \ldots, y_1]{y_{j}} = 0$ a.s. for all $j = 1, \ldots, n$, satisfying 
 	\begin{align}\label{eq:pBCM_d1}
 		\Exp{|y_j|^p } < \infty \qquad \text{for all } j = 1, \ldots, n. 
 	\end{align}
	Then 
 	\begin{align}\label{eq:vBE_d1}
 		\Exp{\left|\sum_{j=1}^{n} y_j\right|^p} \leq 2 \sum_{j=1}^n \Exp{|y_j|^p}.
 	\end{align}
 	Following \citep{kornilov2024accelerated}, define $g \sim \mathcal N(0, I)$ and $y_j := g^{\top} X_j$, where $g$ is independent of $X_j$. We need to verify that $y_1, \ldots, y_n$ defined this way is indeed a MDS. Define the sigma algebra $\mathcal H_1 := \sigma(y_{j-1}, \ldots, y_1)$ and $\mathcal H_2 := \sigma(X_{j-1}, \ldots, X_1, g)$.  Observe that $\mathcal H_1 \subset \mathcal H_2$ and by the tower property
 	\begin{align*}
	 	\Exp{g^{\top} X_j |\mathcal H_1} =  \Exp{ \Exp{ g^{\top}   X_j | \mathcal H_2 } | \mathcal H_1 } = 	\Exp{  g^{\top}  \Exp{ X_j | \mathcal H_2 } | \mathcal H_1 } = 0 , 
 	\end{align*}
 	
 	where the second equality holds because $g$ is $\mathcal H_2$-measurable and the last equality holds by independence of $X_j$ and $g$, and the assumption that $\Exp[X_{j-1}, \ldots, X_1]{X_{j}} = 0$ a.s. Next, we need to verify that $\Exp{|y_j|^p} < \infty$. We know that $g^{\top} a \sim \mathcal N(0, \sqnorm{a})$ for any vector $a\in \R^d$. Therefore, 
 $
 		\Exp{|y_j|^p | X_j} = \Exp{| g^{\top} X_j |^p | X_j} = C(p) \norm{X_j}^p , 
 $
 	with $C(p) := 2^{\nicefrac{p}{2}} \frac{\Gamma\rb{\frac{p+1}{2}}}{\sqrt{\pi}}$, where we used the $p$-th absolute moment of normal distribution applied to a random variable $g^{\top} X_j$ given $X_j$. Taking full expectation, we get 
\begin{equation}\label{eq:exp_lp}
   \Exp{|y_j|^p} = C(p) \, \Exp{\norm{X_j}^p } < \infty. 
  \end{equation}
	We have verified that the sequence $y_1, \ldots, y_n$ is a MDS and property \eqref{eq:pBCM_d1} holds, thus we are ready to apply \eqref{eq:vBE_d1}. Using the $p$-th moment of normal distribution applied to $g^{\top} S_n$ given $S_n$, we have 
	\begin{align*}
		C(p) \, \Exp{\norm{S_n}^p} =  \Exp{\Exp{| g^{\top} S_n |^p | S_n} } = \Exp{\left| \sum_{j=1}^n y_j \right|^p } \leq 2 \sum_{j=1}^n \Exp{|y_j|^p} ,
	\end{align*}
	where in the last step we used \eqref{eq:vBE_d1}. It remains to use \eqref{eq:exp_lp} to conclude the proof. 

\end{proof}

We use the following martingale concentration inequality for our high-probability guarantees, see e.g., \citep[Lemma 1]{hpSGDM2020Li}. 

\begin{lemma}\label{lem:app.technical.conc_ineq_MDS}
	Let $\pare{\Fct}_{\ii \in \N}$ be a Filtration and $\pare{D_t}_\tinN$ a Martingale Difference Sequence with respect to $\pare{\Fct}_\tinN$. Furthermore, for each $\ii \in \Ngeq$, let $G_t$ be $\Fctm$-measurable and assume that $\Exp[\Fctm]{\exp\pare{\frac{D_t^2}{G_t^2}}} \leq e$. Then, for all $T \in \N$,
	\begin{align*}
		\fas \lambda > 0, \delta \in \pare{0,1}\colon \P \pare{\iSum D_t \leq \frac 3 4 \lambda \iSum G_t^2 + \frac 1 \lambda \log \pare{\frac 1 \delta}} \geq 1 - \delta.
	\end{align*}
\end{lemma}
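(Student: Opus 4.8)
The plan is to run the standard exponential (Chernoff) supermartingale argument, whose only nonstandard ingredient is a conditional sub-Gaussian moment generating function (MGF) bound distilled from the hypothesis $\Exp[\Fctm]{\exp\pare{D_t^2/G_t^2}} \le e$. Throughout, I fix arbitrary $\lambda > 0$ and $\delta \in (0,1)$; since these are quantified \emph{outside} the probability, the estimate only needs to be proven for one fixed $\lambda$ at a time, so no union bound over $\lambda$ is required.

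First I would establish the key one-step estimate $\Exp[\Fctm]{\exp\pare{\lambda D_t}} \le \exp\pare{\frac{3}{4}\lambda^2 G_t^2}$ for every $t$. This is where the two hypotheses combine: because $G_t$ is $\Fctm$-measurable it acts as a deterministic scale under $\Exp[\Fctm]{\cdot}$; the martingale-difference property supplies $\Exp[\Fctm]{D_t} = 0$; and the exponential-moment bound plays the role of a conditional sub-Gaussian norm tying $D_t$ to $G_t$. Concretely, I would use a pointwise numerical inequality together with Jensen's inequality (exploiting that $u \mapsto u^{c}$ is concave for $c \le 1$) to pass from $\Exp[\Fctm]{\exp\pare{D_t^2/G_t^2}} \le e$ to control of $\Exp[\Fctm]{\exp\pare{\frac{3}{4}\lambda^2 D_t^2}}$, and then invoke zero-mean to eliminate the linear term. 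I expect this to be \emph{the main obstacle}: one must secure the precise constant $\frac{3}{4}$ and, crucially, validity for \emph{all} $\lambda$ rather than merely small $\lambda$, which is exactly the content of the classical zero-mean sub-Gaussian MGF lemma.

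Granting the MGF bound, I would define $M_t \coloneqq \exp\pare{\lambda \sum_{s=1}^{t} D_s - \frac{3}{4}\lambda^2 \sum_{s=1}^{t} G_s^2}$ with $M_0 = 1$. Using $\Fctm$-measurability of $G_t$ to factor out $\exp\pare{-\frac{3}{4}\lambda^2 G_t^2}$ and then applying the one-step estimate yields $\Exp[\Fctm]{M_t} \le M_{t-1}$, so $\pare{M_t}$ is a nonnegative supermartingale; telescoping via the tower property gives $\Exp{M_\li} \le M_0 = 1$.

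Finally I would apply Markov's inequality to the nonnegative variable $M_\li$: $\P\pare{M_\li \ge \nicefrac{1}{\delta}} \le \delta \, \Exp{M_\li} \le \delta$. Taking logarithms inside the event and dividing by $\lambda > 0$ converts $M_\li \ge \nicefrac{1}{\delta}$ into $\iSum D_t \ge \frac{3}{4}\lambda \iSum G_t^2 + \frac{1}{\lambda}\logood$, so the complementary event---precisely the claimed inequality---holds with probability at least $1-\delta$. The degenerate case $\li = 0$ is immediate since $\logood \ge 0$.
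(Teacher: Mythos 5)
Your plan is correct, and it is in fact the canonical argument: the paper does not prove this lemma itself, but imports it directly from the literature (it is stated with the citation \citet[Lemma 1]{hpSGDM2020Li}), and the proof there is exactly your Chernoff/supermartingale scheme. The one step you deferred --- the one-step bound $\mathbb{E}\bigl[\exp(\lambda D_t)\mid \mathcal{F}_{t-1}\bigr] \le \exp\bigl(\tfrac{3}{4}\lambda^2 G_t^2\bigr)$ with the specific constant $\tfrac{3}{4}$ valid for \emph{all} $\lambda > 0$ --- does close via the two-case argument you gestured at, and it is worth recording that the constants fit together exactly. If $\lambda^2 G_t^2 \le \tfrac{16}{9}$, use the pointwise inequality $e^x \le x + e^{9x^2/16}$, kill the linear term with the martingale-difference property, and apply conditional Jensen with the concave map $u \mapsto u^{c}$ for $c = \tfrac{9}{16}\lambda^2 G_t^2 \le 1$ to get $\mathbb{E}\bigl[\exp(\tfrac{9}{16}\lambda^2 D_t^2)\mid \mathcal{F}_{t-1}\bigr] \le \exp\bigl(\tfrac{9}{16}\lambda^2 G_t^2\bigr) \le \exp\bigl(\tfrac{3}{4}\lambda^2 G_t^2\bigr)$. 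If $\lambda^2 G_t^2 \ge \tfrac{16}{9}$, use Young's inequality $\lambda D_t \le \tfrac{3}{8}\lambda^2 G_t^2 + \tfrac{2D_t^2}{3G_t^2}$ together with Jensen at exponent $\tfrac{2}{3}$, which yields $\mathbb{E}\bigl[\exp(\lambda D_t)\mid \mathcal{F}_{t-1}\bigr] \le \exp\bigl(\tfrac{3}{8}\lambda^2 G_t^2 + \tfrac{2}{3}\bigr) \le \exp\bigl(\tfrac{3}{4}\lambda^2 G_t^2\bigr)$ precisely because $\tfrac{2}{3} \le \tfrac{3}{8}\lambda^2 G_t^2$ at the threshold $\lambda^2 G_t^2 = \tfrac{16}{9}$ (note this case needs no zero-mean assumption). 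Everything downstream in your proposal --- factoring out the $\mathcal{F}_{t-1}$-measurable $G_t^2$, the tower property giving $\mathbb{E}\bigl[M_T\bigr] \le 1$, Markov's inequality at level $1/\delta$, the observation that no union bound over $\lambda$ is needed since $\lambda$ sits outside the probability, and the trivial $T=0$ case --- is correct as written.
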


In order to apply \Cref{lem:app.von_Bahr_and_Essen}, we require the following technical Lemmas on conditional expectations. The first Lemma formalises the intuition that \emph{conditioning on $X$} corresponds to \emph{fixing $X$}.

\begin{lemma}[{c.f.\ \citet[Example 4.1.7]{probability2019Durrett}}] \label{lem:app.technical.cond_exp}
	Let $\pare{\Omega, \Fc, \P}$ be a probability space and $X, Y$ be independent random variables mapping to measurable spaces $\pare{E_1, \Sigma_1}$ and $\pare{E_2, \Sigma_2}$ respectively. Furthermore let $h \colon E_1 \times E_2 \to \R^d$ be a (Lebesgue-)measurable function with $\Exp{\norm{h\pare{X,Y}}} < \infty$. Then
	\begin{align*}
		\Exp[X]{h\pare{X,Y}} \stackrel{\text{a.s.}}{=} g(X), \qquad \text{where} \qquad g(x) \coloneqq \Exp{h\pare{x, Y}}.
	\end{align*}
\end{lemma}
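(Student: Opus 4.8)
The plan is to prove this via the standard "typical machine" of measure theory, reducing the claim to progressively simpler classes of functions $h$ and invoking independence only at the base case. Since $h$ takes values in $\R^d$, I would first argue coordinate by coordinate, so that it suffices to treat scalar-valued $h$. Splitting such an $h$ into its positive and negative parts, and noting that the hypothesis $\Exp{\norm{h\pare{X,Y}}} < \infty$ forces both parts to be integrable, it further suffices to establish the identity for nonnegative measurable $h$.

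For nonnegative $h$ I would verify the two defining properties of the conditional expectation $\Exp[X]{h\pare{X,Y}}$: that the candidate $g(X)$ is $\sigma(X)$-measurable, and that $\Exp{g(X)\,\mathbf{1}_A} = \Exp{h\pare{X,Y}\,\mathbf{1}_A}$ for every $A \in \sigma(X)$. Both the measurability of $g$ and the fact that $g(x) = \Exp{h\pare{x,Y}}$ is well-defined and finite for $\P_X$-a.e.\ $x$ follow from the Tonelli/Fubini theorem applied to the product measure $\P_X \otimes \P_Y$, which, by independence, is precisely the joint law of $(X,Y)$.

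The heart of the argument is the base case together with its extension. First I would establish the identity for indicators of measurable rectangles, $h = \mathbf{1}_{B_1 \times B_2}$ with $B_1 \in \Sigma_1$, $B_2 \in \Sigma_2$: here $h\pare{X,Y} = \mathbf{1}_{B_1}(X)\,\mathbf{1}_{B_2}(Y)$, and since $Y$ is independent of $\sigma(X)$, pulling out the $\sigma(X)$-measurable factor gives $\Exp[X]{h\pare{X,Y}} = \mathbf{1}_{B_1}(X)\,\P(Y \in B_2)$, which coincides with $g(X) = \mathbf{1}_{B_1}(X)\,\P(Y \in B_2)$. The collection of sets $B$ in the product $\sigma$-algebra $\Sigma_1 \otimes \Sigma_2$ for which the identity holds with $h = \mathbf{1}_B$ forms a $\lambda$-system containing the $\pi$-system of rectangles, so Dynkin's $\pi$--$\lambda$ theorem extends the identity to all $B \in \Sigma_1 \otimes \Sigma_2$. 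Linearity then covers nonnegative simple functions, and monotone convergence upgrades the result to arbitrary nonnegative measurable $h$, after which the reductions above recombine to the general integrable $\R^d$-valued case.

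The main obstacle I anticipate is measure-theoretic bookkeeping rather than any deep idea: one must take care that $g$ is genuinely measurable and a.e.\ finite before it can serve as a version of the conditional expectation, and that the interchange of conditional expectation with integration is justified at each stage (monotone convergence for the limiting step, Fubini/Tonelli for the well-definedness of $g$). Independence is used in an essential way exactly once, in the rectangle base case, to replace the conditional factor $\Exp[X]{\mathbf{1}_{B_2}(Y)}$ by the unconditional $\P(Y \in B_2)$; everything else is the routine extension machinery.
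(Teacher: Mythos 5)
Your proof is correct, but it takes a genuinely different route from the paper's. The paper verifies the defining property of conditional expectation directly for general integrable $h$: writing $A = X^{-1}(B)$ with $B \in \Sigma_1$, it pushes $\mathbb{E}\left[h(X,Y)\mathbf{1}_A\right]$ forward to the product space $E_1 \times E_2$, uses independence exactly once to identify the joint law of $(X,Y)$ with the product measure $\mu \otimes \nu$, and then applies Fubini's theorem in a single stroke to factor the inner integral into $g$. You instead rebuild that factorization by hand with the standard machine: the pull-out property together with independence gives the rectangle base case $\mathbb{E}\left[\mathbf{1}_{B_1}(X)\mathbf{1}_{B_2}(Y)\,\middle|\,\sigma(X)\right] = \mathbf{1}_{B_1}(X)\,\mathbb{P}(Y\in B_2)$, Dynkin's $\pi$--$\lambda$ theorem extends the identity to all of $\Sigma_1 \otimes \Sigma_2$, and linearity plus conditional monotone convergence handle simple and then general nonnegative $h$, with the coordinatewise and positive/negative-part reductions wrapped around everything. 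The paper's route buys brevity: Fubini, whose own proof is this same standard machine, does all the heavy lifting, and no conditional-expectation tools beyond the definition are needed. Your route buys a more self-contained argument at the conditional-expectation level: Fubini/Tonelli enters only to certify that $g$ is measurable and a.e.\ finite (which the paper needs as well), and the role of independence is isolated transparently in the base case --- the same place the paper uses it, but in conditional rather than unconditional form. One bookkeeping point you should make explicit in a full write-up: in the $\lambda$-system and monotone-convergence steps the exceptional null set depends on the set or function in question, so each closure step must combine only countably many almost-sure identities --- which is exactly what those steps do, but it deserves a sentence.
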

\begin{proof}
	\newcommand{\ind}{1}
	First note that, by Fubini's Theorem, $g$ is $\Sigma_1 / \mathcal{B}^d$ measurable and hence $g(X)$ is $\sigma\pare{X} / \mathcal{B}^d$ measurable. Therefore it suffices to show that
	\begin{align*}
		\Exp{h\pare{X,Y} \ind_A} = \Exp{g(X) \ind_A}
	\end{align*}
	for all $A \in \sigma\pare X$. First note that, by definition of $\sigma(X) = \set{X^{-1} \pare{C}\colon C \in \Sigma_1}$, there exists $B \in \Sigma_1$ with $A = X^{-1}(B)$. Next, by independence of $X$ and $Y$, their joint induced measure is a product measure $\mu \otimes \nu$ on $E_1 \times E_2$. Combining, we get
	\begin{align*}
		\Exp{h\pare{X,Y}\ind_A}
		= \int_{A} h\pare{X\pare{\omega},Y\pare{\omega}} d\P\pare{\omega}
		= \int_{E_1 \times E_2} h\pare{x,y} \ind_B\pare{x} d \pare{\mu \otimes \nu} \pare{x,y}.
	\end{align*}
	By our assumption $\Expnorm{h\pare{X,Y}} < \infty$ we know that $h$ is $\mu \otimes \nu$ integrable and Fubini's Theorem hence yields
	\begin{align*}
		\int_{E_1 \times E_2} h\pare{x,y} \ind_B\pare{x} d \pare{\mu \otimes \nu} \pare{x,y}
		= \int_{E_1} \int_{E_2} h\pare{x,y} d \nu\pare{y} \ind_B\pare{x} d \mu \pare x
		= \int_{E_1} g \pare x \ind_B\pare x d \mu \pare x
		= \Exp{g \pare X \ind_A}.
	\end{align*}
	This completes the proof.
\end{proof}

The second Lemma allows us to formally \emph{remove unnecessary conditioning}.

\begin{lemma}\label{lem:app.technical.remove_unnecessary_cond}
	Let $\pare{\Omega, \Fc, \P}$ be a probability space and $X, Y$ be random variables mapping to measurable spaces $(E_1, \Sigma_1)$ and $(E_2, \Sigma_2)$ respectively. Furthermore let $\cH \subseteq \Fc$ be a sigma algebra such that $\sigma(X) \subseteq \cH$ and assume $\sigma(Y)$ is independent of $\cH$. Finally let $h \colon E_1 \times E_2 \to \R^d$ be a (Lebesgue-)measurable function with $\Exp{\norm{h(X,Y)}} < \infty$. Then
	\begin{align*}
		\Exp[\cH]{h(X,Y)} \overset{\text{a.s.}}{=} \Exp[X]{h(X,Y)}.
	\end{align*}
\end{lemma}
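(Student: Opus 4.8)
The plan is to exhibit one explicit version of the conditional expectation $\Exp[\cH]{h(X,Y)}$ and recognise it simultaneously as $\Exp[X]{h(X,Y)}$. Concretely, I would set $g(x) \coloneqq \Exp{h\pare{x,Y}}$, exactly as in \Cref{lem:app.technical.cond_exp}, and show that $g(X)$ equals both conditional expectations almost surely. For the right-hand side this is immediate: since $\sigma(X) \subseteq \cH$ and $\sigma(Y)$ is independent of $\cH$, the variables $X$ and $Y$ are in particular independent (independence of $\cH$ and $\sigma(Y)$ restricts to $\sigma(X)$ and $\sigma(Y)$), so \Cref{lem:app.technical.cond_exp} applies verbatim and yields $\Exp[X]{h(X,Y)} \overset{\text{a.s.}}{=} g(X)$. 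It therefore remains only to check that $g(X)$ is also a version of $\Exp[\cH]{h(X,Y)}$.

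To do this I would verify the two defining properties of conditional expectation. Measurability is routine: by Fubini's theorem $g$ is $\Sigma_1/\mathcal B^d$-measurable, and since $\sigma(X) \subseteq \cH$ the composition $g(X)$ is $\cH$-measurable. The substantive step is the integral identity: for every $A \in \cH$ I must establish
\begin{align*}
	\Exp{h(X,Y)\,\mathbf 1_A} = \Exp{g(X)\,\mathbf 1_A}.
\end{align*}

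The key observation --- and the only place the hypothesis that $\sigma(Y)$ is independent of $\cH$ is used --- is that both $X$ and $\mathbf 1_A$ are $\cH$-measurable, so the bundled variable $Z \coloneqq \pare{X, \mathbf 1_A}$ is $\cH$-measurable and hence independent of $Y$. Consequently the joint law of $\pare{Z, Y}$ is a product measure $\mu_Z \otimes \nu_Y$; writing the left-hand side as the integral of $\pare{x,a,y} \mapsto h(x,y)\,a$ against this product measure and invoking Fubini's theorem (justified by $\Expnorm{h(X,Y)} < \infty$, which also secures the integrability of $g$ used above), I can perform the $y$-integration first, producing exactly $g(x)$, and the remaining $\int g(x)\,a\,d\mu_Z(x,a) = \Exp{g(X)\,\mathbf 1_A}$ is the desired identity. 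The main obstacle is thus purely the measure-theoretic bookkeeping --- correctly identifying the product measure from the independence hypothesis and confirming the integrability needed for Fubini --- which mirrors the computation already carried out in the proof of \Cref{lem:app.technical.cond_exp}. Combining the two identifications of $g(X)$ gives $\Exp[\cH]{h(X,Y)} \overset{\text{a.s.}}{=} \Exp[X]{h(X,Y)}$.
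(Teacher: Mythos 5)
Your proposal is correct and takes essentially the same route as the paper's own proof: both identify $\Exp[X]{h(X,Y)}$ with $g(X)$ via \Cref{lem:app.technical.cond_exp} (using that $X$ and $Y$ are independent because $\sigma(X)\subseteq\cH$), and both verify the defining integral identity over $A \in \cH$ by observing that the bundled variable $\pare{X, \mathbf{1}_A}$ is $\cH$-measurable and hence independent of $Y$, so the joint law factors into a product measure and Fubini peels off the $y$-integration to produce $g$. The only difference is presentational: you exhibit $g(X)$ as a common version of both conditional expectations, whereas the paper shows directly that $\Exp[X]{h(X,Y)}$ satisfies the two defining properties of $\Exp[\cH]{h(X,Y)}$ --- the underlying computations coincide.
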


\begin{proof}
	\newcommand{\ind}{\mathbf{1}}
	First note that, due to $\sigma(X) \subseteq \cH$, $\Exp[X]{h(X,Y)}$ is $\cH$-measurable and hence it suffices to show
	\begin{align*}
		\fas A \in \cH \colon \Exp{\Exp[X]{h(X,Y)} \cdot \ind_A} = \Exp{h(X,Y) \cdot \ind_A}.
	\end{align*}
	Note that, by independence of $\sigma(Y)$ and $\cH \supseteq \sigma(X)$, $X$ and $Y$ are independent and their joint induced measure is hence a product measure $\mu \otimes \nu$ on $E_1 \times E_2$. Therefore let $A \in \cH$ and define $g(x) \coloneqq \Exp{h(x, Y)}$. Hence, by \Cref{lem:app.technical.cond_exp}, we have $\Exp{\Exp[X]{h(X,Y)} \cdot \ind_A} = \Exp{g(X) \cdot \ind_A}$. Next let $\nu$ be the push-forward measure of $Y$ on $E_2$ and $\mu$ the push-forward measure of $(X, \ind_A)$ on $E_1 \times \{0,1\}$. Then
	\begin{align*}
		\Exp{g(X) \cdot \ind_A}
		&= \int_{E_1 \times \{0,1\}} \pare{\int_{E_2} h(x,y) \, d\nu(y)} \chi \, d\mu(x, \chi)
		= \int_{E_2 \times E_1 \times \{0,1\}} h(x,y) \chi \, d(\nu \otimes \mu)(y,x,\chi),
	\end{align*}
	where we used Fubini in the last equality. Finally, by independence of $Y$ and $(X, \ind_A)$, we have that $\nu \otimes \mu = \gamma$, where $\gamma$ is the joint push-forward measure of $(Y, X, \ind_A)$. Combining, we hence have
	\begin{align*}
		\Exp{\Exp[X]{h(X,Y)} \cdot \ind_A}
		&= \int_{E_2 \times E_1 \times \{0,1\}} h(x,y) \chi \, d\gamma(y,x,\chi) 
		= \int_\Omega h(X(\omega), Y(\omega)) \ind_A(\omega) \, d\mathbb{P}(\omega) 
		= \Exp{h(X,Y) \cdot \ind_A}.
	\end{align*}
	This completes the proof.
\end{proof}
\clearpage

\section{UPPER-BOUNDS FOR NSGD}
\label{sec:app.missing}
{
\newcommand{\ssl}{\bar{\ssi}}
\newcommand{\Bl}{\bar{B}}
\newcommand{\xitGen}[1]{\xit^{\pare{#1}}}
\newcommand{\xitj}{\xitGen{j}}
\newcommand{\Hctj}{\cH_\ii^{(j)}}
\newcommand{\Hctjt}{\widetilde{\cH}_\ii^{(j)}}

This section contains the proofs that are missing in the main part of the paper. Throughout this section we denote the iterates generated by \nsgd\ with $\pare{\xt}_{\tinN}$. Furthermore, we denote the natural filtration of the gradient estimates by $\Fct \coloneqq \sigma \pare{\iterationg \fin, \dots, \gt}$.

We start by deriving a descent lemma. While such descent lemmas are well studied for \nsgdName in the literature --- to the best of our knowledge --- none highlight the importance of the cosine between $\gt$ and $\nFxt$. As this term will play a crucial role in our high-probability result, we will provide our version of the descent lemma and its proof below.

\begin{lemma}[Descent Lemma]\label{lem:app.descent_lemma}
	Assume \lb\ and \lsmooth. Furthermore let
	\begin{align*}
		\phit \coloneqq \frac{\nFxt^\top \gt}{\nnFxt \norm{\gt}}
	\end{align*}
	denote the cosine between $\gt$ and $\nFxt$. Then the iterates of \nsgd\ satisfy
	\begin{align*}
		\iSum \sst \phit \nnFxt \leq \Dz + \frac \sm 2 \iSum \sst^2.
	\end{align*}
\end{lemma}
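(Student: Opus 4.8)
The plan is to prove this descent lemma by applying the standard smoothness-based descent inequality to consecutive iterates and then summing telescopically. This is the classical approach for normalized methods, but with careful attention to how the cosine term $\phit$ emerges naturally from the normalized update.

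\textbf{Setup via smoothness.} First I would apply the $\sm$-smoothness assumption \lsmooth, which gives the quadratic upper bound
\begin{align*}
	\obj(\xtp) \leq \obj(\xt) + \nFxt^\top (\xtp - \xt) + \frac \sm 2 \norm{\xtp - \xt}^2.
\end{align*}
Since \nsgd\ uses the update $\xtp = \xt - \sst \frac{\gt}{\norm{\gt}}$, the displacement is $\xtp - \xt = -\sst \frac{\gt}{\norm{\gt}}$, whose norm is exactly $\sst$ (the whole point of normalization). Substituting, the inner-product term becomes $-\sst \frac{\nFxt^\top \gt}{\norm{\gt}}$ and the quadratic term becomes $\frac \sm 2 \sst^2$.

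\textbf{Extracting the cosine.} The key observation is that
\begin{align*}
	\frac{\nFxt^\top \gt}{\norm{\gt}} = \frac{\nFxt^\top \gt}{\nnFxt \norm{\gt}} \cdot \nnFxt = \phit \nnFxt,
\end{align*}
by definition of $\phit$. This rewrites the descent inequality as
\begin{align*}
	\obj(\xtp) \leq \obj(\xt) - \sst \phit \nnFxt + \frac \sm 2 \sst^2.
\end{align*}
Rearranging gives $\sst \phit \nnFxt \leq \obj(\xt) - \obj(\xtp) + \frac \sm 2 \sst^2$.

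\textbf{Telescoping and concluding.} Summing this inequality over $\ii = 1, \dots, \li$, the function-value differences telescope to $\obj(\xfin) - \obj(\iterationx{\li+1})$. Using \lb, we bound this by $\obj(\xfin) - F^* \leq \Dz$ (discarding the nonnegative $-\obj(\iterationx{\li+1}) + F^* \geq 0$ term, or more precisely $\obj(\xfin) - \obj(\iterationx{\li+1}) \leq \obj(\xfin) - F^* \leq \Dz$). This yields exactly
\begin{align*}
	\iSum \sst \phit \nnFxt \leq \Dz + \frac \sm 2 \iSum \sst^2,
\end{align*}
as claimed. I do not anticipate a genuine obstacle here, since this is a deterministic per-step inequality requiring no probabilistic arguments; the only subtlety is the implicit assumption that $\gt \neq 0$ so that the normalized update and $\phit$ are well-defined, and cleanly identifying the cosine factor. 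The real work of this paper lies in controlling the sum $\sum \sst \phit \nnFxt$ from \emph{below} using bounds on $\phit$ (via \Cref{lem:app.expected_angle}), which is deferred to the later convergence proofs rather than this descent lemma.
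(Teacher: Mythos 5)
Your proposal is correct and follows exactly the same route as the paper's proof: apply the $\sm$-smoothness descent inequality to the normalized update (whose displacement has norm $\sst$), rewrite the inner-product term as $\phit \nnFxt$, telescope over $\ii \in [\li]$, and bound the telescoped difference by $\Dz$ via \lb. Your closing remarks about well-definedness of $\phit$ and the deferral of lower-bounding $\phit$ to later results also match the paper's treatment.
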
 

\begin{proof}
	By the definition of $\xtp$, \lsmooth\ implies
	\begin{align*}
		\obj\atxtp - \obj \atxt
		\leq - \sst \nFxt^\top \frac{\gt}{\norm{\gt}} + \frac \sm 2 \sst^2
		= - \sst \frac{\nFxt^\top \gt}{\nnFxt \norm{\gt}} \nnFxt + \frac \sm 2 \sst^2.
	\end{align*}
	Summing up over $t \in [\li]$ and telescoping now yields
	\begin{align*}
		\obj^* - \obj\pare{\xfin} \leq \obj \pare{\iterationx {\li + 1}} - \obj\pare{\xfin}
		&\leq -\iSum \sst \phit \nnFxt + \frac \sm 2 \iSum \sst^2,
	\end{align*}
	where we used \lb\ in the first inequality. This completes the proof.
\end{proof}

Thus, if we could guarantee that the angle between the gradient oracle and true gradient remains bounded away from zero, we would be done. Since this can however, even in expectation, not be guaranteed, we need a more detailed analysis to prove our results.


\subsection{In-Expectation Upper-Bounds}
\label{sec:app.missing.exp}

To prove our in-expectation results, we start with a unified analysis for normalized algorithms. This result does not specify the exact gradient estimator, allowing to incorporate different gradient estimators and noise assumptions afterward. This result was first derived by \citet{MomentumImprovesNSGD_Cutkosky_2020} in a slightly different formulation. 

\begin{proposition}[{c.f.\ \citet[Lemma 2]{MomentumImprovesNSGD_Cutkosky_2020}}] \label{prop:app.missing.exp.unified_expectation}
	Assume \lb, \lsmooth\ and $\infty > \sigt \coloneqq \Exp{\norm{\gt - \nFxt}}$. Then the iterates $\pare{\xt}_{\ii \in \Ngeq}$ generated by \nsgd\ satisfy
	\begin{align*}
		\iSum \frac{\sst}{\tauSum \sstau} \Exp{\norm{\nFxt}}
		&\leq \frac{\Dz + \frac \sm 2 \iSum \sst^2 + 2 \iSum \sst \sigt}{\tauSum \sstau}.
	\end{align*}
\end{proposition}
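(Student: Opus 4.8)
The plan is to combine the Descent Lemma with the Expected Angle Bound, and then take expectations carefully, controlling the noise term via the cosine $\phi_t$. I would start from the Descent Lemma (\Cref{lem:app.descent_lemma}), which gives
\[
	\iSum \sst \phit \nnFxt \leq \Dz + \frac \sm 2 \iSum \sst^2.
\]
The obstacle here is that $\phi_t$ can be negative, so this inequality alone does not control $\sum_t \sst \Expnorm{\nFxt}$. The key idea is to rewrite $\phit \nnFxt$ in a way that separates a ``clean'' term $\nnFxt$ from a noise correction. Using \Cref{lem:app.remove_normalisation} with $a \gets \nFxt$ and $b \gets \gt$, I get the pointwise bound $\phit \nnFxt = \frac{\nFxt^\top \gt}{\norm{\gt}} \geq \nnFxt - 2\norm{\nFxt - \gt}$, valid whenever $\gt \neq 0$.

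Next I would take conditional expectations. The natural step is to condition on $\Fctm = \sigma(\iterationg \fin, \dots, \gtm)$, under which $\xt$ is measurable, so $\nFxt$ is determined. Applying \Cref{lem:app.expected_angle} (the Expected Angle Bound) with $\mu \gets \nFxt$ and $X \gets \gt$, whose conditional noise is $\sigt = \Exp{\norm{\gt - \nFxt}}$, yields $\Exp{\phit \mid \Fctm} \geq 1 - 2\frac{\sigt}{\nnFxt}$ when $\nFxt \neq 0$, hence
\[
	\Exp{\phit \nnFxt \mid \Fctm} \geq \nnFxt - 2\sigt.
\]
(The degenerate case $\nFxt = 0$ must be handled separately, but there the left side is $0$ and the claimed inequality $0 \geq -2\sigt$ holds trivially.) Taking total expectation and using the tower property then gives $\Exp{\sst \phit \nnFxt} \geq \sst \Exp{\nnFxt} - 2 \sst \sigt$.

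Combining these pieces, I would sum over $t \in [\li]$, substitute into the Descent Lemma after taking its expectation, and rearrange:
\[
	\iSum \sst \Expnorm{\nFxt} \leq \Dz + \frac \sm 2 \iSum \sst^2 + 2 \iSum \sst \sigt.
\]
Dividing both sides by $\tauSum \sstau$ gives exactly the claimed bound. The main obstacle I anticipate is the measure-theoretic bookkeeping: one must verify that $\xt$ (and hence $\nFxt$) is $\Fctm$-measurable, that \Cref{lem:app.expected_angle} can be applied conditionally (it is stated for an unconditional expectation, so I would either invoke it with the regular conditional measure or re-derive it by dividing the pointwise Lemma~\ref{lem:app.remove_normalisation} bound by $\nnFxt$ and taking conditional expectations directly), and that the almost-sure assumption $\gt \neq 0$ needed for normalization holds. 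Working directly from \Cref{lem:app.remove_normalisation} at the conditional level sidesteps most of these issues, since it avoids dividing by $\norm{\gt}$ inside an expectation and cleanly produces the $-2\sigt$ correction. I would also note that no unbiasedness or decreasing-stepsize assumption is used, matching the generality emphasized for the high-probability counterpart.
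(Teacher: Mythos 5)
Your proposal is correct and follows essentially the same route as the paper: the Descent Lemma (\Cref{lem:app.descent_lemma}) combined with the pointwise bound of \Cref{lem:app.remove_normalisation} applied to $a \gets \nFxt$, $b \gets \gt$, then expectation, summation, and division by $\tauSum \sstau$. The only difference is your detour through conditional expectations given $\Fctm$, which the paper avoids --- since $\sigt$ is defined here as the \emph{unconditional} expectation $\Exp{\norm{\gt - \nFxt}}$, one can simply take total expectation of the pointwise inequality directly, so your conditioning (and the measure-theoretic bookkeeping it entails) is harmless but unnecessary.
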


Note that for constant parameters $\sst \equiv \ssi$ and $\varsym_t \equiv \varsym$ this result reduces to
\begin{align}\label{eq:app.missing.exp.unified_exp.constant}
	\frac 1 \li \iSum \Exp{\norm{\nFxt}}
	&\leq \frac{\Dz}{\ssi \li} + \frac{\ssi \sm}{2} + 2 \varsym.
\end{align}
We provide a slightly different proof of Proposition \ref{prop:app.missing.exp.unified_expectation} when compared to \citep{MomentumImprovesNSGD_Cutkosky_2020} below.

\begin{proof}
	Let $\phit \coloneqq \frac{\nFxt^\top \gt}{\nnFxt \norm{\gt}}$ denote the cosine between $\nFxt$ and $\gt$. Then, by \Cref{lem:app.descent_lemma}, we have
	\begin{align}\label{eq:app.unified_expectation.descent_inequality}
		\iSum \sst \phit \nnFxt \leq \Dz + \frac \sm 2 \iSum \sst^2.
	\end{align}
	Next we apply \Cref{lem:app.remove_normalisation} to get
	\begin{align*}
		\Exp{\phit \nnFxt} 
		\geq \Exp{\nnFxt - 2 \norm{\gt - \nFxt}}
		\geq \Exp{\nnFxt} - 2 \sigt,
	\end{align*}
	where we applied our assumption $\sigt \geq \Exp{\norm{\gt - \nFxt}}$ in the last inequality.
	Therefore, by taking expectations in \eqref{eq:app.unified_expectation.descent_inequality}, we get
	\begin{align*}
		\iSum \sst \Exp{\nnFxt} \leq \Dz + \frac \sm 2 \iSum \sst^2 + 2 \iSum \sst \sigt.
	\end{align*}
	Dividing by $\tauSum \sstau$ yields the claim.
\end{proof}

\subsubsection{Proof of Proposition 1}
\label{sec:app.missing.exp.prop_1}

Now we are ready to prove the full version of \Cref{prop:main.param_agnostic_general}.

\begin{proof}[Proof of \Cref{prop:main.param_agnostic_general}.]
	To shorten the notation we write $\ssl \coloneqq \ssi \li^{-r}$ and $\Bl \coloneqq \ceil*{\max\set{1, B\li^q}}$. Remember, that we are considering the mini-batch gradient-estimator
	\begin{align*}
		\gt = \frac 1 {\Bl} \sum_{j = 1}^{\Bl} \nabla \ora \pare{\xt, \xit^{(j)}}.
	\end{align*}
	We start by controlling the (conditional) expected deviation of $\gt$ from $\nFxt$ using \Cref{lem:app.von_Bahr_and_Essen}. Let $x \in \R^d$ and define $X_j(x) \coloneqq \nabla f \pare{x, \xitj} - \nF \pare x$ for all $j \in \left[\Bl\right]$. Now note that $X_1(x), \dots, X_{\Bl}(x)$ are independent random variables with mean zero and hence a Martingale Difference Sequence (MDS). Furthermore note that $\Exp{\norm{X_j(x)}^p} \leq \sigma^p$ by \pBCM\ and we can hence apply \Cref{lem:app.von_Bahr_and_Essen} to get
	\begin{align}\label{eq:app.missing.exp.param_agnostic_general.pointwise}
		g(x) 
		\coloneqq\Exp{\norm{\sum_{j=1}^{\Bl} X_j(x)}^p}
		\leq 2 \sum_{j=1}^{\Bl} \Exp{\norm{X_j(x)}^p}
		\leq 2 \Bl \varsym^p.
	\end{align}
	
	Next we calculate
	\begin{align}\label{eq:app.missing.exp.param_agnostic_general.cond_exp_dev.temp}
	\begin{split}
		\Exp[\xt]{\norm{\gt - \nFxt}} 
		= \Exp[\xt]{\norm{\frac 1 {\Bl} \sum_{j=1}^{\Bl} \pare{\nf{\xt, \xit^{(j)}} - \nFxt}}}\\
		\leq \frac 1 {\Bl} \Exp[\xt]{\norm{ \sum_{j=1}^{\Bl} \pare{\nf{\xt, \xit^{(j)}} - \nFxt}}^p}^{\nicefrac 1 p}
	\end{split}
	\end{align}
	where we applied Jensen in the last inequality. Next define
	\begin{align*}
		Y = \pare{\xitGen{1}, \dots, \xitGen{\Bl}} \quad \text{and} \quad
		h\pare{\xt,Y} &= \norm{ \sum_{j=1}^{\Bl} \pare{\nf{\xt, \xitj} - \nFxt}}^p.
	\end{align*}
	and note that $\xt$ and $Y$ are independent. Hence we may apply \Cref{lem:app.technical.cond_exp} which yields
	\begin{align}\label{eq:app.missing.exp.param_agnostic_general.cond_exp_dev}
		\Exp[\xt]{\norm{\gt - \nFxt}} 
		\stackrel{\eqref{eq:app.missing.exp.param_agnostic_general.cond_exp_dev.temp}}{\leq}
		\frac 1 {\Bl} \Exp[\xt]{h\pare{\xt,Y}}^{\nicefrac 1 p}
		\stackAlign{\text{Lem. }\ref{lem:app.technical.cond_exp}}{=}
		\frac 1 {\Bl} g \pare{\xt}^{\nicefrac 1 p}
		\stackrel{\eqref{eq:app.missing.exp.param_agnostic_general.pointwise}}{\leq}
		2 \frac{\sigma}{\Bl^{\frac{p-1} p }}
	\end{align}
	almost surely. By the tower property we get $\Expnorm{\gt - \nFxt} = \Exp{\Exp[\xt]{\norm{\gt - \nFxt}}} \leq 2 \sigma \Bl^{- \frac{p-1} p }$ and plugging into \eqref{eq:app.missing.exp.unified_exp.constant} yields
	\begin{align*}
		\frac 1 \li \iSum \Expnorm{\nFxt}
		\leq \frac{\Dz}{\ssl \li} + \frac{\ssl \sm}{2} + \frac{4 \varsym}{\Bl^{\frac{p-1}{p}}}.
	\end{align*}
	Using the definitions of $\ssl$ and $\Bl$ we get
	\begin{align}\label{eq:app.missing.exp.param_free.rate}
		\frac 1 \li \iSum \Expnorm{\nFxt}
		\leq \frac{\Dz}{\ssi \li^{1-r}} + \frac{\ssi \sm}{2\li^r} + \frac{4 \varsym}{\ceil*{\max\set{1, B\li^q}}^{\frac{p-1}{p}}}
		\leq \frac{\Dz}{\ssi \li^{1-r}} + \frac{\ssi \sm}{2\li^r} + \frac{4 \varsym}{\max\set{1, B\li^q}^{\frac{p-1}{p}}}
	\end{align}
	This implies an iteration complexity of
	\begin{align*}
		\Oc\pare{
			\pare{\frac{\Dz}{\ssi \eps}}^{\frac 1 {1-r}} 
			+ \pare{\frac{\ssi \sm}{\eps}}^{\frac 1 r} 
			+ \frac 1 {B^{\nicefrac 1 q}}\pare{\frac{\varsym}{\eps}}^{\frac{p}{q\pare{p-1}}}
		}
	\end{align*}
	and hence a sample complexity of
	\begin{align}\label{eq:app.missing.exp.param_free.sample_complexity}
        \Oc\pare{
        \pare{\frac{\Dz}{\ssi \eps}}^{\frac 1 {1-r}} 
        + \pare{\frac{\ssi \sm}{\eps}}^{\frac 1 r}
        + B\pare{\frac{\Dz}{\ssi \eps}}^{\frac {1+q} {1-r}} 
        + B\pare{\frac{\ssi \sm}{\eps}}^{\frac {1+q} r}
        + \frac 1 {B^{\frac 1 q}} \pare{\frac{\varsym}{\eps}}^{\frac{p\pare{1+q}}{q\pare{p-1}}}
        }.
    \end{align}
	This completes the proof.
\end{proof}

\subsubsection{Proof of Corollary 3}
\label{sec:app.missing.exp.cor_3}

Finally we provide a slightly more refined analysis to prove \Cref{cor:main.optimal_complexity}.

\begin{proof}[Proof of \Cref{cor:main.optimal_complexity}]
	Applying \Cref{prop:main.param_agnostic_general} to our choice of parameters yields
	\begin{align*}
		\frac 1 \li \iSum \Expnorm{\nFxt} 
		\leq 2 \frac{\sqrt{\Dz \sm}}{\sqrt \li} + \frac{4\sigma}{\max \set{1,\pare{\frac{\varsym^2 \li}{\Dz \sm}}^{\frac{p}{2p-2}}}^{\frac{p-1} p}}.
	\end{align*}
	We proceed with a case distinction.\\
	\textbf{Case 1: $\frac{\varsym^2 \li}{\Dz \sm} \leq 1$.} In this case we get $\varsym \leq \frac{\sqrt{\Dz \sm}}{\sqrt \li}$ and hence
	\begin{align*}
		{4\sigma}\pare{\max \set{1,\pare{\frac{\varsym^2 \li}{\Dz \sm}}^{\frac{p}{2p-2}}}}^{-\frac{p-1} p}
		= 4 \varsym
		\leq 4 \frac{\sqrt{\Dz \sm}}{\sqrt \li}.
	\end{align*}
	\textbf{Case 2: $\frac{\varsym^2 \li}{\Dz \sm} > 1$.} We calculate
	\begin{align*}
		4\sigma \pare{\max \set{1,\pare{\frac{\varsym^2 \li}{\Dz \sm}}^{\frac{p}{2p-2}}}}^{-\frac{p-1} p}
		= 4 \varsym \pare{\frac{\varsym^2 \li}{\Dz \sm}}^{-\frac 1 2}
		= 4 \frac{\sqrt{\Dz \sm}}{\sqrt \li}.
	\end{align*}
	This implies an iteration complexity of $\Oc\pare{\Dz\sm\eps^{-2}}$ and hence a sample complexity of
	\begin{align*}
		\Oc\pare{\Dz \sm \eps^{-2} \cdot \ceil*{\max \set{1,\pare{\frac{ \varsym^2}{\eps^2}}^{\frac{p}{2p-2}}}}}
		= \Oc\pare{\frac{\Dz \sm}{\eps^2} + \frac{\Dz \sm}{\eps^2}\pare{\frac{\varsym}{\eps}}^{\frac p {p-1}} }.
	\end{align*}
	
\end{proof}


\subsection{High-Probability Upper-Bounds}
\label{sec:app.missing.hp}

This subsection contains the proofs for our high-probability results. We start off with the proof of \Cref{thm:main.unified_hp}. The proof hinges on the observation that $\frac{\nFxt^\top \gt}{\nnFxt \norm{\gt}} \in [-1,1]$ is bounded and hence concentrates well. This will allow us to apply \Cref{lem:app.technical.conc_ineq_MDS} to get the mild $\logood$ dependence. 

\subsubsection{Proof of Theorem 4}
\label{sec:app.missing.hp.thm_4}

\begin{proof}[Proof of \Cref{thm:main.unified_hp}.]
	Let $\phit \coloneqq \frac{\nFxt^\top \gt}{\nnFxt \norm{\gt}}$ denote the cosine between $\nFxt$ and $\gt$. Then, by \Cref{lem:app.descent_lemma}, we have
	\begin{align*}
		\iSum \sst \phit \nnFxt \leq \Dz + \frac \sm 2 \iSum \sst^2.
	\end{align*}
	Next, we use the fact that $\phit$ is bounded and hence sharply concentrates around its (conditional) expectation. Formally, let $\psit \coloneqq \Exp[\Fctm]{\phit}$ and note that $\Dt \coloneqq -\sst \pare{\phit - \psit}\nnFxt$ is a martingale difference sequence with respect to $\pare{\Fct}_{\ii \in \N}$. Furthermore, noting that
	\begin{align*}
		\exp\pare{\frac{\Dt^2}{4\sst^2\nnFxt^2}} = \exp\pare{\frac{\pare{\phit-\psit}^2}{4}} \leq e
	\end{align*}
	implies that we may apply \Cref{lem:app.technical.conc_ineq_MDS} with $G_t^2 = 4\sst^2\nnFxt^2$. Doing so yields, for all $\lambda > 0$,
	\begin{align*}
		\iSum\sst \pare{\psit - 3 \lambda \sst \nnFxt} \nnFxt 
		\leq \Dz + \frac \sm 2 \iSum \sst^2 + \frac 1 \lambda \logood
	\end{align*}
	with probability at least $1-\delta$. Using \lsmooth\ we get $\nnFxt \leq \norm{\nF \pare{\xfin}} + \sm \sum_{\tau = \fin}^{\ii - 1} \sstau$ and hence choosing $\lambda \coloneqq \frac 1 {6 \pare{\ssmax \norm{\nF\pare{\xfin}} + C_\li \sm}}$ yields, with probability at least $1-\delta$,
	\begin{align}\label{eq:unified_HP_theorem_critical_ineq}
		\iSum \sst \pare{\psit - \frac 1 2}\nnFxt
		\leq \Dz + \frac \sm 2 \iSum \sst^2 + 6  \pare{\ssmax\norm{\nF\pare{\xfin}} + C_\li \sm} \logood.
	\end{align}
	Finally we are left with the challenge of guaranteeing that $\psit$ is \emph{large enough}. Therefore we use \Cref{lem:app.remove_normalisation} to get $\psit \nnFxt = \Exp[\Fctm]{\frac{\nFxt^\top\gt}{\norm{\gt}}} \geq \nnFxt - 2 \Exp[\Fctm]{\norm{\gt - \nFxt}} = \nnFxt - 2 \sigt$ and hence
	\begin{align*}
		\frac 1 2 \iSum \sst \nnFxt
		\leq \Dz + \frac \sm 2 \iSum \sst^2 + 2 \iSum \sst \sigt + 6 \pare{\ssmax\norm{\nF\pare{\xfin}} + C_\li \sm} \logood.
	\end{align*}
	Dividing by $\frac 1 2 \tauSum \sstau$ yields the claim.
\end{proof}

\subsubsection{Proof of Corollary 5}
\label{sec:app.missing.hp.cor_5}

Next we apply \Cref{thm:main.unified_hp} to derive the high-probability result for tuned \batchNsgdName.

\begin{proof}[Proof of \Cref{cor:main.optimal_hp}]
To shorten the notation we write $\sst \equiv \ssi$ and $\Bt \equiv B$. First note that $\xt$ is $\sigma\pare{\xt}\subseteq \Fctm$ measurable and $\xitGen 1, \dots , \xitGen B$ are independent of $\Fctm$. In particular we have $\Exp[\Fctm]{\norm{\gt - \nFxt}} = \Exp[\xt]{\norm{\gt - \nFxt}}$ and hence may apply \eqref{eq:app.missing.exp.param_agnostic_general.cond_exp_dev} to get
\begin{align}\label{eq:app.missing.hp.batch_cond_mom_deviation_bound}
		\Exp[\Fctm]{\norm{\gt - \nFxt}} 
		= \Exp[\xt]{\norm{\gt - \nFxt}}
		\stackrel{\eqref{eq:app.missing.exp.param_agnostic_general.cond_exp_dev}}{\leq}
		\frac{2\varsym}{B^{\frac{p-1}{p}}},
\end{align}
Plugging into \Cref{thm:main.unified_hp} now yields
\begin{align}\label{eq:app.missing.optimal_hp.prelim}
\begin{split}
	\frac 1 \li \iSum \nnFxt
	\leq&\ \frac{2\Dz}{\ssi \li} + \Aa + 8\varsym B^{-\frac{p-1}p} + 12\pare{\frac{\norm{\nF \pare{\xfin}}}{\li} + \ssi \sm}\logood\\
	=&\ 2 \frac{\sqrt{\Dz \sm}}{\sqrt \li} + \frac{\sqrt{\Dz \sm}}{\sqrt{\li}}
	+ 8\varsym B^{-\frac{p-1}p} + 12\pare{\frac{\norm{\nF \pare{\xfin}}}{\li} + \frac{\sqrt{\Dz \sm}}{\sqrt{\li}}}\logood \\
	\leq&\ \pare{3 + 30 \logood} \frac{\sqrt{\Dz \sm}}{\sqrt\li} + 8\varsym B^{-\frac{p-1}p},
\end{split}
\end{align}
where we used $\norm{\nF \pare{\xfin}} \leq \sqrt{2\Dz\sm}$ in the last inequality. We now proceed with a case distinction.\\
\textbf{Case 1: $B = 1$.} This implies $\varsym \leq \sqrt{\frac{\Dz \sm} \li}$ and hence
\begin{align*}
	\frac 1 \li \iSum \nnFxt
	\leq \pare{11 + 30 \logood} \frac{\sqrt{\Dz \sm}}{\sqrt \li}.
\end{align*}
\textbf{Case 2: $B = \pare{\frac{\varsym^2\li}{\Dz \sm}}^{\frac{p}{2p-2}}$.} In this case we have $\varsym B^{\frac{1-p}p} = \sqrt{\frac{\Dz \sm}{\li}}$ and plugging into \eqref{eq:app.missing.optimal_hp.prelim} yields 
\begin{align*}
	\frac 1 \li \iSum \nnFxt
	\leq \pare{11 + 30 \logood} \frac{\sqrt{\Dz \sm}}{\sqrt \li}.
\end{align*}
This finishes the convergence result. To prove the oracle complexity, note that each iteration requires $1$ and $\pare{\frac{\varsym^2\li}{\Dz \sm}}^{\frac{p}{2p-2}}$ oracle calls in Case 1 and 2 respectively. To reach an $\eps$-stationary point, $\Oc\pare{\Dz \sm \eps^{-2}\logood^2}$ iterations are required. Plugging into the oracle complexity per iteration yields the second claim.
\end{proof}

\subsubsection{Parameter-Free High-Probability}
\label{sec:app.missing.hp.param_free}

Similar to \Cref{prop:main.param_agnostic_general}, we can also derive a parameter-free high-probability result for \batchNsgd.

\begin{corollary}
    Assume \lb, \lsmooth\ and \pBCM\ with $p \in (1, 2]$. Furthermore let $\ssi, B, q > 0$ and $\delta, r \in (0,1)$. Then the iterates generated by \batchNsgd\ with parameters $\sst \equiv \ssi \li^{- r}$ and $\Bt \equiv \ceil{\max\set{1, B\li^q}}$ satisfy, with probability at least $1 - \delta$,
    \begin{align*}
        \frac 1 \li \iSum \nnFxt \leq 
        \frac{2\Dz}{\ssi \li^{1-r}} + \frac{\ssi \sm}{\li^r} \pare{1 + 12\logood} + 17\frac{\sqrt{\Dz \sm}}{\li}\logood + \frac{8\varsym}{\max\set{1, B\li^q}^{\frac{p-1}p}}
	\end{align*}
	In particular, the sample complexity is bounded by $\Oct \pare{\pare{\frac \Dz \eps}^{\frac{1+q}{1-r}} + \pare{\frac \sm \eps}^{\frac{1+q}{r}} + \pare{\frac{\varsym}{\eps}}^{\frac{p\pare{1+q}}{q\pare{p-1}}}}$.
\end{corollary}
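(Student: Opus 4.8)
The plan is to mirror the proof of \Cref{cor:main.optimal_hp}, but keeping the step-size and batch-size exponents $r$ and $q$ general instead of fixing them to their tuned values. The two ingredients are the unified high-probability bound of \Cref{thm:main.unified_hp} and the conditional noise estimate already established in \eqref{eq:app.missing.hp.batch_cond_mom_deviation_bound}. To match the notation of the earlier proofs I would write $\ssl \coloneqq \ssi \li^{-r}$ and $\Bl \coloneqq \ceil{\max\set{1, B\li^q}}$, so that the per-iteration oracle cost is $\Bl$.

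First I would verify the hypotheses of \Cref{thm:main.unified_hp}. Since $\xt$ is $\Fctm$-measurable while the fresh samples $\xit^{(1)}, \ldots, \xit^{(\Bl)}$ are independent of $\Fctm$, the conditional expectation collapses to conditioning on $\xt$, so that $\sigt = \Exp[\Fctm]{\norm{\gt - \nFxt}} = \Exp[\xt]{\norm{\gt - \nFxt}} \leq 2\varsym \Bl^{-\frac{p-1}{p}}$ by \eqref{eq:app.missing.exp.param_agnostic_general.cond_exp_dev}, exactly as in \eqref{eq:app.missing.hp.batch_cond_mom_deviation_bound}. In particular $\sigt < \infty$, so \Cref{thm:main.unified_hp} applies.

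Next I would substitute the constant step-size $\sst \equiv \ssl$ into \Cref{thm:main.unified_hp} and evaluate each aggregate: $\tauSum \sstau = \ssi \li^{1-r}$, $\iSum \sst^2 = \ssi^2 \li^{1-2r}$, and the weights reduce to $w_t = \nicefrac 1 \li$, so the left-hand side becomes $\frac 1 \li \iSum \nnFxt$. For the high-probability term I would use $\ssmax = \ssi \li^{-r}$ together with the crude bound $C_\li = \max_\ii \sst \sum_{\tau=\fin}^{\ii-1} \sstau \leq \ssi^2 \li^{1-2r}$. Plugging these in, bounding $\iSum \sst \sigt \leq 2\varsym \ssi \li^{1-r}\Bl^{-\frac{p-1}{p}}$, and dividing through by $\tauSum \sstau$ yields, with probability at least $1-\delta$,
\begin{align*}
	\frac 1 \li \iSum \nnFxt \leq \frac{2\Dz}{\ssi \li^{1-r}} + \frac{\ssi \sm}{\li^r} + \frac{8\varsym}{\Bl^{\frac{p-1}{p}}} + \frac{12\norm{\nF\pare{\xfin}}\logood}{\li} + \frac{12\ssi\sm}{\li^r}\logood.
\end{align*}
The claimed bound then follows from three elementary simplifications: grouping the two $\ssi\sm\li^{-r}$ terms into $\frac{\ssi\sm}{\li^r}(1+12\logood)$; using $\norm{\nF\pare{\xfin}} \leq \sqrt{2\Dz\sm}$ (a consequence of \lsmooth\ and \lb) together with $12\sqrt 2 \leq 17$ for the initial-gradient term; and using $\Bl \geq \max\set{1, B\li^q}$ for the noise term.

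Finally, for the sample complexity I would bound, exactly as in the proof of \Cref{prop:main.param_agnostic_general}, the number of iterations needed to drive each term below $\eps$ (absorbing the $\logood$ factors into $\Oct$): the first two terms give $\Oc\pare{(\nicefrac \Dz \eps)^{\nicefrac{1}{1-r}}}$ and $\Oct\pare{(\nicefrac \sm \eps)^{\nicefrac 1 r}}$ iterations, the third term $\frac{17\sqrt{\Dz\sm}}{\li}\logood$ is lower order in $\li$ and hence dominated by the first two, and the batch term gives $\Oc\pare{B^{-\nicefrac 1 q}(\nicefrac \varsym \eps)^{\frac{p}{q(p-1)}}}$ iterations. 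Multiplying the iteration count by the per-iteration cost $\Bl$ reproduces the stated $\Oct\pare{(\nicefrac \Dz \eps)^{\frac{1+q}{1-r}} + (\nicefrac \sm \eps)^{\frac{1+q}{r}} + (\nicefrac \varsym \eps)^{\frac{p(1+q)}{q(p-1)}}}$. The computation is essentially mechanical once \Cref{thm:main.unified_hp} and the conditional noise bound are in hand; the only real care is bookkeeping --- correctly evaluating $C_\li$ and $\ssmax$ under the constant step-size and tracking the numerical constants (the passage from $12\sqrt 2$ to $17$ and the $(1+12\logood)$ grouping). The main obstacle, already resolved upstream, is the conditional estimate $\sigt \leq 2\varsym \Bl^{-\frac{p-1}{p}}$: \Cref{thm:main.unified_hp} controls $\Exp[\Fctm]{\norm{\gt-\nFxt}}$ rather than its unconditional counterpart, so the measurability/independence reduction to $\Exp[\xt]{\cdot}$ is the step that makes the whole argument go through.
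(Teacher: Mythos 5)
Your proposal is correct and follows the paper's own proof essentially step for step: the same measurability/independence reduction giving $\sigt = \Exp[\xt]{\norm{\gt - \nFxt}} \leq 2\varsym \ceil{\max\set{1,B\li^q}}^{-\frac{p-1}{p}}$, the same substitution of the constant parameters into \Cref{thm:main.unified_hp} with identical evaluations of $\ssmax$ and $C_\li$, and the same three simplifications (grouping into $1+12\logood$, the bound $\norm{\nF\pare{\xfin}} \leq \sqrt{2\Dz\sm}$ with $12\sqrt{2}\leq 17$, and dropping the ceiling), arriving at exactly the paper's intermediate display \eqref{eq:app.missing.hp.param_free.prelim}.

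The one place you are looser than the paper is in the sample-complexity step, where you dismiss the term $17\frac{\sqrt{\Dz\sm}}{\li}\logood$ as ``lower order in $\li$ and hence dominated by the first two.'' Faster decay in $\li$ does not by itself imply a smaller iteration threshold: comparing $c_A/\li$ with $c_B/\li^{1-r}$, the first threshold $c_A/\eps$ can exceed the second $\pare{c_B/\eps}^{\frac{1}{1-r}}$ whenever $c_A$ is large and $c_B$ is small, so an argument relating the constants is needed. The claim does hold here, because $\sqrt{\Dz\sm}$ is the geometric mean of the constants appearing in the other two terms: by AM--GM and the fact that both exponents $\frac{1}{1-r}$ and $\frac{1}{r}$ are at least $1$, one has $\frac{\sqrt{\Dz\sm}}{\eps} = \sqrt{\frac{\Dz}{\ssi\eps}\cdot\frac{\ssi\sm}{\eps}} \leq \frac{1}{2}\pare{\frac{\Dz}{\ssi\eps} + \frac{\ssi\sm}{\eps}} \leq \frac{1}{2}\pare{\pare{\frac{\Dz}{\ssi\eps}}^{\frac{1}{1-r}} + \pare{\frac{\ssi\sm}{\eps}}^{\frac{1}{r}}} + 1$. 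This is precisely the Young's-inequality step the paper inserts after \eqref{eq:app.missing.hp.param_free.prelim}; with that one line added, your derivation is complete.
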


\begin{proof}
    To shorten the notation we write $\sst \equiv \ssl$ and $\Bt \equiv \Bl$. First we apply \eqref{eq:app.missing.hp.batch_cond_mom_deviation_bound} to get $\Exp[\Fctm]{\norm{\gt - \nFxt}} \leq \frac{2\varsym}{B^{\frac{p-1}{p}}}$ and plugging into \Cref{thm:main.unified_hp} yields
\begin{align}\label{eq:app.missing.hp.param_free.prelim}
\begin{split}
	\frac 1 \li \iSum \nnFxt
	\leq&\ \frac{2\Dz}{\ssl \li} + \ssl \sm + 8\varsym \Bl^{-\frac{p-1}p} + 12\pare{\frac{\norm{\nF \pare{\xfin}}}{\li} + \ssl \sm}\logood\\
	\leq&\ \frac{2\Dz}{\ssi \li^{1-r}} + \frac{\ssi \sm}{\li^r} \pare{1 + 12\logood}
	+ \frac{8\varsym}{\ceil{\max\set{1, B\li^q}}^{\frac{p-1}p}} + 17\frac{\sqrt{\Dz \sm}}{\li}\logood \\
	\leq&\ \frac{2\Dz}{\ssi \li^{1-r}} + \frac{\ssi \sm}{\li^r} \pare{1 + 12\logood} + 17\frac{\sqrt{\Dz \sm}}{\li}\logood
	+ \frac{8\varsym}{\max\set{1, B\li^q}^{\frac{p-1}p}}
\end{split}
\end{align}
where we used $\norm{\nF \pare{\xfin}} \leq \sqrt{2\Dz\sm}$ in the second inequality. This corresponds to an iteration complexity of
\begin{align*}
    \Oct\pare{
    \pare{\frac{\Dz}{\ssi \eps}}^{\frac 1 {1-r}} 
    + \pare{\frac{\ssi \sm}{\eps}}^{\frac 1 r}
    + \frac 1 {B^\frac 1 q} \pare{\frac{\varsym}{\eps}}^{\frac{p}{q\pare{p-1}}}
    },
\end{align*}
where we used $\frac{\sqrt{\Dz \sm}}{\eps} \leq \pare{\frac{\sqrt \Dz}{\ssi \eps}}^{\frac 1 {1-r}} + \pare{\frac{\ssi\sqrt{\sm }}{\eps}}^{\frac 1 r} = \Oc\pare{\pare{\frac{\Dz}{\ssi \eps}}^{\frac 1 {1-r}} + \pare{\frac{\ssi\sm}{\eps}}^{\frac 1 r}}$ by Young's inequality. Finally, this implies a sample complexity of
\begin{align*}
    \Oct\pare{
    \pare{\frac{\Dz}{\ssi \eps}}^{\frac 1 {1-r}} 
    + \pare{\frac{\ssi \sm}{\eps}}^{\frac 1 r}
    + B\pare{\frac{\Dz}{\ssi \eps}}^{\frac {1+q} {1-r}} 
    + B\pare{\frac{\ssi \sm}{\eps}}^{\frac {1+q} r}
    + \frac 1 {B^{\frac 1 q}} \pare{\frac{\varsym}{\eps}}^{\frac{p\pare{1+q}}{q\pare{p-1}}}
    }.
\end{align*}
\end{proof}

}
\clearpage

\section{UPPER-BOUNDS FOR NSGD WITH MOMENTUM}
\label{sec:app.nsgdm}
In this section we discuss the version of our results for NSGD with momentum (\nsgdmName), i.e., \nsgd\ with the gradient estimator
\begin{align}\label{eq:app.nsgdm}
    \gt = \momt \gtm + \pare{1-\momt}\nf{\xt,\xit},
\end{align}
where $\iterationg 0 = 0$. 
Throughout this section we use the notation $\alpha_t := 1 - \beta_t$ an  $\momprod a b \coloneqq \prod_{\kappa = a}^b \iterationmom \kappa$. 

We first derive a deviation bound for $\gt$ from $\nFxt$, similar to \eqref{eq:app.missing.exp.param_agnostic_general.cond_exp_dev} but for the momentum estimator, generalising the bound in \citep{MomentumImprovesNSGD_Cutkosky_2020} to $p < 2$.

\begin{lemma}\label{lem:app.nsgdm.moment_deviation_bound}
	Let $\iterationmom \fin = 0$ and assume \lsmooth, \pBCM\ with $p \in (1, 2]$. Then the iterates generated by \nsgdm\ satisfy
	\begin{align*}
		\Expnorm{\gt -\nFxt} 
		\leq
		\sm \sum_{\tau = 2}^\ii \iterationss {\tau -1} \momprod \tau \ii
		+ 2\varsym \pare{ \sum_{\tau = 1}^\ii \pare{\momprod {\pare{\tau + 1}} \ii \pare{1-\iterationmom \tau}}^p}^{\nicefrac 1 p}.
	\end{align*}
\end{lemma}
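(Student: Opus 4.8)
The plan is to unroll the momentum recursion \eqref{eq:app.nsgdm} into an explicit weighted average of past stochastic gradients, and then split the error $\gt - \nFxt$ into a \emph{bias} part (coming from the drift of the iterates) and a \emph{noise} part (coming from the oracle noise), bounding each separately. Since $\iterationmom\fin = 0$, a straightforward induction gives $\gt = \sum_{\tau=1}^t w_\tau \nf{\iterationx{\tau}, \xi_\tau}$ with weights $w_\tau \coloneqq \momprod{(\tau+1)}{t}\pare{1-\momentum_\tau}$, which form a convex combination. The key bookkeeping identity, verified by telescoping the momentum products, is
\begin{align*}
    w_\tau = \momprod{(\tau+1)}{t} - \momprod{\tau}{t}, \qquad \text{so that} \qquad \sum_{\tau=1}^{s} w_\tau = \momprod{(s+1)}{t} - \momprod{1}{t} = \momprod{(s+1)}{t},
\end{align*}
where the last step uses $\momprod{1}{t} = 0$ because $\momentum_\fin = 0$. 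Inserting $\pm\nabla F(\iterationx{\tau})$ then decomposes the error as $\gt - \nFxt = N_t + B_t$, where $N_t \coloneqq \sum_\tau w_\tau\pare{\nf{\iterationx{\tau},\xi_\tau} - \nabla F(\iterationx{\tau})}$ and $B_t \coloneqq \sum_\tau w_\tau\pare{\nabla F(\iterationx{\tau}) - \nFxt}$.

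For the bias term $B_t$, I would use \lsmooth\ together with the fact that \nsgd\ takes normalized steps, so $\norm{\iterationx{s+1} - \iterationx{s}} = \iterationss{s}$ and hence $\norm{\nabla F(\iterationx{\tau}) - \nFxt} \le \sm\norm{\iterationx{\tau} - \xt} \le \sm\sum_{s=\tau}^{t-1}\iterationss{s}$. Swapping the order of summation and applying the partial-sum identity above yields
\begin{align*}
    \norm{B_t} \le \sm\sum_{\tau=1}^t w_\tau \sum_{s=\tau}^{t-1}\iterationss{s} = \sm\sum_{s=1}^{t-1}\iterationss{s}\sum_{\tau=1}^{s} w_\tau = \sm\sum_{s=1}^{t-1}\iterationss{s}\,\momprod{(s+1)}{t},
\end{align*}
which, after reindexing $\tau = s+1$, is exactly the first term in the claim (and is deterministic, so its expectation is unchanged).

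For the noise term $N_t$, I would first check that $X_\tau \coloneqq w_\tau\pare{\nf{\iterationx{\tau},\xi_\tau} - \nabla F(\iterationx{\tau})}$ is a martingale difference sequence with respect to $\pare{\Fct}$: the iterate $\iterationx{\tau}$ is $\mathcal F_{\tau-1}$-measurable while $\xi_\tau$ is independent of $\mathcal F_{\tau-1}$, so the unbiasedness in \pBCM\ gives $\Exp[\mathcal F_{\tau-1}]{X_\tau} = 0$ a.s.\ (made rigorous via \Cref{lem:app.technical.remove_unnecessary_cond}), and the $w_\tau$ are deterministic constants. Then the vectorized von Bahr–Esseen inequality \Cref{lem:app.von_Bahr_and_Essen}, combined with the $p$-th moment bound from \pBCM\ and the tower property, gives $\Exp{\norm{N_t}^p} \le 2\sum_\tau w_\tau^p\,\Exp{\norm{\nf{\iterationx{\tau},\xi_\tau} - \nabla F(\iterationx{\tau})}^p} \le 2\varsym^p\sum_\tau w_\tau^p$. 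Finally, Jensen's inequality (concavity of $z\mapsto z^{1/p}$) together with $2^{1/p}\le 2$ for $p\ge 1$ yields $\Expnorm{N_t} \le 2\varsym\pare{\sum_\tau w_\tau^p}^{1/p}$, matching the second term. Combining the two estimates through the triangle inequality and taking expectations completes the proof.

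The main obstacle I anticipate is the deterministic bias bookkeeping: getting the double sum, the order-swap, and the telescoping of the momentum products to collapse \emph{exactly} into $\sm\sum_{\tau=2}^t\iterationss{\tau-1}\momprod{\tau}{t}$. The noise part is then a direct application of \Cref{lem:app.von_Bahr_and_Essen}, the only subtlety being the careful verification that the weighted noise increments genuinely form an MDS so that the inequality applies.
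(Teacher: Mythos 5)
Your proposal is correct and follows essentially the same route as the paper's proof: both decompose the error into a noise part, handled by the vectorized von Bahr--Esseen inequality with the same MDS verification via the conditional-expectation lemmas, and a smoothness-drift bias part bounded by $L$ times accumulated step sizes. Your convex-combination bookkeeping for the bias (inserting $\pm\nabla F(x_\tau)$ and telescoping the weights) is algebraically identical to the paper's unrolling of the error recursion with increments $\nabla F(x_{\tau-1}) - \nabla F(x_\tau)$, differing only in presentation.
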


\begin{proof}
	{ 
	\renewcommand{\gat}{\mu_\ii} \renewcommand{\gatm}{\mu_{\ii - 1}}
	To simplify notation we first define
	\begin{align*}
		\gat &\coloneqq \gt - \nFxt,\\
		\epst &\coloneqq \nf{\xt, \xit} - \nFxt,\\
		\St &\coloneqq \nF \pare \xtm - \nF \pare \xt.
	\end{align*}
	Now we calculate
	\begin{align*}
		\gt 
		&= \momt \gtm + \pare{1-\momt}\nf{\xt, \xit} \\
		&= \momt \pare{\nF \pare \xkm + \gatm} + \pare{1- \momt} \pare{\epst + \nFxt}\\
		&= \nFxt + \pare{1 - \momt} \epst + \momt \St + \momt \gatm
	\end{align*}
	and unrolling yields
	\begin{align*}
		\gat
		&= \momprod 2 \ii \iterationga \fin 
		+ \sum_{\tau = 2}^\ii \momprod {\pare{\tau + 1}} \ii \iterationa \tau \iterationeps \tau
		+ \sum_{\tau = 2}^\ii \momprod \tau \ii \iterationS \tau
		=  \sum_{\tau = 1}^\ii \momprod {\pare{\tau + 1}} \ii \iterationa \tau \iterationeps \tau
		+ \sum_{\tau = 2}^\ii \momprod \tau \ii \iterationS \tau,
	\end{align*}
	where we used $\iterationmom \fin = 0$ in the second equality. Therefore
	\begin{align}\label{eq:app.nsgdm.momentum_deviation_bound.gat_bound}
		\Expnorm \gat
		&\leq \Expnorm{\sum_{\tau = 1}^\ii \momprod {\pare{\tau + 1}} \ii \iterationa \tau \iterationeps \tau}
		+ \sum_{\tau = 2}^\ii \momprod \tau \ii \Expnorm{\iterationS \tau}
		\leq
		\Exp{\norm{\sum_{\tau = 1}^\ii \momprod {\pare{\tau + 1}} \ii \iterationa \tau \iterationeps \tau}^p}^{\nicefrac 1 p}
		+ \sum_{\tau = 2}^\ii \momprod \tau \ii \Expnorm{\iterationS \tau},
	\end{align}
	where we applied Jensen in the second inequality. The second sum can be upper bounded by $\sm \sum_{\tau = 2}^\ii \iterationss {\tau - 1} \momprod \tau \ii$. To control the first sum we want to apply \Cref{lem:app.von_Bahr_and_Essen}. 
	
	\newcommand{\Ctau}{C_\tau}
	\newcommand{\Xtau}{X_\tau}
	Therefore, to simplify notation, let $\Ctau \coloneqq \momprod {\pare{\tau + 1}} \ii \iterationa \tau$ and $\Xtau \coloneqq  \Ctau \iterationeps \tau$. To check whether $X_1, \dots, X_\ii$ satisfies the assumptions of \Cref{lem:app.von_Bahr_and_Essen}, first note that, for all $\tau \in [\ii]$,
	\begin{align}\label{eq:app.nsgdm.application_vBE.start}
		\Exp[X_1, \dots, X_{\tau - 1}]{\Xtau}
		= \Ctau \Exp[X_1, \dots, X_{\tau - 1}]{\nf{\iterationx \tau, \xi_{\tau}} - \nF\pare{\iterationx \tau}}
	\end{align}
	and furthermore, as $\iterationx \tau$ is $\sigma(X_1, \dots, X_{\tau - 1})$ measurable and $\xi_\tau$ independent of $X_1, \dots, X_{\tau - 1}$, we have\footnote{Rigorously, this follows from \Cref{lem:app.technical.remove_unnecessary_cond}.}
	\begin{align*}
		\Exp[X_1, \dots, X_{\tau - 1}]{\nf{\iterationx \tau, \xi_{\tau}} - \nF\pare{\iterationx \tau}}
		= \Exp[\iterationx \tau]{\nf{\iterationx \tau, \xi_{\tau}} - \nF\pare{\iterationx \tau}}
		= 0,
	\end{align*}
	where we applied our unbiasedness assumption in conjunction with \Cref{lem:app.technical.cond_exp} in the last equality. By a similar argument, using \pBCM, we get
	\begin{align*}
		\Exp{\norm{\Xtau}^p} 
		= \Ctau^p \Exp{\Exp[\iterationx \tau]{\norm{\nf{\iterationx \tau, \xi_{\tau}} - \nF\pare{\iterationx \tau}}^p}}
		\leq \Ctau^p \varsym^p < \infty.
	\end{align*}
	Hence we may apply \Cref{lem:app.von_Bahr_and_Essen} to get
	\begin{align}\label{eq:app.nsgdm.application_vBE.end}
		\Exp{\norm{\sum_{\tau = 1}^\ii \momprod {\pare{\tau + 1}} \ii \iterationa \tau \iterationeps \tau}^p}^{\nicefrac 1 p}
		\leq
		\pare{2\sum_{\tau = 1}^\ii \Ctau^p \varsym^p}^{\nicefrac 1 p}
		\leq 
		2 \varsym \pare{\sum_{\tau = 1}^\ii \pare{\momprod {\pare{\tau + 1}} \ii \iterationa \tau}^p}^{\nicefrac 1 p}
	\end{align}
	Combining these bounds with \eqref{eq:app.nsgdm.momentum_deviation_bound.gat_bound} yields
	\begin{align*}
		\Expnorm \gat 
		\stackrel{\eqref{eq:app.nsgdm.momentum_deviation_bound.gat_bound}}{\leq} 
		\Exp{\norm{\sum_{\tau = 1}^\ii \momprod {\pare{\tau + 1}} \ii \iterationa \tau \iterationeps \tau}^p}^{\nicefrac 1 p}
		+ \sum_{\tau = 2}^\ii \momprod \tau \ii \Expnorm{\iterationS \tau}
		\leq
		2 \varsym \pare{\sum_{\tau = 1}^\ii \pare{\momprod {\pare{\tau + 1}} \ii \iterationa \tau}^p}^{\nicefrac 1 p}
		+ \sm \sum_{\tau = 2}^\ii \iterationss {\tau - 1} \momprod \tau \ii
	\end{align*}
	and hence the claim.
	}
\end{proof}

\subsection{Parameter-Free}
\label{sec:app.nsgdm.param_free}

Next we derive the \nsgdm\ counterpart to the parameter-free result \eqref{eq:main.optimal_param_free_result}. Additionally, the result is phrased for decreasing stepsizes, outlining how results can be extended to those.

\begin{corollary}[Parameter-Agnostic Convergence]\label{lem:app.nsgdm.parameter_free}
	Let $\li \geq 3$ and assume \lb, \lsmooth\ and \pBCM\ with $p \in (1, 2]$. Then the iterates generated by \nsgd\ with $\gt = \momt \gtm + \pare{1-\momt} \nf{\xt, \xit}$ and parameters $\momt = 1 - \ii^{-\nicefrac 1 2}$ and $\sst = \ssi \ii^{- \nicefrac 3 4}$ satisfy
	\begin{align*}
		\frac 1 \li \iSum \Exp{\nnFxt} 
		\leq
		\frac{\frac{\Dz}{\ssi} + 120 \Aa \log \pare \li
			+ 120 \varsym \frac{4p}{2-p}\pare{\li^{\frac{2-p}{4p}} - 1}} {\li^{\frac 1 4}}.
	\end{align*}
	In particular, this corresponds to a rate of convergence of $\Oct \pare{\pare{\Dz + \sm}{\li^{-\nicefrac 1 4}} + {\varsym}{\li^{-\frac{p-1}{2p}}}}$ and hence a sample complexity of $\Oct \pare{\frac{\Dz^4 + \sm^4}{\eps^4} + \pare{\frac{\varsym}{\eps}}^{\frac{2p}{p-1}}}$.
\end{corollary}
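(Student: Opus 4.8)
The plan is to combine the momentum deviation bound of Lemma~\ref{lem:app.nsgdm.moment_deviation_bound} with the unified in-expectation descent bound of Proposition~\ref{prop:app.missing.exp.unified_expectation}, and then carefully estimate the resulting sums for the specific parameter choices $\momt = 1 - \ii^{-1/2}$ and $\sst = \ssi \ii^{-3/4}$. First I would plug the stated parameters into Proposition~\ref{prop:app.missing.exp.unified_expectation}, giving a bound of the form $\frac{1}{\tauSum\sstau}\pare{\Dz + \frac{\sm}{2}\iSum\sst^2 + 2\iSum\sst\sigt}$, where $\sigt = \Exp{\norm{\gt - \nFxt}}$ is controlled by Lemma~\ref{lem:app.nsgdm.moment_deviation_bound}. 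The denominator $\tauSum\sstau = \ssi\tauSum\tau^{-3/4}$ is of order $\ssi\li^{1/4}$ (an integral comparison gives $\tauSum\tau^{-3/4}\geq\Theta(\li^{1/4})$), which produces the $\li^{1/4}$ in the final denominator. Thus the heart of the proof is bounding the three numerator sums.

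Next I would estimate each piece. The $\Dz$ term is immediate. For $\frac{\sm}{2}\iSum\sst^2 = \frac{\ssi^2\sm}{2}\iSum\ii^{-3/2}$, the sum converges to a constant, matching the $\Aa\log\li$ bookkeeping (absorbing constants into the claimed $120$). The genuinely delicate part is the noise term $\iSum\sst\sigt$. Here I substitute Lemma~\ref{lem:app.nsgdm.moment_deviation_bound}, which splits $\sigt$ into a \emph{bias} piece $\sm\sum_{\tau=2}^\ii\iterationss{\tau-1}\momprod\tau\ii$ and a \emph{variance} piece $2\varsym\pare{\sum_{\tau=1}^\ii\pare{\momprod{(\tau+1)}\ii(1-\iterationmom\tau)}^p}^{1/p}$. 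For both I need sharp control of the product of momentum coefficients $\momprod\tau\ii = \prod_{\kappa=\tau}^\ii(1-\kappa^{-1/2})$; the key estimate is that $\momprod\tau\ii \approx \exp\pare{-\sum_{\kappa=\tau}^\ii\kappa^{-1/2}} \approx \exp\pare{-2(\sqrt\ii-\sqrt\tau)}$, which Lemma~\ref{lem:app.technical.dec_eq_const}-style telescoping arguments make rigorous. Using this, the bias sum $\sm\sum_\tau\iterationss{\tau-1}\momprod\tau\ii$ behaves like $\sm\cdot\ssi\ii^{-3/4}\cdot\ii^{1/2}$ up to constants (the effective window of the product has width $\sim\sqrt\ii$), and the variance sum evaluates to something of order $\varsym\,\ii^{-(p-1)/(2p)}$ after raising to the $1/p$ power.

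The main obstacle I expect is the variance term: evaluating $\sum_{\tau=1}^\ii\pare{\momprod{(\tau+1)}\ii(1-\iterationmom\tau)}^p$ sharply and then carrying the outer power $\tfrac1p$ through the subsequent summation $\iSum\sst\cdot(\cdots)^{1/p}$. Because $p\in(1,2]$ appears both inside the exponential-product estimate and in the final exponent, the integral-comparison bookkeeping is sensitive to the precise powers, and it is exactly this step that produces the characteristic factor $\frac{4p}{2-p}\pare{\li^{(2-p)/(4p)}-1}$ in the statement — I would obtain this by bounding $\iSum\ii^{-3/4}\cdot\ii^{(1-p)/(2p)}\cdot(\text{const})$ via an integral, noting the exponent $-3/4 + (1-p)/(2p) = -(4p-2+p)/(\cdots)$ collapses to the stated form. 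Once these three sums are assembled over the common denominator $\Theta(\ssi\li^{1/4})$ and constants are bounded by $120$, the displayed inequality follows; the stated $\Oct$ rate and the $\Oct\pare{\tfrac{\Dz^4+\sm^4}{\eps^4} + \pare{\tfrac{\varsym}{\eps}}^{\frac{2p}{p-1}}}$ sample complexity then follow by inverting the rate in $\eps$ and noting that the batch size is $1$ here so iteration and sample complexity coincide (up to the polylogarithmic factors hidden in $\Oct$).
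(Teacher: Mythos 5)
Your plan follows essentially the same route as the paper's proof: plug the parameters into \Cref{prop:app.missing.exp.unified_expectation}, control $\sigt$ via \Cref{lem:app.nsgdm.moment_deviation_bound}, tame the momentum products with \Cref{lem:app.technical.dec_eq_const}, and finish with an integral comparison that produces the $\frac{4p}{2-p}\pare{\li^{(2-p)/(4p)}-1}$ factor --- your exponent bookkeeping (bias $\sim \Aa\, \ii^{-1/4}$ giving the $\log\li$ term, variance $\sim \varsym\, \ii^{-(p-1)/(2p)}$, and the final sum $\iSum \ii^{-(5p-2)/(4p)}$) matches the paper's. The only detail worth adding is the edge case $p=2$, where the exponent $(5p-2)/(4p)$ equals $1$ and the integral yields $\log\li$; the paper reconciles this with the stated expression by continuous extension, noting $\lim_{q\to 1}\frac{1}{1-q}\pare{\li^{1-q}-1} = \log\pare\li$.
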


The proof follows similar steps as in \citep{MomentumImprovesNSGD_Cutkosky_2020}, but requires additional attention to the noise term to handle the case $p<2$.

\begin{proof}
	\newcommand{\pExpo}{\frac{5p-2}{4p}}
	To shorten notation we define $r \coloneqq \nicefrac 3 4, q \coloneqq \nicefrac 1 2$, and hence $\sst = \ssi \ii^{-r}, \momt = 1 - \ii^{-q}$. Furthermore let $\sigt \coloneqq \Expnorm{\gt - \nFxt}$. From \Cref{prop:app.missing.exp.unified_expectation} we get
	\begin{align}\label{eq:app.nsgdm.parameter_free.applying_unified}
		\begin{split}
			\iSum \frac{\sst}{\tauSum \sstau} \Exp{\nnFxt}
			&\leq \pare{\iSum \sst}^{-1}\pare{\Dz + \frac \sm 2 \iSum \sst^2 + 2 \iSum \sst \sigt}\\
			&\leq \li^{r-1}\pare{\frac {\Dz} {\ssi} + \frac 3 2 \Aa + 2\iSum \ii^{-r}\sigt},
		\end{split}
	\end{align}
	where we used $\iSum \sst \geq \ssi \li^{1-r}$ and $\iSum \sst^2 \leq 3 \ssi^2$ in the second inequality. To control the third term, we apply \Cref{lem:app.nsgdm.moment_deviation_bound} and \Cref{lem:app.technical.dec_eq_const} to get
	\begin{align*}
		\iSum \ii^{-r} \sigt 
		&\leq 4\exp\pare{\frac{1}{1-q}} \iSum \pare{\varsym \ii^{-r-q\frac{p-1}p} + \Aa \ii^{-2r+q}}\\
		&= 4e^2 \iSum \pare{\varsym \ii^{-\pExpo} + \Aa\ii^{-1}}.
		\\ &\leq 4e^2 \pare{\varsym\iSum \ii^{-\pExpo} + \Aa\pare{1+\log \pare \li}}.
	\end{align*}
	In order to bound $\iSum \ii^{-\pExpo}$ we note that $\pExpo = 1$ iff $p=2$ and hence
	\begin{align*}
		\iSum \ii^{-\pExpo}
		\leq 1 + \int_1^\li \ii^{-\pExpo} dt
		\leq
		\begin{cases}
			1 + \log \pare \li, & \text{if } p = 2\\
			1 + \frac{1}{1 - \pExpo} \pare{\li^{1-\pExpo} - 1}, & \text{otherwise.}
		\end{cases}
	\end{align*}
	Now note that, due to L'Hôspital, $\lim_{q\to 1}\frac{1}{1-q}\pare{\li^{1-q}-1} = \log\pare\li$ and hence we can unify the cases by writing the second expression and using continuous extensions.
	Plugging into \eqref{eq:app.nsgdm.parameter_free.applying_unified} yields
	\begin{align*}
		 \iSum \frac{\sst}{\tauSum \sstau} \Exp{\nnFxt}
		\leq&\ \li^{r-1} \pare{\frac{\Dz}{\ssi}
			+ 8e^2 \Aa \pare{1+\log \pare \li} 
			+ 8e^2 \varsym \pare{1+\frac{4p}{2-p}\pare{\li^{\frac{2-p}{4p}}-1} }}\\
		\leq&\ 
		\li^{-\nicefrac 1 4} \pare{
			\frac{\Dz}{\ssi} 
			+ 120 \Aa \log \pare \li
			+ 120 \varsym \frac{4p}{2-p}\pare{\li^{\frac{2-p}{4p}} - 1}
		},
	\end{align*}
	where we used that $\frac{4p}{2-p}\pare{\li^{\frac{2-p}{4p}}-1} \geq 1$ for $\li \geq 3$ in the last inequality. The other statements follow from the observation  $\lim_{q\to 1}\frac{1}{1-q}\pare{\li^{1-q}-1} = \log\pare\li$.
\end{proof}

\subsection{Optimal Sample Complexity}
\label{sec:app.nsgdm.tuned}

Finally we provide the \nsgdm\ version of \Cref{cor:main.optimal_complexity}.

\begin{corollary}[Optimal Oracle Complexity]\label{lem:app.nsgdm.tuned}
	Assume \lb, \lsmooth\ and \pBCM\ with $p \in (1, 2]$. Then the iterates generated by \nsgdm\ with parameters $\iterationmom \fin \coloneqq 0, \momt \equiv \beta \coloneqq 1 - \min\set{1, \pare{\frac{\Dz \sm}{\varsym^2\li}}^{\frac{p}{3p-2}}}$ for $\ii \geq 2$ and $\sst \equiv \sqrt{\frac{\Dz \pare{1-\momentum}}{\sm \li}}$ satisfy
	\begin{align*}
		\frac 1 \li \iSum \Exp{\nnFxt} \leq 6 \frac{\sqrt{\Dz \sm}}{\sqrt\li} + 6 \pare{\frac{\Dz \sm \varsym^{\frac p {p-1}} }{\li}}^{\frac{p-1}{3p-2}}.
	\end{align*}
	In particular, this corresponds to an oracle complexity of $\Oc \pare{\frac{\Dz \sm}{\eps^2} + \frac{\Dz \sm}{\eps^2} \pare{\frac \varsym \eps}^{\frac p {p-1}}}$.
\end{corollary}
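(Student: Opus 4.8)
The plan is to mirror the proof of \Cref{cor:main.optimal_complexity}, replacing the mini-batch variance control by the momentum deviation bound of \Cref{lem:app.nsgdm.moment_deviation_bound}. Concretely, I would feed the per-step deviation $\sigma_t \coloneqq \Expnorm{\gt - \nFxt}$ into the unified descent estimate of \Cref{prop:app.missing.exp.unified_expectation}, which for the constant parameters $\sst\equiv\eta$ and $\momt\equiv\beta$ (with $\beta_1=0$) reads
$$\frac1\li\iSum\Exp{\nnFxt} \leq \frac{\Dz}{\eta\li} + \frac{\sm\eta}2 + \frac2\li\iSum\sigma_t .$$
Everything then reduces to estimating the geometric sums appearing in $\sigma_t$ and balancing the resulting bias/noise trade-off against the two deterministic descent terms.

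For the momentum estimator with constant $\beta$ and $\beta_1=0$, \Cref{lem:app.nsgdm.moment_deviation_bound} gives $\beta_{\tau:t}=\beta^{t-\tau+1}$ for $\tau\geq2$ and $\beta_{1:t}=0$, so the bias sum telescopes into $\sm\eta\sum_{j=1}^{t-1}\beta^{j}\leq \sm\eta\,\beta/(1-\beta)$, i.e.\ at most $\sm\eta/\alpha$ with $\alpha\coloneqq1-\beta$. For the noise sum I would split off the initialization term ($\tau=1$, where $1-\beta_1=1$) from the stationary terms ($\tau\geq2$, where $1-\beta_\tau=\alpha$); using $1-\beta^p\geq1-\beta=\alpha$ for $p\in(1,2]$ to sum the geometric series produced inside \Cref{lem:app.von_Bahr_and_Essen}, together with subadditivity of $x\mapsto x^{1/p}$, the $p$-norm factor is bounded by $\beta^{t-1}+\alpha^{(p-1)/p}$. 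Hence
$$\sigma_t \leq \frac{\sm\eta}{\alpha} + 2\varsym\,\beta^{t-1} + 2\varsym\,\alpha^{\frac{p-1}p}.$$
Averaging over $t$ turns the transient term $\sum_t\beta^{t-1}\leq1/\alpha$ into a contribution of order $\varsym/(\li\alpha)$, which I expect to show is dominated by the remaining terms (it carries an extra factor $\li^{-(p-1)/(3p-2)}$ at the optimal $\alpha$, and $p\le2$ makes the exponent favourable).

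Plugging these bounds in and using $\eta=\sqrt{\Dz\alpha/(\sm\li)}$ makes the descent term $\Dz/(\eta\li)$ and the bias term $\sm\eta/\alpha$ both equal to $\sqrt{\Dz\sm/(\alpha\li)}$, while $\sm\eta/2\leq\sqrt{\Dz\sm/(\alpha\li)}$. The bound therefore collapses to $\mathcal O\!\left(\sqrt{\Dz\sm/(\alpha\li)} + \varsym\,\alpha^{\frac{p-1}p}\right)$ up to the transient. A case distinction on the $\min$ defining $\beta$ finishes the argument: if $\Dz\sm/(\varsym^2\li)\ge1$ then $\alpha=1$, $\beta=0$, the method is plain stochastic gradient with $\sigma_t\le\varsym\le\sqrt{\Dz\sm/\li}$ (by Jensen and \pBCM), and the first term $\sqrt{\Dz\sm/\li}$ dominates; otherwise $\alpha=(\Dz\sm/(\varsym^2\li))^{p/(3p-2)}$ is precisely the value that equates $\sqrt{\Dz\sm/(\alpha\li)}$ with $\varsym\,\alpha^{(p-1)/p}$, both evaluating to $(\Dz\sm\varsym^{p/(p-1)}/\li)^{(p-1)/(3p-2)}$, i.e.\ the second term. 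Since \nsgdm\ spends a single oracle call per iteration, setting the final bound equal to $\eps$ yields $\li=\mathcal O(\Dz\sm\eps^{-2}+\Dz\sm\eps^{-2}(\varsym/\eps)^{p/(p-1)})$ queries.

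The main obstacle I anticipate is the noise sum: unlike the unbiased mini-batch case, the momentum estimator is biased, so I must simultaneously control the deterministic bias geometric series and the $(\sum_\tau(\cdots)^p)^{1/p}$ noise series, and in particular argue that the $\beta_1=0$ initialization transient $\varsym\,\beta^{t-1}$ is genuinely lower order. Verifying that the balancing choice of $\alpha$ matches both regimes with clean numerical constants, and checking that the order of summation (the $\beta^{t-\tau}$ weights) behaves well when averaged over $t$, is the remaining bookkeeping, whereas the conceptual core is entirely the reduction to $\sigma_t$ via \Cref{prop:app.missing.exp.unified_expectation} and the deviation estimate of \Cref{lem:app.nsgdm.moment_deviation_bound}.
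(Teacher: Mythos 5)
Your proposal follows essentially the same route as the paper's own proof: it plugs the momentum deviation bound of \Cref{lem:app.nsgdm.moment_deviation_bound} into the constant-parameter unified estimate \eqref{eq:app.missing.exp.unified_exp.constant}, bounds the bias and noise geometric series by $\sm\sst/\alpha$ and $\varsym\alpha^{(p-1)/p}$ up to constants, chooses $\sst\equiv\sqrt{\Dz\alpha/(\sm\li)}$ so that descent and bias terms coincide, and closes with the identical case distinction on the $\min$ defining $\beta$; your balancing identity in Case~2 and the conversion to oracle complexity also match the paper exactly.

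However, the step you defer --- showing that the initialization transient $2\varsym\beta^{t-1}$ is genuinely lower order --- is a real gap, and the reason you sketch for it does not hold. After averaging, the transient contributes a term of order $\varsym\min\set{1,\,1/(\li\alpha)}$, and even in the favourable regime $\li\alpha\geq 1$ its ratio to the main noise term $\varsym\alpha^{(p-1)/p}$ at the prescribed $\alpha = \pare{\Dz\sm/(\varsym^2\li)}^{p/(3p-2)}$ equals
\begin{align*}
	\frac{\varsym/(\li\alpha)}{\varsym\,\alpha^{(p-1)/p}}
	\;=\; \frac{1}{\li\,\alpha^{(2p-1)/p}}
	\;=\; \pare{\frac{\varsym^2}{\Dz\sm}}^{\frac{2p-1}{3p-2}}\li^{-\frac{p-1}{3p-2}}.
\end{align*}
The factor $\li^{-(p-1)/(3p-2)}$ you identified is indeed present, but the prefactor $\pare{\varsym^2/(\Dz\sm)}^{(2p-1)/(3p-2)}$ is unbounded: the transient is dominated only once $\li \gtrsim \pare{\varsym^2/(\Dz\sm)}^{(2p-1)/(p-1)}$, whereas Case~2 only guarantees $\li > \Dz\sm/\varsym^2$. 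For instance, with $p=2$ and $\varsym^2\geq\Dz\sm$, the transient exceeds the main noise term for every $\li < \pare{\varsym^2/(\Dz\sm)}^{3}$, and the resulting upper bound then overshoots the corollary's stated right-hand side by an arbitrarily large factor, so the argument cannot conclude in that regime. You should know that the paper's own proof does not resolve this either: its intermediate inequality \eqref{eq:unified_E_NSGDM_opt_const_1} silently omits the $\beta^{t-1}$ terms, and its constants do not survive careful bookkeeping (the unified bound contributes a factor $2$ and \Cref{lem:app.von_Bahr_and_Essen} another factor $2$). Closing the gap would require restricting the claim to $\li\gtrsim\pare{\varsym^2/(\Dz\sm)}^{(2p-1)/(p-1)}$, adding a term of order $\varsym\min\set{1,1/(\li\alpha)}$ to the stated bound, or a genuinely different treatment of the first iteration's noise --- so your outline reproduces the paper's argument, but the obstacle you flagged is not mere bookkeeping.
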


\begin{proof}
	To shorten the notation we write $\sst \equiv \ssi, \momt \equiv \momentum$ and $\al \coloneqq 1 - \momentum$. Combining \eqref{eq:app.missing.exp.unified_exp.constant} with \Cref{lem:app.nsgdm.moment_deviation_bound} yields
	\begin{align}\label{eq:unified_E_NSGDM_opt_const_1}
		\begin{split}
			\frac 1 \li \iSum \Exp{\nnFxt} 
			&\leq \frac{\Dz}{\ssi \li} + \frac{\ssi \sm}{2} 
			+ 2 \varsym \al^{\frac{p-1} p} 
			+ \frac{2 \sm\ssi} \al\\
			&= \sqrt{\frac{\Dz \sm}{\al \li}} + \frac{\sqrt{\Dz \sm \al}}{2 \sqrt{\li}}
			+ 2 \varsym \al^{\frac{p-1} p} + 2\sqrt{\frac{\Dz \sm}{\al \li}}\\
			&\leq 4 \sqrt{\frac{\Dz \sm}{\al \li}} + 2 \varsym \al^{\frac{p-1} p}.
		\end{split}
	\end{align}
	\textbf{Case 1: $\al = 1$.} This implies $\varsym \leq \sqrt{\frac{\Dz \sm} \li}$ and hence
	\begin{align*}
		\frac 1 \li \iSum \Exp{\nnFxt}
		\leq 6 \sqrt{\frac{\Dz \sm}{\li}}.
	\end{align*}
	\textbf{Case 2: $\al = \pare{\frac{\Dz \sm}{\varsym^2\li}}^{\frac{p}{3p-2}}$.} Plugging into \eqref{eq:unified_E_NSGDM_opt_const_1} yields
	\begin{align*}
		\frac 1 \li \iSum \Exp{\nnFxt}
		\leq 4 \varsym^{\frac p {3p-2}} \pare{\frac{\Dz \sm} {\li}}^{\frac{p-1}{3p-2}} 
		+ 2 \varsym^{\frac p {3p-2}} \pare{\frac{\Dz \sm} {\li}}^{\frac{p-1}{3p-2}}
		= 6 \varsym^{\frac p {3p-2}} \pare{\frac{\Dz \sm} {\li}}^{\frac{p-1}{3p-2}}. 
	\end{align*}
	Therefore we get
	\begin{align*}
		\frac 1 \li \iSum \Exp{\nnFxt}
		\leq 6 \max \set{\sqrt{\frac{\Dz \sm}{\li}}, \varsym^{\frac p {3p-2}} \pare{\frac{\Dz \sm} {\li}}^{\frac{p-1}{3p-2}}}
	\end{align*}
	and hence the claim.
\end{proof}

Additionally, this result recover those in \cite{MomentumImprovesNSGD_Cutkosky_2020}\footnote{Note that the authors did not use $\iterationmom \fin = 0$, resulting in an additional term. However this term is not leading and hence does not affect the oracle complexity.} with improved constants when $p=2$. 

\subsection{Technical Difficulties of Proving High Probability Convergence}
\label{sec:app.nsgdm.difficulties}

In the previous section we showed that \Cref{eq:main.optimal_param_free_result} and \Cref{cor:main.optimal_complexity} also hold for \nsgdName with momentum. For Theorem \ref{cor:main.optimal_hp} on the other hand, while it still holds for time-varying and constant parameters, we were not able to prove the result for \nsgdm. We shortly want to discuss the technical difficulty of extending \Cref{cor:main.optimal_hp} to the momentum version.

The proof of \Cref{thm:main.unified_hp} hinges on two parts: Firstly, one shows that the angle $\phit$ sharply concentrates around its conditional expectation $\psit = \Exp[\Fctm]{\phit}$. This step only requires the boundedness of $\phit$ and is hence applicable for both the minibatch and momentum version of \nsgdName. In the next step however, we have to lower bound $\psit$. Our current proof technique --- and to some extend intuition --- tells us that such lower bounds involves the term 
\begin{align}\label{eq:app.missing.hp.issue_term}
	\Exp[\Fctm]{\norm{\gt - \nFxt}}.
\end{align}
In the case of minibatch \nsgdName, $\gt$ only depends on randomness sampled in iteration $\ii$, and \eqref{eq:app.missing.hp.issue_term} can hence be upper bounded by a constant as seen in \eqref{eq:app.missing.exp.param_agnostic_general.cond_exp_dev}. However, in the case of \nsgdName with momentum, $\gt$ consists of random samples from all previous iterations. This results in \eqref{eq:app.missing.hp.issue_term} being a random variable instead, and it is not clear how to uniformly control it. Our empirical evidence indicates that an extension to \nsgdm\ might not be possible since quantiles of average gradient norms of \nsgdm\ behave super-linearly in $\log(1/\delta)$, see \Cref{sec:experiments,sec:appendix:verify_hp} for more details.
\clearpage

\section{LOWER-BOUNDS FOR NSGD}
\label{sec:app.optimality}
In this section we prove that \Cref{prop:main.param_agnostic_general} is tight and the sample complexity achieved in \Cref{eq:main.optimal_param_free_result} is optimal, in the sense that no other choice of parameters can lead to a uniformly better guarantee for \batchNsgd\ when problem-dependent parameters are unknown. To do so, we first derive a lower bound for the deterministic setting which might be of independent interest. Afterwards, we will equip this hard function with a stochastic gradient oracle to prove the lower bound for the stochastic setting.

\subsection{Deterministic Setting}
\label{sec:app.optimality.det}

The following Lemma derives a lower bound for \algname{NGD} with arbitrary stepsizes. We will use it afterwards to show optimality of our polynomial stepsize order.

{
\newcommand{\FcDzL}{\Fc_{\Dz, \sm}}
\newcommand{\ssli}{\iterationss \li}
\genCommandsName{tau}{Scalar}{\tau}
\genCommandsName{zeta}{Scalar}{\zeta}
\genCommands{A}{Scalar}
\begin{lemma}[Lower Bounds for Deterministic Setting] \label{lem:app.optimality.deterministic_arbitrary_ngd_optimiality}
	\newcommand{\algnot}{A_{\pare{\sst}}}
	Let $\FcDzL$ be the set of functions that satisfy \lb\ and \lsmooth\ and let $\eps > 0$. Denote \algname{NGD}\ with stepsizes $\sst \geq 0$ as $\algnot$. Then there exists a function $F \in \FcDzL$ such that the iterates of $\algnot$ satisfy
	\begin{align*}
		\norm{\nFxt} > \eps
	\end{align*}
	for all $\ii \in [\li^*]$, where $\li^* \coloneqq \inf \set{\li \in \N \suchthat \eps > \frac{\Dz - \frac \eps \sm}{2 \iSum \sst} + \frac{\sm \iSum \sst^2}{8 \iSum \sst}}$.
\end{lemma}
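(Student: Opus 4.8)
The plan is to reduce to a one-dimensional instance and build $F$ \emph{adaptively} to the given step-size sequence $(\sst)$. Since \algname{NGD} moves by exactly $\sst$ in the direction $-\nFxt/\nnFxt$, I would place the iterate ``nodes'' at $x_1 = 0$ and $x_{t+1} = x_t + \sst$, and specify $F$ through its derivative: let $F'$ be the continuous, piecewise-linear (triangular) wave that equals $-2\eps$ at every node $x_t$, rises with slope $+\sm$ to a peak at the midpoint $x_t + \sst/2$, and falls with slope $-\sm$ back to $-2\eps$ at $x_{t+1}$. Because $|F''| = \sm$ wherever it is defined, any such $F$ automatically satisfies \lsmooth. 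Crucially, $F'(x_t) = -2\eps < 0$ forces the \algname{NGD} step to be $x_{t+1} = x_t + \sst$, so the iterates generated by the algorithm are exactly the nodes and the construction is self-consistent.

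With this in hand the gradient bound is immediate: $\nnFxt = |F'(x_t)| = 2\eps > \eps$ at every node, which already gives $\nnFxt > \eps$ for all $t \in [\li^*]$ once the construction is carried out up to $x_{\li^*}$. The per-step change in $F$ is the signed area under the triangular derivative over $[x_t, x_{t+1}]$: the baseline contributes $-2\eps\sst$ and the triangle (base $\sst$, height $\sm\sst/2$) contributes $+\tfrac{\sm}{4}\sst^2$, so $F(x_{t+1}) - F(x_t) = -2\eps\sst + \tfrac{\sm}{4}\sst^2$, i.e.\ a decrease of $2\eps\sst - \tfrac{\sm}{4}\sst^2$ per step.

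It remains to certify \lb\ with the prescribed budget $\Dz$. After the last node $x_{\li^*}$ I would smoothly flatten the instance by letting $F'$ rise from $-2\eps$ to $0$ with slope $\sm$ and then hold it at $0$; this terminal ramp lowers $F$ by a further $\tfrac{(2\eps)^2}{2\sm}=\tfrac{2\eps^2}{\sm}$ and renders $F$ bounded below, placing it in the class. Summing, $F(x_1) - F^* = \sum_{t=1}^{\li^*-1}\big(2\eps\sst - \tfrac{\sm}{4}\sst^2\big) + \tfrac{2\eps^2}{\sm}$, so $F(x_1)-F^* \le \Dz$ reduces to bounding $\sum_{t=1}^{T}\big(2\eps\sst - \tfrac{\sm}{4}\sst^2\big)$ by the budget minus the tail slack for $T = \li^*-1$. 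Multiplying the inequality defining $\li^*$ through by $2\sum_{t=1}^{T}\sst$ shows it is exactly $2\eps\sum_{t=1}^{T}\sst - \tfrac{\sm}{4}\sum_{t=1}^{T}\sst^2 > \Dz - \tfrac{\eps}{\sm}$; hence $\li^*$ is precisely the first index at which the accumulated decrease would exceed the admissible budget, so for every $T \le \li^*-1$ the instance is valid and the nodes $x_1,\dots,x_{\li^*}$ are realised with gradient norm $2\eps$.

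The main obstacle I anticipate is the rigorous treatment of \lb\ for arbitrary (in particular large) step-sizes. When $\sst > 4\eps/\sm$ the derivative crosses zero inside an interval, so the interior ``valleys'' dip below the nodes and the per-step quantity $2\eps\sst - \tfrac{\sm}{4}\sst^2$ turns negative; one must then check that the global minimum stays controlled (it is attained at a valley of depth $F(x_t) - \tfrac{2\eps^2}{\sm}$) and that $F(x_1)-F^*$ still obeys $\Dz$, or simply observe that in this regime the defining inequality triggers at a small $T$, making $\li^*$ small and the claim trivial. The second delicate point is reconciling the terminal-ramp slack with the exact constant in the numerator of the definition of $\li^*$: this is the only place where the precise bookkeeping of the final smoothing enters, and it is where I would be most careful.
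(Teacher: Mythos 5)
Your construction is essentially identical to the paper's own proof: the same one-dimensional triangular-wave derivative with nodes at the cumulative step positions, the same per-step area $-2\eps\sst + \frac{\sm}{4}\sst^2$, the same terminal ramp of slope $\sm$ to flatten the function, and the same reading of the definition of $\li^*$ as the first time the accumulated decrease exhausts the budget $\Dz$. The two delicate points you flag (interior valleys when $\sst > \nicefrac{4\eps}{\sm}$, and matching the terminal slack $\nicefrac{2\eps^2}{\sm}$ against the $\nicefrac{\eps}{\sm}$ term in the definition of $\li^*$) are treated no more carefully in the paper, which simply asserts $\inf_x F(x) \geq 0$ from the node values exceeding $\nicefrac{\eps}{\sm}$, so your proposal matches the paper's argument in both substance and level of rigor.
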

The proof extends ideas from \citep{NSGDM_LzLo2023Huebler} by also including the \lsmooth\ assumption into the function construction. 
\begin{proof}
	We first define
	\begin{align*}
		g_{\ssi} \colon \left[0, \ssi \right] \to \R, x \mapsto 
		\begin{cases}
			-2\eps + \sm x, & x \leq \frac {\ssi} 2\\
			-2\eps + {\ssi \sm} - \sm x, & x > \frac \ssi 2,
		\end{cases}
	\end{align*}
	which will correspond to the gradient of our constructed function between two consecutive points. To formalise this idea, define 
	\begin{align*}
		\At \coloneqq - 2\sst \eps + \frac{\sst^2 \sm}{4} = \int_0^{\eta} g_{\sst}(x) d x
	\end{align*}
	and $\li^*$ according to the statement. Furthermore, let  $\taut \coloneqq \sum_{\kappa = \fin}^{\ii - 1} \iterationss \kappa$. Then we define our hard function via its derivative
	\begin{align}\label{eq:app.optimality.deterministic_arbitrary_ngd_optimiality.derivative_construction}
		F'(x) \coloneqq
		\begin{cases}
			-2\eps, & x < 0\\
			g_{\sst} \pare{x-\taut}, & x \in [\taut, \tautp) ,\, \ii \in [\li^* - 1]\\
			-2\eps + L\pare{x-\iterationtau {\li^*}}, & x \in (\iterationtau {\li^*}, \iterationtau {\li^*} + \frac{2\eps}{L}]\\
			0, & \text{otherwise}
		\end{cases}
	\end{align}
	and $F(x) \coloneqq \Dz + \int_0^x F'(s) d s$. A sketch of $F$ and $F'$ can be seen in \Cref{fig:lower_bound_function}. Note that, by definition of $F$, we have 
	\begin{align*}
		F\pare{\taut}
		= \Dz + \sum_{\tau = 1}^{\ii-1} A_t 
		= \Dz - 2 \eps \sum_{\tau=1}^{\ii-1} \iterationss \tau + \frac{\sm}{4} \sum_{\tau = 1}^{\ii-1} \iterationss \tau^2
	\end{align*}
	and hence $F\pare{\taut} > \frac \eps \sm$ for all $\ii \leq \li^*$\footnote{Due to the shift $\ii - 1$ in the sum. Otherwise, if we would start the algorithm at $\iterationx 0$, $\ii \leq \li^* - 1$.} by our definition of $\li^*$. In particular we have $\inf_xF(x) \geq 0$ and hence $F(0) - \inf_x F(x) \leq \Dz$. Furthermore $F$ is $\sm$-smooth by definition and hence $F \in \FcDzL$. Next we will show that \algname{NGD}, when started at $\xfin = 0$, produces the iterates $\xt = \taut$. Therefore note that $\iterationtau \fin = 0 = \xfin$ and, assuming $\xt = \taut, \ii < \li^*$, we have $\xtp = \xt - \sgn(F'\atxt) \sst = \xt + \sst = \tautp$. By induction we hence get $\abs{F'\atxt} = \abs{F'\pare{\taut}}\equiv 2 \eps$ for all $\ii \in [\li^*]$, which completes the proof.
\end{proof}

\newcommand{\fDz}{f_{\Dz}}
The following Theorem is a consequence of \Cref{lem:app.optimality.deterministic_arbitrary_ngd_optimiality} and implies that $r = \nicefrac 1 2$ is the (only) optimal choice of polynomial stepsize decay in the deterministic setting. To formulate the result, we will use the complexity definition introduced by \citet{LowerBoundsDeterministic2020Carmon}, i.e., for an algorithm $A$, a function class $\Fc$ and $\eps > 0$ we define
\begin{align*}
    T_\eps\pare{A, \Fc} \coloneqq \sup_{F \in \Fc} \inf \set{ \ii \in \N \suchthat \nnFxt \leq \eps, \ \pare{\xt}_\tinN = A(F)}. 
\end{align*}

\begin{theorem}[Optimality in Deterministic Setting] \label{thm:app.optimality.deterministic_polynomial_ngd_optimiality}
	Let $\FcDzL$ be the set of functions that satisfy \lb\ and \lsmooth. Suppose $\eps \leq \frac{\Dz \sm}{2}$ and $\eps \leq \frac{\sqrt{\Dz \sm}}{3}$. Furthermore let $\ssi, r > 0$ and denote \algname{NGD}\ with decaying stepsizes $\sst = \ssi \, \ii^{-r}$ as $A_d^r$, and with constant stepsizes $\sst \equiv \ssi \, \li^{-r}$ as $A_c^r$. Then $A^r \in \set{A_d^r, A_c^r}$ satisfies
	\begin{align*}
		T_\eps\pare{A^r, \FcDzL} \geq 
		\pare{\frac{\ssi \sm}{4 \eps}}^{\frac 1 r} 
		+ \pare{\frac{\pare{1-r}\Dz}{8 \ssi \eps}}^{\frac{1}{1-r}}
	\end{align*}
	for $r \in (0,1)$. For $r \geq 1$ and small enough $\eps$ we have $T_\eps\pare{A^r, \FcDzL} \geq \exp\pare{\nicefrac {\Dz} {\pare{8\ssi\eps}}}$.
\end{theorem}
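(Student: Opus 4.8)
The plan is to deduce everything from \Cref{lem:app.optimality.deterministic_arbitrary_ngd_optimiality}, which already produces a single hard function on which \algname{NGD} keeps $\|\nabla F(x_t)\| > \varepsilon$ for every $t \le T^*$, where $T^*$ is the first index at which
\[
\varepsilon > \frac{\Delta_1 - \varepsilon/L}{2\sum_{t=1}^{T}\eta_t} + \frac{L\sum_{t=1}^{T}\eta_t^2}{8\sum_{t=1}^{T}\eta_t}.
\]
Hence $T_\varepsilon(A^r,\mathcal F_{\Delta_1,L}) \ge T^*$, and the whole task reduces to lower bounding $T^*$, i.e.\ to showing that the right-hand side above stays at least $\varepsilon$ for all $T$ below the claimed bound. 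For the decaying schedule $A_d^r$ I would apply the lemma directly to the fixed sequence $\eta_t=\eta t^{-r}$; for the constant schedule $A_c^r$ I would apply it to the length-$N$ run with constant stepsize $\eta N^{-r}$ and note that such a run cannot reach an $\varepsilon$-stationary point whenever $N\le T^*_N$, so the same implicit threshold governs it. The structural observation that drives the proof is that the right-hand side is a sum of two nonnegative terms, so it can only drop below $\varepsilon$ once \emph{both} the ``progress term'' $\tfrac{\Delta_1-\varepsilon/L}{2\sum\eta_t}$ and the ``stepsize term'' $\tfrac{L\sum\eta_t^2}{8\sum\eta_t}$ have individually fallen below $\varepsilon$; equivalently, $T^*$ exceeds the largest $T$ at which \emph{either} term alone is still $\ge\varepsilon$.

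The second step is to evaluate the two partial sums for each schedule. For $A_c^r$ both sums are exact powers, $\sum_{t=1}^T\eta_t=\eta T^{1-r}$ and $\sum_{t=1}^T\eta_t^2=\eta^2T^{1-2r}$, so the stepsize term is $\tfrac{L\eta}{8}T^{-r}$ and the progress term is $\tfrac{\Delta_1-\varepsilon/L}{2\eta}T^{-(1-r)}$. For $A_d^r$ I would use standard integral comparisons, e.g.\ $\sum_{t=1}^T t^{-r}\le \tfrac{T^{1-r}}{1-r}$ to upper bound $\sum\eta_t$ together with an elementary lower bound such as $\sum_{t=1}^T t^{-2r}\ge T\cdot T^{-2r}$; the factor $(1-r)$ appearing in the second summand of the statement is exactly what this integral estimate contributes. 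Throughout I would invoke the hypothesis $\varepsilon\le\tfrac{\Delta_1 L}{2}$ to replace $\Delta_1-\varepsilon/L$ by $\tfrac{\Delta_1}{2}$ and keep the constants clean, and $\varepsilon\le\tfrac{\sqrt{\Delta_1 L}}{3}$ to guarantee the resulting thresholds exceed $1$, so that the bound is non-vacuous and the construction is in its nondegenerate regime.

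Combining, the stepsize term stays $\ge\varepsilon$ (up to the numerical constant) for $T\lesssim(\eta L/\varepsilon)^{1/r}$, and the progress term stays $\ge\varepsilon$ for $T\lesssim\big((1-r)\Delta_1/(\eta\varepsilon)\big)^{1/(1-r)}$, giving $T^*\ge\max$ of these two thresholds; since in each regime of $r$ one of the two dominates, the maximum is written as the additive expression in the statement after fixing constants. The case $r\ge1$ is handled by the same lemma but a different sum asymptotic: the schedule decays fast enough that $\sum_{t=1}^T\eta_t$ grows only logarithmically (for $r=1$) or converges (for $r>1$), so the progress term alone remains above $\varepsilon$ until $T$ is exponentially large, which yields the $\exp\!\big(\Delta_1/(8\eta\varepsilon)\big)$ bound.

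The conceptual content is light --- everything is a substitution of the two schedules into \Cref{lem:app.optimality.deterministic_arbitrary_ngd_optimiality} followed by reading off when each penalty term crosses $\varepsilon$. The main obstacle I expect is the constant bookkeeping in passing from the implicit threshold $T^*$ to the explicit additive bound: one must track the two partial-sum estimates across the three regimes $r<\tfrac12$, $r=\tfrac12$, and $r>\tfrac12$, in which $\sum\eta_t^2$ is respectively polynomially growing, logarithmic, and convergent, and verify in each that the dominant threshold delivers at least the stated constant while the subdominant one only improves the bound.
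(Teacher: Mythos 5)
Your proposal follows essentially the same route as the paper's own proof: both reduce the theorem to \Cref{lem:app.optimality.deterministic_arbitrary_ngd_optimiality}, lower bound the implicit threshold $\li^*$ by observing that the sum of the two nonnegative penalty terms stays above $\eps$ as long as either term does, evaluate $\sum \sst$ and $\sum \sst^2$ exactly for the constant schedule and via integral comparisons for the decaying one (using $\eps \leq \nicefrac{\Dz\sm}{2}$ to absorb $\nicefrac{\eps}{\sm}$ and the second assumption on $\eps$ to keep the estimates nondegenerate), and obtain the exponential bound for $r \geq 1$ from the logarithmic/bounded growth of $\sum \sst$. Your elementary bound $\sum_{\ii=1}^\li \ii^{-2r} \geq \li^{1-2r}$ even sidesteps the separate $r = \nicefrac 1 2$ edge case that the paper handles with a dedicated logarithmic estimate, and the remaining constant bookkeeping you defer is of the same kind (and the same looseness) as in the paper's own write-up.
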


\begin{proof}
	First note that the definition of $T_\eps$ starts with $\iterationx 0$ instead of $\xfin$. For the sake of consistency with other works, we will also apply this convention in our result by denoting $\pare{\iterationx 0, \xfin, \dots} \gets \pare{\xfin, \iterationx 2, \dots}$. We first consider constant stepsizes $\sst \equiv \ssi \li^{-r}$. Setting $\iterationx 0 = 0$ and applying \Cref{lem:app.optimality.deterministic_arbitrary_ngd_optimiality} yields
	\begin{align*}
		\li^* 
		&= \inf \set{\li \in \N \suchthat \eps 
			> \frac{\Dz - \frac \eps \sm}{2\ssi \li^{1-r}} + \frac{\ssi \sm}{8 \li^r} }\\
		&\geq \inf \set{\li \in \N \suchthat \eps 
			> \frac{\Dz}{4 \ssi \li^{1-r}} + \frac{\Aa}{8\li^r} },
	\end{align*}
	where we used our assumption $\eps \leq \frac{\Dz \sm} 2$ in the last line. For $r \in (0,1)$ we calculate
	\begin{align*}
		\eps > \frac{\Dz}{4 \ssi \li^{1-r}} + \frac{\Aa}{8\li^r}
		&\Rightarrow \eps > \max \set{\frac{\Dz}{4 \ssi \li^{1-r}} , \frac{\Aa}{8\li^r} }\\
		&\Leftrightarrow \li > \max \set{\pare{\frac{\Dz}{4 \ssi \eps}}^{\frac 1 {1-r}}, \pare{\frac{\Aa}{8 \eps}}^{\frac 1 r}}.
	\end{align*}
	In particular we have $\li^* \geq \max \set{\pare{\frac{\Dz}{4 \ssi \eps}}^{\frac 1 {1-r}}, \pare{\frac{\Aa}{8 \eps}}^{\frac 1 r}}$ and hence 
	\begin{align*}
		T_\eps\pare{A_c, \FcDzL} \geq \max \set{\pare{\frac{\Dz}{4 \ssi \eps}}^{\frac 1 {1-r}}, \pare{\frac{\Aa}{8 \eps}}^{\frac 1 r}}.
	\end{align*}
	For $r \geq 1$ we have
	\begin{align*}
		\eps > \frac{\Dz}{4 \ssi \li^{1-r}} + \frac{\Aa}{8\li^r}
		&\Rightarrow \eps > \max\set{\frac{\Dz \li^{r-1}}{4 \ssi}, \frac{\Aa}{8 \li^r}}\\
		&\Leftrightarrow \li > \pare{\frac{\Aa}{8\eps}}^{\frac 1 r} \text{ and } \li^{r-1} > \frac{4 \ssi \eps}{\Dz}
	\end{align*}
	In the case $r=1$ this implies $T_\eps\pare{A_c^r, \FcDzL} \geq \infty$ for all $\eps < \frac{\Dz}{4\ssi}$. For $r>1$ the same holds for $\eps < \pare{\ssi \sm}^\alpha \pare{\frac{\Dz}{\ssi}}^\beta$, where $		\alpha \coloneqq \frac{r-1}{2r - 1}$ and $\beta \coloneqq \frac r {2r-1}$.
	
	Next we consider decreasing stepsizes $\ssi = \ssi \, \ii^{-r}$. Therefore note that, for $q \neq 1$,
	\begin{align*}
		\frac {\ssi  \pare{\pare{\li + 1}^{1-q} - 1}} {1-q}
		= \ssi \int_1^{\li+1} \ii^{-q} d\ii
		\leq  \ssi \iSum \ii^{-q}
		\leq \ssi \pare{1 + \int_1^\li \ii^{-q} d\ii}
		= \ssi \pare{1 + \frac {\li^{1-q} - 1} {1-q} }.
	\end{align*}
	In particular we have 
	\begin{align*}
		2 \eps \iSum \sst &\leq 2\eps \ssi \pare{1 + \frac {\li^{1-r} - 1} {1-r}} \text{ and}\\
		\frac \sm 4 \iSum \sst^2 & \geq \frac{\ssi^2\sm}{4\pare{1-2r}} \pare{\pare{\li + 1}^{1-2r} - 1}.
	\end{align*}
	We first consider $r \in (0,1) \setminus \set{\nicefrac 1 2}$. Plugging into \Cref{lem:app.optimality.deterministic_arbitrary_ngd_optimiality} yields
	\begin{align*}
		T^* 
		&\geq \inf \set{\li \in \N \suchthat \eps > \frac{\Dz\pare{1-r}}{4 \ssi \li^{1-r}} 
			+ \frac{\ssi \sm \pare{1-r}\pare{\li^{1-2r} - 1} }{8 \pare{1-2r} \li^{1-r} } }.
	\end{align*}
	By our assumptions on $\eps$ we get $\li^* \geq 2$ and hence can assume $\li \geq 2$ which in turn implies $\frac{\li^{1-2r} - 1}{1-2r} \geq \frac{\li^{1-2r}}{2}$. Therefore we get
	\begin{align*}
		\li^*
		&\geq \inf \set{\li \in \N \suchthat \eps > \frac{\Dz\pare{1-r}}{4 \ssi \li^{1-r}} 
			+ \frac{\ssi \sm \pare{1-r}}{16 \li^r } }\\
		&\geq  \max\set{
			\pare{\frac{\Dz\pare{1-r}}{4\ssi\eps}}^{\frac 1 {1-r}},
			\pare{\frac{\ssi \sm \pare{1-r}}{16 \eps}}^{\frac 1 r}}.
	\end{align*}
	Finally we have to consider the edge cases $r \in \set{\nicefrac 1 2, 1}$. Let $r = \nicefrac 1 2$, then 
	\begin{align*}
		\iSum \sst &\leq \ssi \pare{2 \sqrt \li - 1} \leq 2\ssi\sqrt\li,\\
		\iSum \sst^2 &\geq \ssi^2 \int_1^{\li+1} \ii^{-1} d\ii
		\geq \ssi^2 \log \pare{\li}
	\end{align*}
	and hence
	\begin{align*}
		\li^* 
		&\geq \inf \set{\li \in \N \suchthat \eps > \frac{\Dz}{8 \ssi \sqrt \li} + \frac{\ssi \sm \log \pare \li}{16 \sqrt \li}}\\
		&\geq \inf \set{\li \in \N \suchthat \eps > \frac{\Dz \pare{1-r}}{4 \ssi \li^r} + \frac{\ssi \sm \pare{1-r} }{16 \li^r}}\\
	\end{align*}
	and we can hence proceed as before. Note that a more careful analysis can additionally show tightness of the $\log \pare \li$ dependence.
	Now let $r = 1$ and note that 
	\begin{align*}
		\iSum \sst = \ssi \iSum \ii^{-1} \leq \ssi \pare{1 + \log \pare \li}.
	\end{align*}
	In particular
	\begin{align*}
		\li^* 
		&\geq \inf \set{\li \in \N \suchthat \eps > \frac{\Dz}{4 \ssi \pare{1 + \log \pare \li}}}
		\geq \exp\pare{\frac{\Dz}{4 \ssi \eps} - 1}
	\end{align*}
	and hence $T_\eps\pare{A_d^r, \FcDzL} \geq e^{\frac{\Dz}{8\ssi \eps}}$ for $\eps \leq \frac{\Dz}{8\ssi}$.
\end{proof}

\subsection{Stochastic Setting}
\label{sec:app.optimality.stoch}

Finally we will extend the above result to the stochastic setting.

\begin{theorem}\label{thm:app.optimality.stochastic}
	Let $\FcDzL$ be the set of functions that satisfy \lb\ and \lsmooth. Furthermore let $\Oc_{\varsym, p}$ denote the set of stochastic gradient oracles that satisfy \pBCM. Suppose $\eps \leq \frac{\Dz \sm}{2}$ and let $\ssi, B, q > 0, r \in (0,1)$. Let $\Ac$ denote \algname{NGD}\ with parameters $\sst \equiv \ssi \li^{-r}, \Bt \equiv \max\set{1,B\li^q}$ and the mini-batch gradient estimator $\gt = \frac 1 {\Bt} \sum_{j = 1}^{\Bt} \nf{\xt,\xit^{(j)}}$, where $\xit^{(1)}, \dots, \xit^{(\Bt)} \iid \xit$. Then there exists a function $F \in \FcDzL$ and oracle $\nabla f(\cdot, \cdot) \in \Oc_{\varsym,p}$ such that $\Ac$ requires at least
	\begin{align*}
		m_\eps^\E \geq 
		\max \set{
			\pare{\frac{\Dz}{6 \ssi \eps}}^{\frac 1 {1-r}}, \pare{\frac{\Aa}{12 \eps}}^{\frac 1 r},
			B\pare{\frac{\Dz}{6 \ssi \eps}}^{\frac {1+q} {1-r}}, B\pare{\frac{\Aa}{12 \eps}}^{\frac {1+q} r},
			\frac 1 {B^{\frac 1 q}} \pare{\frac{\varsym}{28 \eps}}^{\frac{p \pare{q + 1}}{q\pare{p-1}}}.
		}
	\end{align*}
	oracle calls to generate an iterate with $\Expnorm{\nFxt} \leq \eps$.
\end{theorem}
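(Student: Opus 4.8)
\emph{Overall strategy.} The plan is to prove two independent iteration-complexity lower bounds on the horizon $\li^{*}$ needed before some iterate satisfies $\Expnorm{\nFxt} \leq \eps$ --- one that penalizes the step-size order $r$ and one that penalizes the batch-size order $q$ --- and then convert them into sample-complexity bounds through the per-iteration cost $\Bt = \max\set{1, B\li^q}$, using $m_\eps^\E = \li^{*} \cdot \Bt$ together with a case split on whether $B\pare{\li^{*}}^q \leq 1$ or $> 1$.

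\emph{Deterministic terms.} First I would reuse the deterministic hard instance of \Cref{lem:app.optimality.deterministic_arbitrary_ngd_optimiality} (the sawtooth-gradient function of \Cref{fig:lower_bound_function}), equipped for now with the exact oracle $\gt = \nFxt$, which trivially lies in $\Oc_{\varsym, p}$. Exactly as in the constant-step-size branch of \Cref{thm:app.optimality.deterministic_polynomial_ngd_optimiality}, the schedule $\sst \equiv \ssi \li^{-r}$ simultaneously penalizes crawling (too-small steps, the $\Dz$-term) and overshooting (too-large steps, the $\sm$-term), forcing $\li^{*} \geq \max\set{\pare{\Dz/(6\ssi\eps)}^{1/(1-r)}, \pare{\Aa/(12\eps)}^{1/r}}$. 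Multiplying by the per-iteration cost gives terms one/two in the regime $B\pare{\li^{*}}^q \leq 1$ (cost $1$) and terms three/four in the regime $B\pare{\li^{*}}^q > 1$ (cost $B\pare{\li^{*}}^q$, producing the $B$-prefactor and exponents $\tfrac{1+q}{1-r}$, $\tfrac{1+q}{r}$).

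\emph{Noise floor.} Next I would equip a constant-gradient (locally linear) piece of the instance with a genuinely stochastic oracle that is unbiased, obeys \pBCM, and points against $\nFxt$ as often as \pBCM\ allows: a two-point law placing mass $\Theta\pare{\pare{\varsym/\eps}^p}$ on a far wrong-direction atom and the complementary mass on a near right-direction atom, calibrated so that $\Expnorm{\gt - \nFxt} = \Theta\pare{\varsym \Bt^{-(p-1)/p}}$ --- the reverse of the von Bahr--Esseen scaling from \Cref{lem:app.von_Bahr_and_Essen}. The crux is to show that whenever $\varsym \Bt^{-(p-1)/p} \gtrsim \eps$, the normalized mini-batch direction $\gt/\norm{\gt}$ disagrees with $\nFxt$ with probability bounded away from zero, so $\Exp{\phit}$ cannot be pushed far enough above $0$ and $\Expnorm{\nFxt}$ cannot fall below $\eps$. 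This forces $\Bt \gtrsim \pare{\varsym/\eps}^{p/(p-1)}$; substituting $\Bt = B\pare{\li^{*}}^q$ yields $\li^{*} \gtrsim B^{-1/q}\pare{\varsym/(28\eps)}^{p/(q(p-1))}$, and multiplying by $\Bt$ produces the fifth term $B^{-1/q}\pare{\varsym/(28\eps)}^{p(q+1)/(q(p-1))}$.

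\emph{Combination and main obstacle.} Since a single instance must exhibit both obstructions at once, $\li^{*}$ is at least the maximum of the two iteration bounds and $m_\eps^\E = \li^{*}\Bt$ is at least the claimed maximum after the case split; to realize all five terms with one pair $\pare{F, \nabla f}$ as stated, I would place the deterministic sawtooth in one coordinate and the noisy linear part in an orthogonal coordinate and propagate both blocks through the shared normalization $\norm{\gt}$. I expect the noise floor to be the hard part: it demands a \emph{reverse} concentration statement --- a lower bound on $\Expnorm{\gt - \nFxt}$ and on $\mathbb{P}\pare{\phit \leq 0}$ for the mini-batch average, rather than the usual upper bound --- turning ``\pBCM\ permits a wrong direction with probability $\pare{\varsym/\eps}^p$'' into a quantitative bound with the exact heavy-tailed rate $\Bt^{-(p-1)/p}$, together with verifying that the full-vector normalization in the product construction does not let one coordinate mask the obstruction in the other.
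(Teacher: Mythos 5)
Your overall architecture does match the paper's (sawtooth hard instance for the two step-size terms, an adversarial oracle for the noise term, multiplication by the per-iteration cost, and a case split on $B\li^q \lessgtr 1$), but your noise oracle is oriented the wrong way, and as stated it fails. You put the rare, far atom in the \emph{wrong} direction and nearly all mass on a near-correct atom. Then for any batch size the normalized step is correct whenever no rare atom is drawn, which happens with probability at least $\pare{1-q}^{\Bt}$; with your mass $q = \Theta\pare{\pare{\eps/\varsym}^{p}}$ (as literally written, $\pare{\varsym/\eps}^p$, which is not even a probability) this is $1-o(1)$ when $\Bt = 1$ --- exactly the regime $B\li^q \leq 1$ that the theorem must cover. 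So \nsgd\ marches straight to the flat region and no noise term emerges. The paper's oracle is the mirror image: the \emph{typical} atom is a small wrong-direction vector $-\rho\nabla F(x)$, and the \emph{rare} atom, with probability $\delta(x) = \min\set{1, \pare{2\pare{1+\rho}\norm{\nabla F(x)}/\varsym}^{p/(p-1)}}$, is a huge correct-direction vector carrying the entire unbiasedness correction. A correct step then requires catching at least one rare atom, which has probability $1-\pare{1-\delta}^{\Bt} \leq \min\set{1, 2\Bt\delta}$, and this is at most $\nicefrac 1 2$ precisely when $\Bt \lesssim \pare{\varsym/\eps}^{p/(p-1)}$. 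Your exponent is also wrong: for any two-point unbiased oracle whose typical atom points the wrong way, unbiasedness forces $\pi M \gtrsim \norm{\nabla F}$ (with $\pi, M$ the rare atom's probability and magnitude) while \pBCM\ forces $\pi M^p \lesssim \varsym^p$, so $\pi \gtrsim \pare{\norm{\nabla F}/\varsym}^{p/(p-1)}$; the heavy-tailed scale is $p/(p-1)$, not $p$. (A reversed construction can be salvaged, but only by tuning the wrong-atom mass to $\Theta\pare{1/\Bt}$ and its magnitude to $\Theta\pare{\Bt\eps}$, not the problem-dependent mass you propose.) Relatedly, your calibration target $\Exp{\norm{\gt - \nFxt}} = \Theta\pare{\varsym\Bt^{-(p-1)/p}}$ is a red herring: a large expected deviation is neither necessary nor sufficient, and the paper never lower-bounds it; what is controlled is the sign-flip probability of the batch average.

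Two further gaps. First, ``disagrees with $\nFxt$ with probability bounded away from zero'' is too weak, and a small $\Exp{\phit}$ does not by itself keep $\Expnorm{\nFxt}$ above $\eps$: one must track where the iterates actually are. The paper arranges the correct-step probability to be at most $\nicefrac 1 2$, so that the iterates perform a $\pm\ssi\li^{-r}$ random walk whose median never exceeds the starting point; since every lattice point $x \leq 0$ has $\abs{F'(x)} = 3\eps$ by construction, this gives $\Expnorm{\nFxt} \geq 3\eps \cdot \P\pare{\xt \leq 0} > \eps$. Second, the two-orthogonal-coordinates product you propose for combining the obstructions is both unnecessary and genuinely broken: under normalization the step taken in the sawtooth coordinate is $\sst \, g_{t,1}/\norm{\gt}$, which is random and no longer equal to $\pm \sst$ once the other coordinate's heavy-tailed noise enters $\norm{\gt}$, so the lattice structure underlying the deterministic argument collapses --- this is precisely the ``masking'' issue you flag but do not resolve. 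The paper avoids it entirely: it uses a single one-dimensional instance, defines the noise proportionally to $\nabla F(x)$ on that same function, and obtains the maximum of all five terms by a case distinction on whether $\max\set{1, B\pare{\li^*_d}^q}$ exceeds a constant multiple of $\pare{\varsym/\eps}^{p/(p-1)}$ (with $\li^*_d$ the deterministic iteration bound), rather than by exhibiting both obstructions simultaneously in orthogonal directions.
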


When comparing to the corresponding upper bound \eqref{eq:app.missing.exp.param_free.sample_complexity} we can see that both bounds are tight in all parameters. This is due to $\frac 1 n \sum_{i=1}^n a_i \leq \max\set{a_1, \dots, a_n} \leq \sum_{i=1}^n a_i$, hence the maximum and sum notation are equivalent up to constants.

\begin{proof}
	\newcommand{\opr}{\pare{1+\rho}}
	\newcommand{\ssl}{\bar{\ssi}}
	\newcommand{\Bl}{\bar{B}}
	\newcommand{\detLb}{\li^*_d}
	\newcommand{\stochLb}{\li^*_s}
	The idea behind the proof is the following. We will again use a very similar construction to \eqref{eq:app.optimality.deterministic_arbitrary_ngd_optimiality.derivative_construction} and add a noise oracle on top. The goal of this noise oracle will be to point in the wrong direction with the highest possible probability, effectively slowing down the progress we make even more. As this noise oracle may however lead to iterates going below $\xfin$, we need to slightly modify the construction of $F$. 
	
	\paragraph{Construction of the hard function $F$.} To this end, let $\ssl \coloneqq \ssi \li^{-r}$, $\tau_k \coloneqq k \ssl$ for $k \in \Z$, 
	\begin{align*}
		\detLb 
		&\coloneqq \inf \set{\li \in \N \suchthat \eps > \frac{\Dz - \frac \eps \sm}{3 \li \ssl} + \frac{\sm \ssl}{12}}\\
		&\geq \max \set{\pare{\frac{\Dz}{6 \ssi \eps}}^{\frac 1 {1-r}}, \pare{\frac{\Aa}{12 \eps}}^{\frac 1 r}}.
	\end{align*}
	where we used $\eps \leq \frac {\Dz \sm} 2$ in the last inequality as before, and define
	\begin{align}\label{eq:app.optimality.stochastic.derivative_construction}
		F'(x) &\coloneqq
		\begin{cases}
			g_{\ssl} \pare{x-\tau_\ii}, & x \in [\tau_\ii, \tau_{\ii+1}) ,\, \ii + 1 \leq \detLb\\
			-2\eps + L\pare{x-\iterationtau {\detLb}}, & x \in \left(\iterationtau {\detLb}, \iterationtau {\detLb*} + \frac{3\eps}{L}\right]\\
			0, & \text{otherwise},
		\end{cases}
	\end{align}
	where
	\begin{align*}
		g_{\ssi} &\colon \left[0, \ssi \right] \to \R \quad \text{ is given by } \quad x \mapsto 
		\begin{cases}
			-3\eps + \sm x, & x \leq \frac {\ssi} 2\\
			-3\eps + {\ssi \sm} - \sm x, & x > \frac \ssi 2.
		\end{cases}
	\end{align*}
	As before, we define the hard function as $F(x) \coloneqq \Dz + \int_0^x F'(t) d t$. In the following we will denote the derivative of $F$ using $\nF$ to align with the gradient oracle notation. Now firstly note that we can use the deterministic \Cref{lem:app.optimality.deterministic_arbitrary_ngd_optimiality} to rule out any stepsize that satisfies $\ssl \geq \frac{8 \eps}{\sm}$: In this case we would have
	\begin{align*}
		\frac{\Dz - \frac \eps \sm}{2\li\ssl} + \frac{\sm \li \ssl^2}{8 \li \ssl}
		= \frac{\Dz - \frac \eps \sm}{2\li\ssl} + \frac{\sm\ssl}{8}
		\geq 0 + \eps,
	\end{align*}
	where we used $\eps \leq \frac{\Dz \sm}{2}$ in the last inequality. Hence we may assume $\ssi \li^{-r} \leq \frac{8\eps}{\sm}$. Under this assumption\footnote{Note that for $\ssi \li^{-r} \geq \frac {12\eps} \sm$ we would get $\lim_{x\to-\infty} F(x) = -\infty$ for $F$ defined by \eqref{eq:app.optimality.stochastic.derivative_construction}.} we again have $F(0) - \inf_x F(x) \leq \Dz$ and that $F$ is $L$-smooth.
	\newcommand{\nFx}{\nF \pare x}
	\newcommand{\nnFx}{\norm{\nFx}}
	\newcommand{\nsqnFx}{\nnFx^2}
	{
	\paragraph{Construction of the noise oracle $\nf{x,\xi}$.}
	We will construct the mentioned oracle, which aims to point in the wrong direction with the maximal probability. This oracle construction follows a similar idea as \citep[Theorem 3]{yang2023two_sides}. To construct the oracle, let $\rho > 0$ to be defined later and define $\alpha \coloneqq \frac p {p-1}$,
	\newcommand{\dx}{\delta \pare{x}}
	\newcommand{\omdx}{\pare{1-\dx}}
	\begin{align*}
		\dx \coloneqq \min\set{1,\pare{\frac{2 \opr \norm{\nF \pare x}}{\varsym}}^\al}.
	\end{align*}
	This will be the probability of the oracle returning \emph{the correct} direction. Now let
	\begin{align*}
		\nf{x, \xi} \coloneqq 
		\begin{cases}
			-\rho \nF(x), & \xi \geq \dx\\
			\pare{1 + \frac{\pare{1-\dx} \opr} {\dx}}\nF(x),& \xi < \dx
		\end{cases}
	\end{align*}
	and $\xi \sim \unif\pare{[0,1]}$. Straightforward calculations yield 
	\begin{align*}
		\Exp{\nf{x,\xi}} 
		&= \nF\pare x \pare{\omdx(- \rho) + \dx \pare{1 + \frac{\omdx \opr} {\delta \pare x}}}\\
		&= \nF \pare x \pare{ \omdx(- \rho) +\dx + \omdx \opr }\\
		&= \nF \pare x 
	\end{align*}
	and 
	\begin{align*}
		\Exp{\norm{\nf{x, \xi} - \nF \pare x}^p}
		&= \omdx \opr^p \nnFx^p + \delta(x) \pare{\frac{\omdx\opr}{\dx} \nnFx }^p\\
		&= \opr^p \nnFx^p \pare{\omdx + \frac{\omdx^p}{\dx^{p-1}}}\\
		&\leq \opr^p \nnFx^p \pare{1 + \frac 1 {\dx^{p-1}}}\\
		&\leq \frac{2\opr^p\nnFx^p}{\dx^{p-1}} \leq \varsym^p .
	\end{align*}
	In particular $\nf{\cdot, \cdot} \in \Oc_{\sigma,p}$.}
	\newcommand{\sepsa}{\pare{7\eps}^\al}
	\paragraph{The behaviour of \nsgdName on the constructed function and oracle.} Finally we are able to show the lower bound by analysing the behaviour of \nsgdName on our constructed objects. 
	Firstly it is clear that we can upper bound the stochastic progress by the deterministic progress, i.e.\ $\xtp \leq \ssl \ii$. In particular, with the same arguments as in \Cref{lem:app.optimality.deterministic_arbitrary_ngd_optimiality,thm:app.optimality.deterministic_polynomial_ngd_optimiality}, we get that $\nnFxt > \eps$ for all $\ii \in \left[\detLb\right]$ and we hence can lower bound the iteration complexity by $\li \geq \detLb$. We next differentiate two cases.\\
	\textbf{Case 1: {$\max\set{1,B\pare{\detLb}^q} \geq \frac{\varsym^\al}{2\sepsa}$.}} In this case nothing else needs to be done and we can lower bound the number of oracle calls required to find an $\eps$-stationary point by
	\begin{align*}
		\detLb \cdot \max\set{1,B\pare{\detLb}^q}
		&= \max\set{\detLb, B \pare{\frac{\Dz}{6\ssi\eps}}^{\frac{1+q}{1-r}}, B \pare{\frac{\ssi\sm}{12\eps}}^{\frac{1+q}{r}} }.
	\end{align*}
	
	\textbf{Case 2: {$\max\set{1,B\pare{\detLb}^q} < \frac{\varsym^\al}{2\sepsa}$}.} In this case we will make use of the gradient oracle to construct a stronger lower bound $\stochLb > \detLb$. Therefore first note that, due to the constant stepsize and $\xfin = 0$, the iterations $\xt$ always stay on the lattice $\Gamma = \ssl \Z$. Furthermore, by the construction of $F$, we have that
	\begin{align*}
		\fas x \in \Gamma\colon \nFx \in [-3\eps, 0],
	\end{align*}
	which{, for $\rho \leq \frac 1 6$,} in particular implies
	\begin{align}\label{eq:app.optimality.stochastic.delta_bound}
		\fas x \in \Gamma \colon { \delta(x) \leq \pare{\frac{6 \opr\eps}{\varsym}}^\al \leq \pare{\frac{7\eps}{\varsym}}^\al } \eqqcolon \delta.
	\end{align}
	\newcommand{\ind}{1}
	Now define the random variable $\zetat \coloneqq 1_{\set{\gt < 0}}$, where $\ind_A(\omega) = \begin{cases} 1, & \omega \in A\\ 0, & \text{o.w.}\end{cases}$, and compute
	\begin{align}\label{eq:app.optimality.stochastic.x_t}
		\xtp = \xt + \ssl \pare{2 \zetat - 1} 
		= \ssl \sum_{\tau = 1}^\ii \pare{2 \iterationzeta \tau - 1}
		\eqqcolon 2\ssl \St - \ssl \ii,
	\end{align}
	where $\St \coloneqq \sum_{\tau = 1}^\ii \iterationzeta \tau$. Furthermore, let $\Bl = \max\set{1,B\li^q}$ and $\rho \leq \min\set{{\nicefrac 1 6}, \nicefrac 1 {\Bl}}$, then we have
	\begin{align}\label{eq:app.optimality.stochastic.prob_bound_temp}
		\P \pare{\gt < 0}
		&= \P \pare{\frac 1 {\Bl}  \sum_{j = 1}^{\Bl} \nf{\xt, \xit^{(j)}} < 0} 
		= 1 - \pare{1-\delta\atxt}^{\Bl} 
		\stackrel{\eqref{eq:app.optimality.stochastic.delta_bound}}{\leq} 1 - \pare{1-\delta}^{\Bl}.
	\end{align}
	By definition $\pare{1-\delta}^{\Bl} = \exp\pare{\Bl \log \pare{1-\delta}}$ and
	\begin{align*}
		{\log\pare{1-\delta}
		\geq \frac{-\delta}{1 - \delta}
		= - \frac{\sepsa}{\varsym^\al - \sepsa}
		\geq - \frac{2\sepsa}{\varsym^\al}
		},
	\end{align*}
	where we used {$\log\pare{1+x} \geq \frac{x}{1+x}$ for $x \in (-1,0]$ in the first, and $1 <  \frac{\varsym^\al}{2\sepsa}$ by our case 2 assumption} in the last inequality.
	Plugging into \eqref{eq:app.optimality.stochastic.prob_bound_temp} yields
	\begin{align}\label{eq:app.optimality.stochastic.prob_bound}
		\P \pare{\gt < 0}
		\leq 1 - \exp\pare{- \Bl {\frac{2\sepsa}{\varsym^\al}}}
		\leq \min\set{1, \Bl {\frac{2\sepsa}{\varsym^\al}}},
	\end{align}
	where we used $e^x \geq \max\set{0, 1 + x}$ in the last inequality. Next up let $\li \in \N$ such that $\Bl \leq {\frac{\varsym^\al}{4 \sepsa}}$, then
	\begin{align*}
		\P\pare{\gt < 0}
		\stackrel{\eqref{eq:app.optimality.stochastic.prob_bound}}{\leq}
		\min\set{1, \Bl {\frac{2\sepsa}{\varsym^\al}}}
		\leq \frac 1 2
	\end{align*}
	and $\zetat$ is hence a Bernoulli random variable with probability at most $\nicefrac 1 2$. In particular, we have $\median\pare{\St}\leq \floor{\nicefrac \ii 2}$ and hence
	
	\begin{align*}
		\frac 1 2 
		\leq \P\pare{\St \leq \frac \ii 2} 
		= \P \pare{2\ssl \St \leq \ssl \ii} 
		\stackrel{\eqref{eq:app.optimality.stochastic.x_t}}{=} \P \pare{\xtp \leq 0}.
	\end{align*}
	Finally note that all $x$ in $\pare{\Gamma \cap (-\infty, 0]}$ satisfy $\nF(x) = - 3\eps$ and hence
	\begin{align*}
		\Exp{\norm{\nF\atxt}}
		= 3 \eps \P \pare{\xt \leq 0} + \Exp{\norm{\nF\atxt} 1_{\set{\xt > 0}}}
		> \eps + 0
	\end{align*}
	for all $\ii \in [\li]$. Summing up the results in \emph{Case 2}, we so far proved the auxiliary result that for any $\li$ with $\max\set{1,B\li^q} \leq {\frac{\varsym^\al}{4 \sepsa}}$ all iterates $\ii \in [\li]$ satisfy $\Exp{\norm{\nF\atxt}} > \eps$. By the assumption of case 2, this implies
	\begin{align*}
		\fas \li \leq \stochLb\colon \fas \ii \in [\li] \colon \Exp{\norm{\nF\atxt}} > \eps,
	\end{align*}
	where $\stochLb \coloneqq \pare{{\frac{\varsym^\al}{4 B\sepsa}}}^{\nicefrac 1 q} > \detLb$ with $\alpha = \frac p {p-1}$. In particular, we can lower bound the number of oracle calls required to reach an expected $\eps$-stationary point by
	\begin{align*}
		\stochLb \cdot B \pare{\stochLb}^q 
		= {\frac 1 {B^{\frac 1 q}} \pare{\frac{\varsym}{28 \eps}}^{\frac{p \pare{q + 1}}{q\pare{p-1}}} }.
	\end{align*}
	
	\paragraph{Combining.} Finally we are able to combine everything into our lower bound. Therefore, let
	\begin{align*}
		\li^* \coloneqq \max \set{
			\detLb, 
			B \pare{\frac{\Dz}{6\ssi\eps}}^{\frac{1+q}{1-r}}, 
			B \pare{\frac{\ssi\sm}{12\eps}}^{\frac{1+q}{r}}, 
			{\frac 1 {B^{\frac 1 q}} \pare{\frac{\varsym}{28 \eps}}^{\frac{p \pare{q + 1}}{q\pare{p-1}}} }}
	\end{align*}
	and note that for $\li \leq \li^*$, one of the above cases applies, showing
	\begin{align*}
		\fas \ii \in [\li] \colon \Exp{\norm{\nF\atxt}} > \eps.
	\end{align*}
	This completes the proof.
\end{proof}

\subsection{Lower-Bound on the Convergence Measure}
\label{sec:app.missing.lb}

\begin{proof}
	In this proof, we consider a slightly more general step-size $\gamma = \sqrt{\frac{\Delta_1}{L}} \frac{1}{T^a}$ for any $a < 1$. Take $F(x) = \frac{1}{2} x^2 $ and $x_1 > 0$ (w.l.g.), then the step-size is $\gamma = \frac{x_0}{T^a}$. Denote by $N := \sqrt{2} T^a $. For the first $\ceil{N}$ iterations the update rule is $x_t = x_1 - \gamma (t-1) = x_1 (1 + \frac{1}{N} - \frac{t}{N} ) \geq 0$, for $t = 0, \ldots, \ceil{N}$. We compute for $T \geq \ceil{N} = \ceil{\sqrt{2} T^a} $
	\begin{eqnarray*}
		\sum_{t=1}^{T} \sqnorm{\nabla F(x_t)} &\geq& \sum_{t=1}^{\ceil{N}} \sqnorm{\nabla F(x_t)} = x_1^2 \sum_{t=1}^{\ceil{N}} \rb{ 1 + \frac{1}{N} - \frac{t}{N} }^2 
		\\
		&=& x_1^2 \ceil{N} \rb{1 + \frac{1}{N} }^2 - 2 x_1^2 \rb{1 + \frac{1}{N} } \frac{1}{N} \sum_{t=1}^{\ceil{N}} t + \frac{x_1^2}{N^2} \sum_{t=1}^{\ceil{N}} t^2 \\
		&=& x_1^2 \ceil{N} \rb{1 + \frac{1}{N} }^2 - 2 x_1^2 \rb{1 + \frac{1}{N} } \frac{1}{N} \frac{\ceil{N} ( \ceil{N} + 1 )}{2}  \\
		&&\qquad + \frac{x_1^2}{N^2} \rb{\frac{\ceil{N}^3}{3} + \frac{\ceil{N}^2}{2} + \frac{\ceil{N}}{6} } \\
		&\geq& x_1^2 \frac{\ceil{N} (N+1) (N - \ceil{N}) }{N^2} + \frac{ x_1^2 \ceil{N}^3 }{ 3 \, N^2 }   \\
		&\geq& -  \frac{x_1^2\ceil{N}^2 }{N^2} + \frac{ x_1^2 \ceil{N}^3 }{ 3 \, N^2 }   \\
		&\geq&  \frac{ x_1^2 \ceil{N}^3 }{ 6  \, N^2 } \rb{2 - \frac{6}{\ceil{N}}}  \geq \frac{ x_1^2 \ceil{N}^3 }{ 6  \, N^2 } \geq \frac{ x_1^2 N }{ 6 } =  \frac{ \sqrt{2} L \Delta_1 T^a }{3 } , 
	\end{eqnarray*}
	where in the second inequality we dropped the last two terms, in the third inequality we used $ \ceil{N} - N \leq 1$ and $N+1 \leq \ceil{N}^2$ for $N \geq 6$. The forth inequality holds by the assumption $N \geq 6$. It remains to divide both sides by $T$ and verify that in case $a = 1/2$, the assumption $T\geq 18$ implies the assumed conditions $T \geq \ceil{N} \geq 6$. Rearranging, we get
	$$
	\sqrt{ \Exp{ \sqnorm{\nabla F(\bar x_T)} } } \geq 
	\sqrt{ \frac{ \sqrt{2} L \Delta_1}{ 3 T } } \cdot T^{1/4} .
	$$
	Noting that $\sqrt{\frac{\sqrt 2}{3}} \geq \frac 2 3$ completes the proof.
\end{proof}

\clearpage

\section{ADDITIONAL EXPERIMENTS AND DETAILS}
\label{sec:app.add_experiments}
In this section we provide additional information and experiments for \Cref{sec:experiments}.

\subsection{Additional Details on Language Modelling}
\label{sec:app.add_experiments.lstm}
 
\paragraph{Additional Details}
All experiments were carried out on Nvidia RTX 3090 GPUs in an internal cluster. The total compute including preliminary experiments were approximately 380 GPU hours. Roughly 200 of these were required for preliminary experiments and parameter-tuning, 180 for the final experiments.

The AWD-LSTM \citep{LSTM2018Merity} is released under a BSD 3-Clause License, the Penn Treebank dataset \citep{PTB1993Marcus} under the LDC User Agreement for Non-Members and the WikiText-2 dataset \citep{Wikitext2017Merity} under the Creative Commons BY-SA 3.0 license.

The below experiments all follow the general structure outlined in \Cref{sec:experiments}.

\paragraph{Reasons for the Clipping Behaviour.} We additionally want to understand why the percentage of clipped gradients increases over time. Therefore \Cref{fig:app.additional_experiments.minibatch_grad_norm} examine the average minibatch-gradient norm per epoch, i.e.\ $\frac 1 E \sum_{\ii = \ii_0}^{\ii_0 + E - 1} \norm{\gt}$ where the epoch consists of $E$ mini-batches and starts at iteration $\ii_0$. The plot shows that, while the training loss decreases, the stochastic gradient norms increase. This observation is in line with previous observations on different tasks \citep[Chapter 8]{DeepLearning2016Goodfellow}. Therefore, while surprising at first, the increase in stochastic gradient norms is able to explain the increasing clipping percentage in hindsight.

\begin{figure*}[!ht]
	\centering
	\hfill
	\begin{subfigure}[c]{0.45\linewidth}
		\includegraphics[width=\linewidth]{./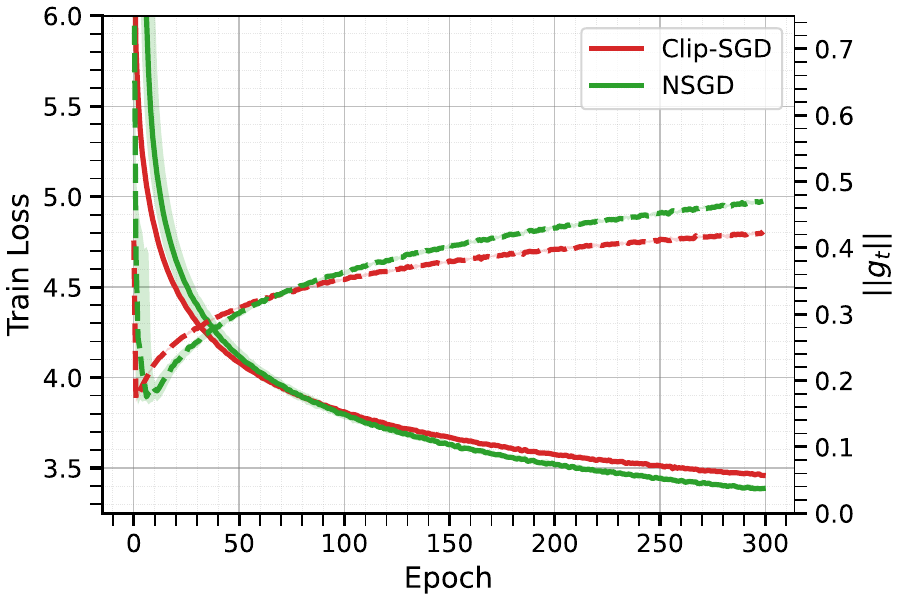}
		\caption{PTB}
		\label{fig:app.add_experiments.minibatch_grad_norm.ptb}
	\end{subfigure}
	\hfill
	\begin{subfigure}[c]{0.45\linewidth}
		\includegraphics[width=\linewidth]{./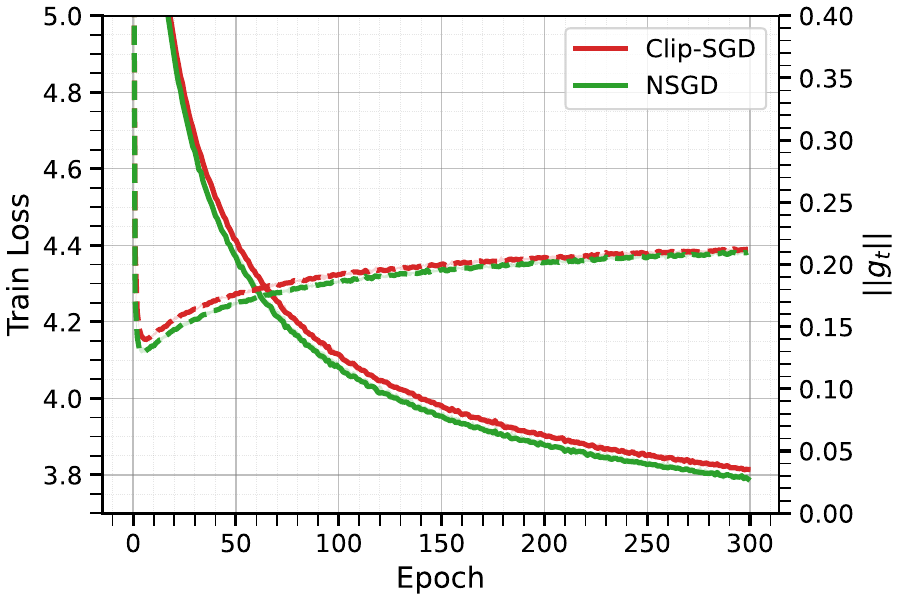}
		\caption{WikiText-2}
		\label{fig:app.add_experiments.minibatch_grad_norm.wikitext}
	\end{subfigure}
	\hfill
	\caption{All plots consider \clippedName\ and \nsgd\ with tuned parameters. Solid lines represent the training loss and correspond to the left y-axis. The dashed line corresponds to the right y-axis, and represents the average mini-batch gradient norm in an epoch. Shaded areas represent the minimal and maximal value within 5 seeds, the line the median.}
	\label{fig:app.additional_experiments.minibatch_grad_norm}
\end{figure*}

\subsection{Illustrating Drawbacks of Gradient Clipping Theory}
In the two sets of experiments below we compare several algorithms with and without parameter tuning on a simple quadratic model to better understand the influence of heavy tailed noise and the necessity of parameter tuning.

\paragraph{Comparison with tuned parameters.} In this set of experiments, we compare \nsgd\ using step-sizes $\sst =  \eta / {\sqrt \ii}$ \citep{yang2023two_sides} and \clippedSGD\ with $\sst =  \eta / {\sqrt \ii}, \, \clipt = \clipThresh \cdot \sqrt[4] \ii$ \citep{zhang2020adaptive,nguyen2023improved} and tune the pair $(\eta, \gamma)$ over a grid. We report the optimal parameters, $\eta$ for \nsgd\ and the pair $(\eta, \gamma)$ for \clippedSGD\ in \Cref{tab:comparison}. Convergence plots for different noise distributions are presented in \Cref{fig:intro.sgd_clip_nsgdm_comparison_Pareto_1_5,fig:intro.sgd_clip_nsgdm_comparison,fig:intro.sgd_clip_nsgdm_comparison_light}.   

We observe that in both heavy tailed and light tailed settings, when parameters are tuned, the two algorithms exhibit comparable convergence rate. However, for all three noise distributions we tested, the optimal parameter for \nsgd\ appeared to be $\eta = 0.5$, while \clippedSGD\ required different parameters to reach similar performance. This illustrates that \clippedSGD\ can be more sensitive to misspecification of its parameters compared to \nsgd. Moreover, \clippedSGD\ requires two parameters for tuning compared to only one parameter for \nsgd.

\begin{table}[ht]
\centering
\caption{Comparison of Tuned Parameters for \nsgd\ and \clippedSGD\ under Different Noise Distributions}
\begin{tabular}{|c|c|c|c|}
\hline
\textbf{Noise Distribution} & \textbf{Algorithm} & \textbf{Optimal} $\eta$ & \textbf{Optimal} $\gamma$ \\ \hline
Heavy tailed ($p=1.5$) & \nsgd\ & 0.5 & -- \\ 
(\Cref{fig:intro.sgd_clip_nsgdm_comparison_Pareto_1_5})  & \clippedSGD\ & 0.1 & 1 \\ \hline
Heavy tailed ($p=1.8$) & \nsgd\ & 0.5 & -- \\ 
(\Cref{fig:intro.sgd_clip_nsgdm_comparison}) & \clippedSGD\ & 100 & 0.001 \\ \hline
Light tailed & \nsgd\ & 0.5 & -- \\ 
(\Cref{fig:intro.sgd_clip_nsgdm_comparison_light}) & \clippedSGD\ & 5 & 0.1 \\ \hline
\end{tabular}
\label{tab:comparison}
\end{table}

\paragraph{Comparison without parameter tuning.} In \Cref{fig:intro.quadraticheavy_Pareto_1_5,fig:intro.quadraticheavy,fig:intro.quadraticlight}, we compare several adaptive algorithms with default parameter sequences, i.e., $\gamma = \eta = 1$. Specifically, we use $\eta_t = 1/\sqrt{t}$ for \sgd and \nsgd; $\eta_t = \sqrt{\alpha_t / t}$, $\alpha_t = 1/\sqrt{t}$ (where $\alpha_t$ is momentum sequence) \citep{yang2023two_sides}, and $\sst =  1 / {\sqrt \ii}, \, \clipt = \sqrt[4] \ii$ for \clippedSGD. This order of step-sizes is selected based on the theory for each algorithm under BV setting, where this order is known to give an asymptotically optimal convergence rate as $T\rightarrow \infty$. We observe that the performance of \clippedName significantly degrades when $\gamma$ and $\eta$ are not tuned, while the performance of \nsgd remains nearly the same. 

We also see that under heavier tailed noise such as Pareto distribution, the performance of untuned \sgd and \clippedName degrades substantially compared to the light tail noise setting, confirming the sensitivity of these algorithms to different noise distributions. On the other hand, \nsgd\ and its momentum variant (see \eqref{eq:app.nsgdm} in \Cref{sec:app.nsgdm}) are more stable and converge to a smaller neighbourhood around the optimal solution even under heavy tailed noise.

\begin{figure}[t!]
	\centering
	\hspace*{\fill}
    \subcaptionbox{\label{fig:intro.sgd_clip_nsgdm_comparison_Pareto_1_5}%
    Tuned \clippedName vs. \nsgd. When parameters are tuned, clipping is triggered at every iteration.}%
	[.45\textwidth]%
	{\includegraphics[width=\linewidth]{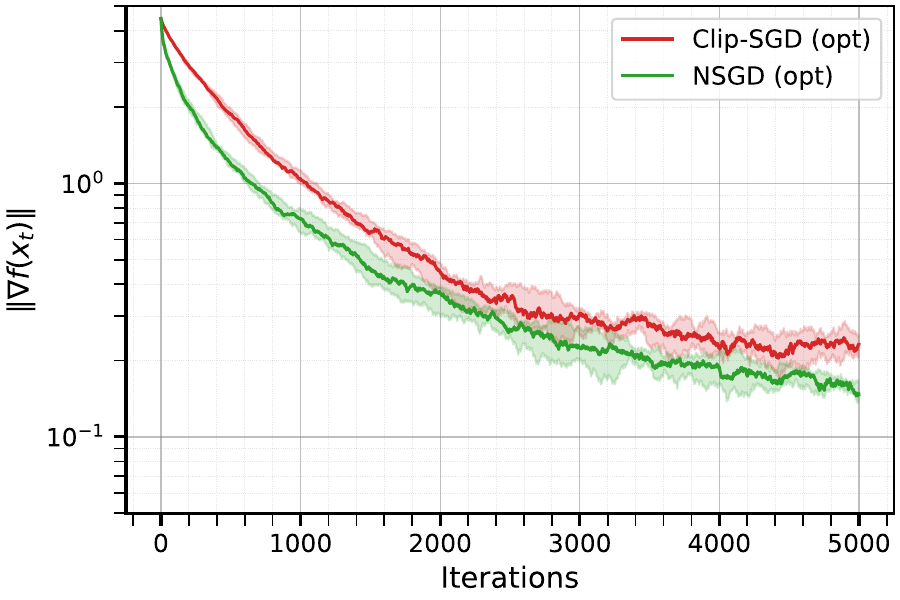}}
    \subcaptionbox{\label{fig:intro.quadraticheavy_Pareto_1_5}%
	Comparison \textit{without parameter tuning}. }%
	[.45\textwidth]%
	{\includegraphics[width=\linewidth]{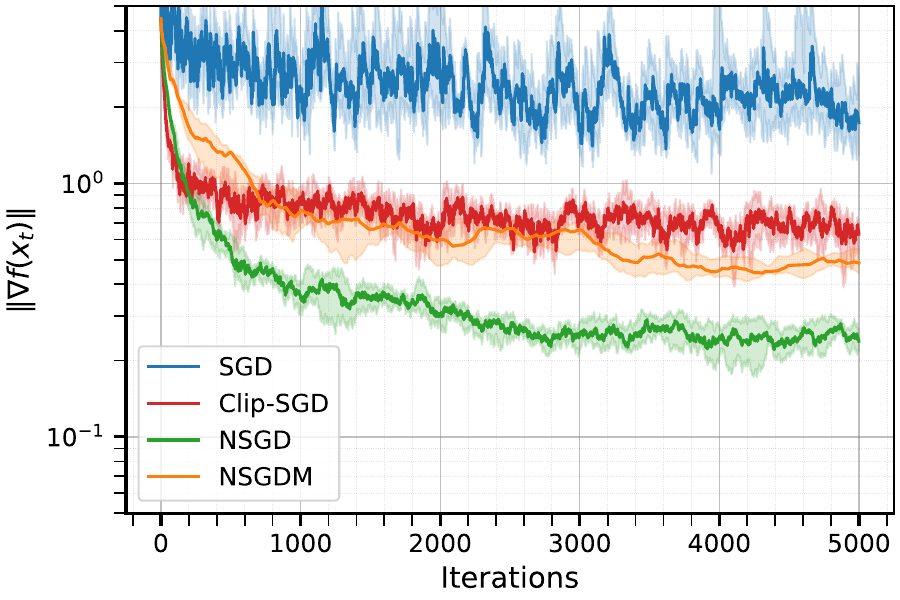}}
	\hspace*{\fill}
	\caption{Performance of different algorithms on a quadratic problem $f(x, \xi) = \frac{1}{2}\sqnorm{x}_2 + \langle x, \xi \rangle$, $d = 10$, where $\xi$ is a random vector with i.i.d.\,components drawn from a symmetrized Pareto distribution with tail index $p = 1.5$.}
	\label{fig:appendix.motivation_heavy_1_5}
\end{figure}

\begin{figure}[t!]
	\centering
	\hspace*{\fill}
    \subcaptionbox{\label{fig:intro.sgd_clip_nsgdm_comparison}%
    Tuned \clippedName vs. \nsgd. When parameters are tuned, clipping is triggered at every iteration.}%
	[.45\textwidth]%
	{\includegraphics[width=\linewidth]{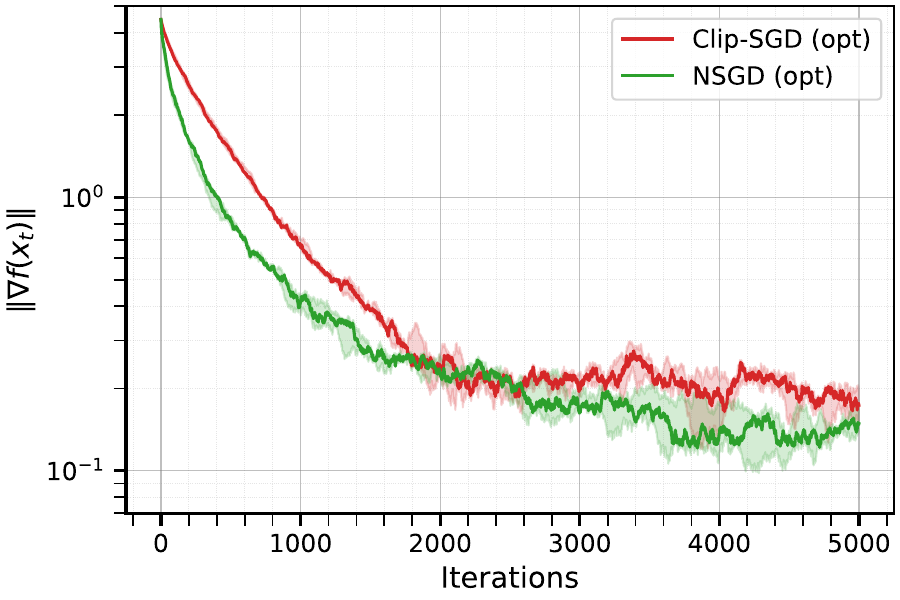}}
    \subcaptionbox{\label{fig:intro.quadraticheavy}%
	Comparison \textit{without parameter tuning}. }%
	[.45\textwidth]%
	{\includegraphics[width=\linewidth]{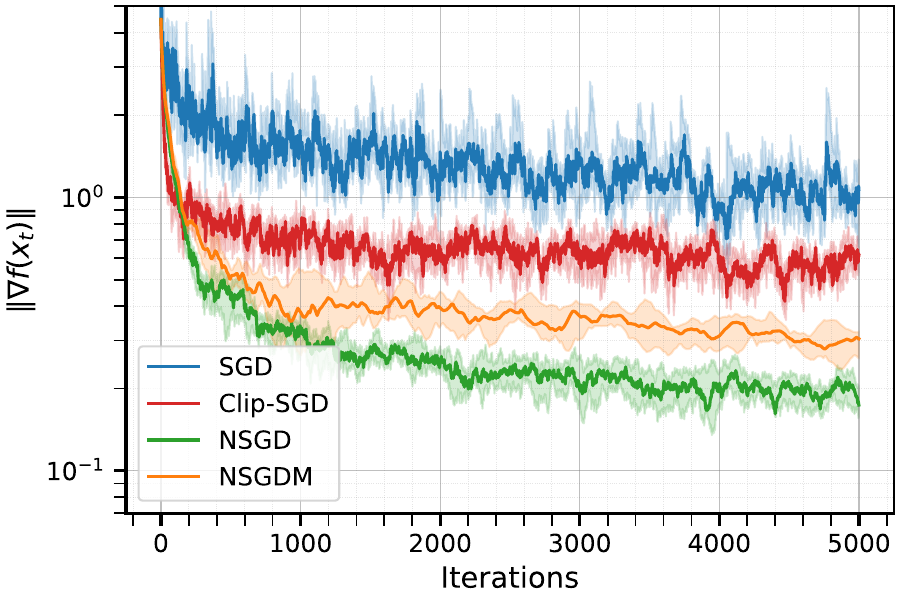}}
	\hspace*{\fill}
	\caption{Performance of different algorithms on a quadratic problem $f(x, \xi) = \frac{1}{2}\sqnorm{x}_2 + \langle x, \xi \rangle$, $d = 10$, where $\xi$ is a random vector with i.i.d.\,components drawn from a symmetrized Pareto distribution with tail index $p = 1.8$.}
	\label{fig:appendix.motivation_heavy}
\end{figure}

\begin{figure}[t!]
	\centering
	\hspace*{\fill}
    \subcaptionbox{\label{fig:intro.sgd_clip_nsgdm_comparison_light}%
    Tuned \clippedName vs. \nsgd. When parameters are tuned, clipping is triggered at every iteration.}%
	[.45\textwidth]%
	{\includegraphics[width=\linewidth]{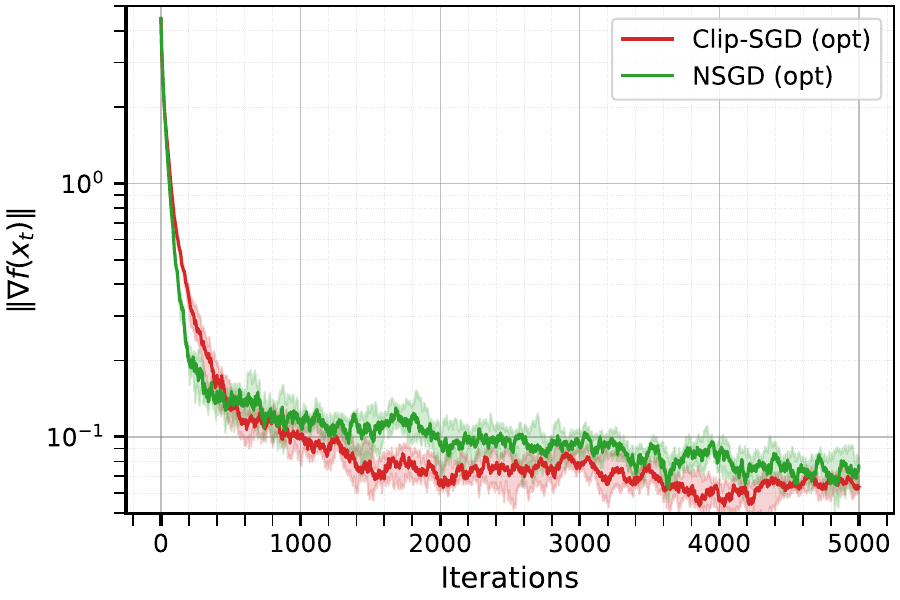}}
    \subcaptionbox{\label{fig:intro.quadraticlight}%
	Comparison \textit{without parameter tuning}. }%
	[.45\textwidth]%
	{\includegraphics[width=\linewidth]{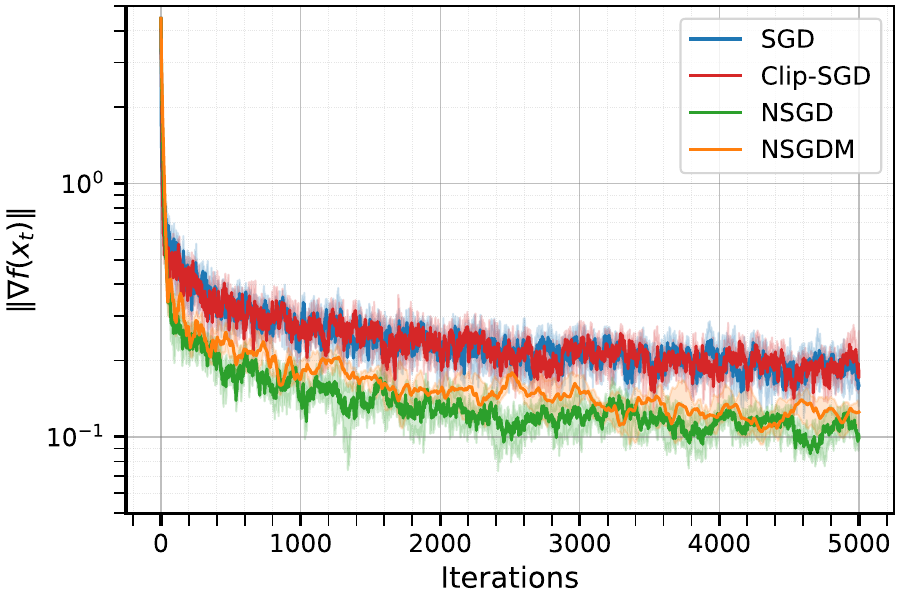}}
	\hspace*{\fill}
	\caption{Performance of different algorithms on a quadratic problem $f(x, \xi) = \frac{1}{2}\sqnorm{x}_2 + \langle x, \xi \rangle$, $d = 10$, where $\xi$ is distributed according standard Normal distribution.}
	\label{fig:appendix.motivation_light}
\end{figure}

\subsection{Verifying High Probability Bounds for SGD and NSGD(M)}
\label{sec:appendix:verify_hp}

In this section, we conduct experiments to verify high probability convergence for three algorithms: \sgdName, \nsgdName, \nsgdm. High probability convergence refers to the convergence rate of the average gradient norm depending linearly on $\log(1/\delta)$ as demonstrated in our \Cref{cor:main.optimal_hp}, where $\delta \in (0,1)$ is a failure probability. Previous theoretical results have shown that vanilla \sgdName does not exhibit high probability convergence. In particular, \citet{sadiev2023high} demonstrated that under a bounded variance setting, \sgdName fails to achieve this property, mainly due to noise injected in the final iteration. In contrast, for adaptive methods such as \clippedName \citep{nguyen2023improved} or \nsgd\ \Cref{cor:main.optimal_hp}, one can establish a high probability convergence with a mild linear dependence on $\log(1/\delta)$. However, extending our result to \nsgdm \space is challenging due to correlation issues introduced by momentum. Thus, the primary objective of this section is to empirically investigate whether \nsgdm \space might still exhibit high probability convergence similar to \nsgdName.

To achieve this, we evaluate the performance of the three algorithms on a simple quadratic function $F(x) = \frac{1}{2} \|x\|^2$,  $x\in \R^d$ using dimensions $d = 1$ and $d = 1000$. We introduce three types of noise during training: (standard) Normal, (component-wise) Pareto with $p = 1.5$ and $p = 2.5$, to simulate both light-tailed and heavy-tailed noise environments. Each algorithm is run $k = 10^5$ times over $T = 100$ iterations, and the convergence criterion is the average gradient norm over $T$ iterations. We present two plots for each set of experiments. The left plot visualizes the convergence behavior by selecting the median, $\delta$ and $1-\delta$ quantiles (where $\delta := 10^{-4}$) of the algorithm runs based on the average gradient norm at $T = 100$. These quantile-based trajectories are plotted against iteration $t = 1, \ldots, T$. The right plot shows the quantiles of  average gradient norm at $T = 100$ plotted against $\log(1/\delta)$. For algorithms with high probability convergence, this plot should have a sub-linear dependence on $\log(1/\delta)$.

\begin{figure}
	\centering
	\includegraphics[width=0.9\linewidth]{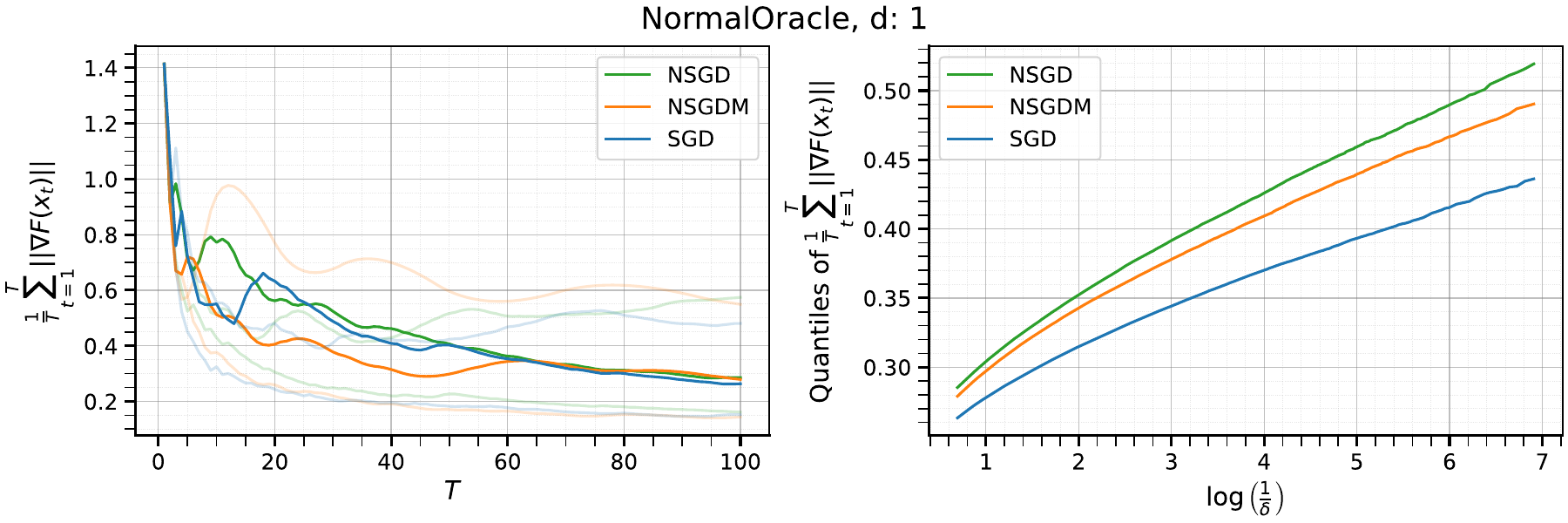}
	\caption{Light tail noise, $\xi_{t} \sim \mathcal N(0, I)$. }
	\label{fig:d1k10000var100_normal}
\end{figure}

\begin{figure}
	\centering
	\includegraphics[width=0.9\linewidth]{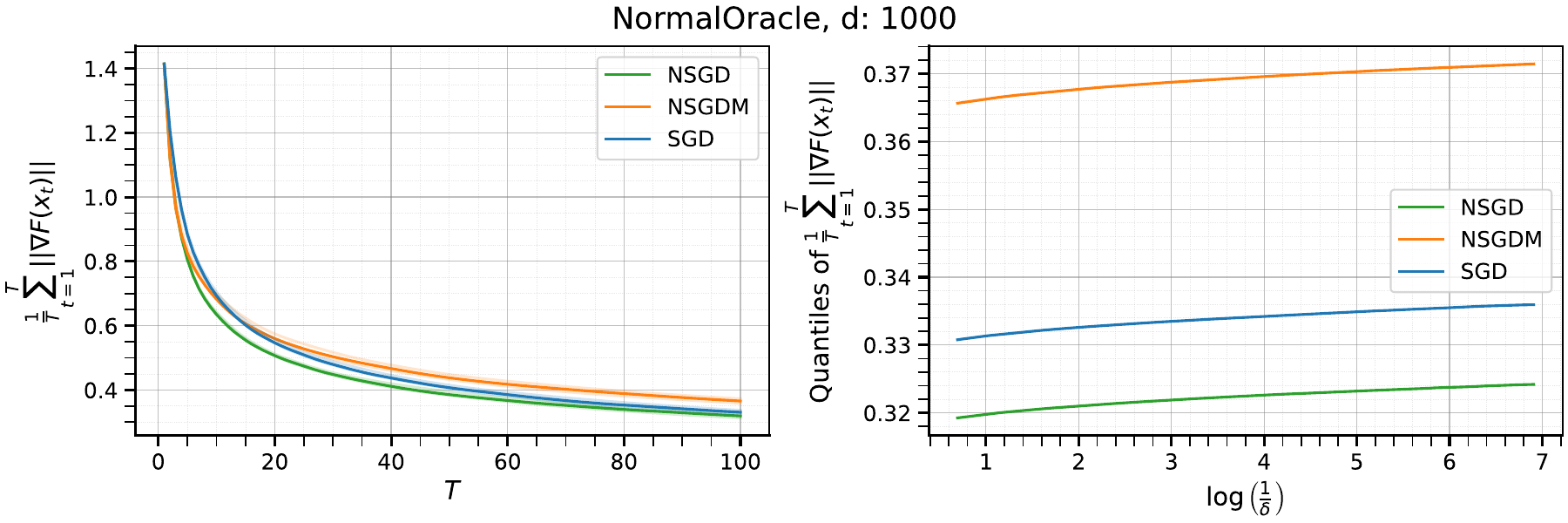}
	\caption{Light tail noise, $\xi_{t} \sim \mathcal N(0, I)$. }
	\label{fig:d1000k10000var100_normal}
\end{figure}

\paragraph{Light tailed noise.} Our results reveal that for the Normal noise distribution, which has light tails, all three algorithms exhibit sub-linear curves \Cref{fig:d1k10000var100_normal,fig:d1000k10000var100_normal}, indicating high probability convergence. However, for Pareto noise \Cref{fig:d1k10000var100_pareto25,fig:d1000k10000var100_pareto25,fig:d1k10000var100_pareto15,fig:d1000k10000var100_pareto15} (particularly with $p = 1.5$ and $p = 2.5$), which corresponds to infinite and finite variance regimes respectively, \sgdName exhibits a super-linear curve, confirming its lack of high probability convergence, consistent with theoretical predictions.

\paragraph{Heavy tailed noise.} Most importantly, we observe that while both \nsgd\ and \nsgdm\ exhibit similar qualitative behaviors when the noise has light tails, \Cref{fig:d1k10000var100_normal,fig:d1000k10000var100_normal}; the quantile dependence on $\log(1/\delta)$ can be super-linear under heavy tailed noise \Cref{fig:d1000k100000var1_pareto25_nsgdm,fig:d1000k100000var1_pareto15_nsgdm}, strongly suggesting that the momentum version of \nsgdName (\nsgdm) may not possess a high probability convergence as \nsgdName.

\begin{figure}
	\centering
	\includegraphics[width=0.9\linewidth]{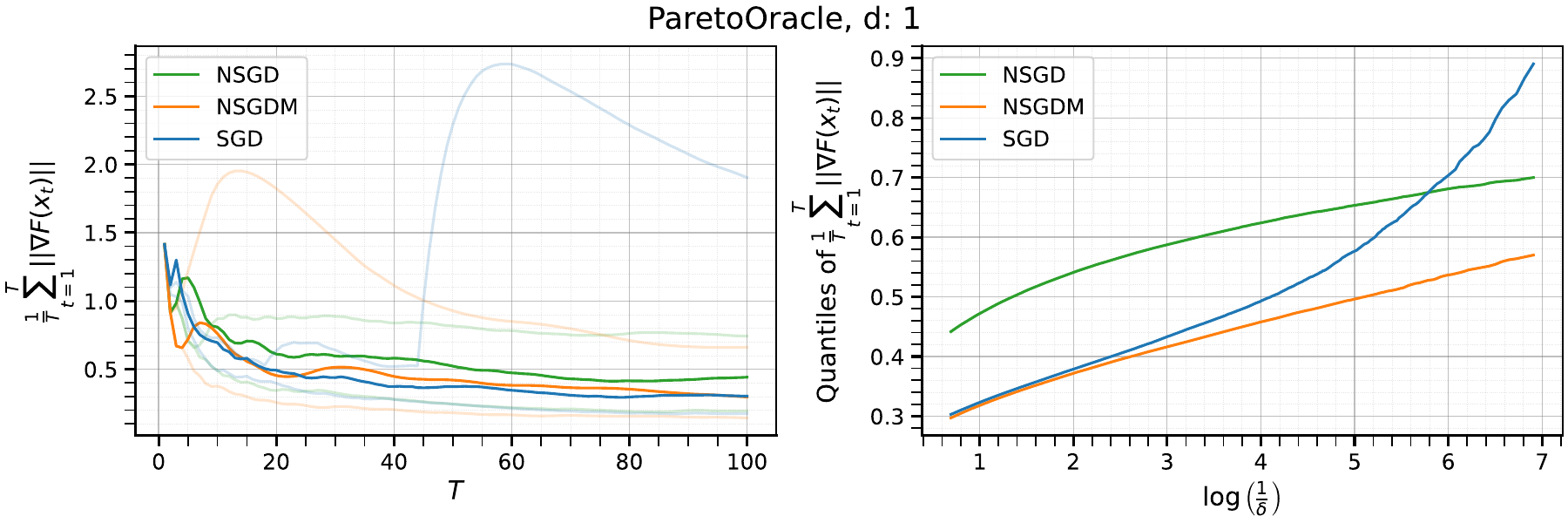}
	\caption{Heavy tailed noise with finite variance. Pareto with $p = 2.5$.}
	\label{fig:d1k10000var100_pareto25}
\end{figure}

\begin{figure}
	\centering
	\includegraphics[width=0.9\linewidth]{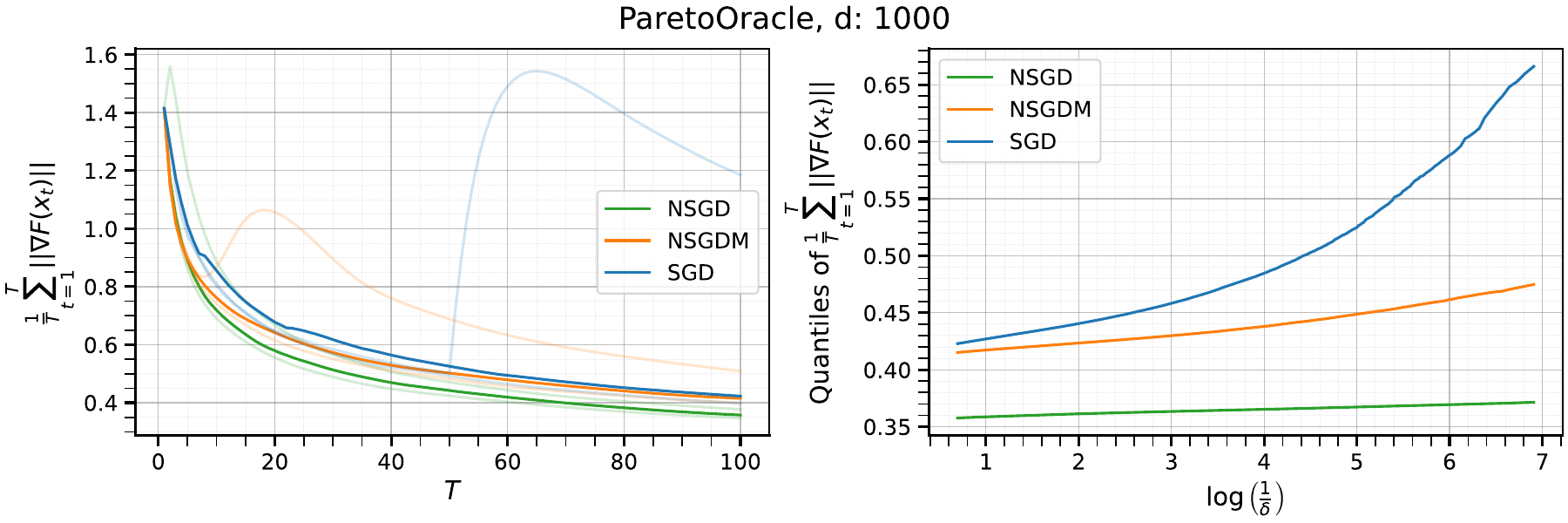}
	\caption{Heavy tailed noise with finite variance. Pareto with $p = 2.5$. See \Cref{fig:d1000k100000var1_pareto25_nsgdm} for the same experiment, but without \sgdName\ on the plot.}
	\label{fig:d1000k10000var100_pareto25}
\end{figure}

\begin{figure}
	\centering
	\includegraphics[width=0.9\linewidth]{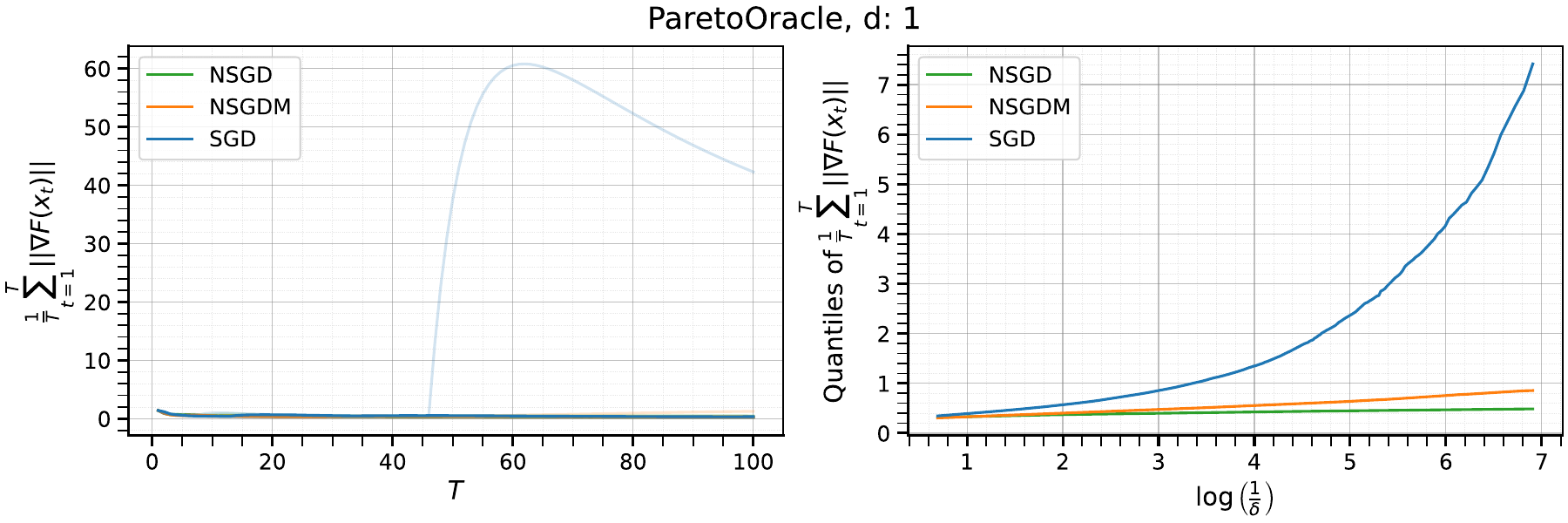}
	\caption{Heavy tailed noise with infinite variance. Pareto with $p = 1.5$.}
	\label{fig:d1k10000var100_pareto15}
\end{figure}

\begin{figure}
	\centering
	\includegraphics[width=0.9\linewidth]{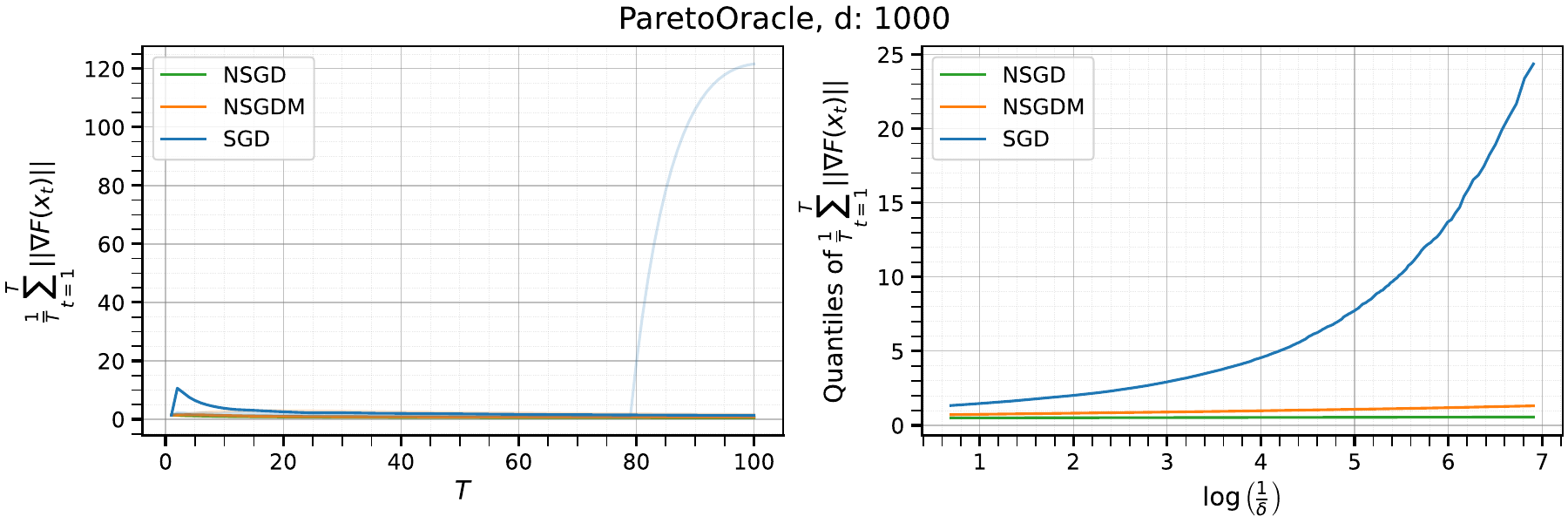}
	\caption{Heavy tailed noise with infinite variance. Pareto with $p = 1.5$. See \Cref{fig:d1000k100000var1_pareto15_nsgdm} for the same experiment, but without \sgdName\ on the plot.}
	\label{fig:d1000k10000var100_pareto15}
\end{figure}

\begin{figure}
	\centering
	\includegraphics[width=0.9\linewidth]{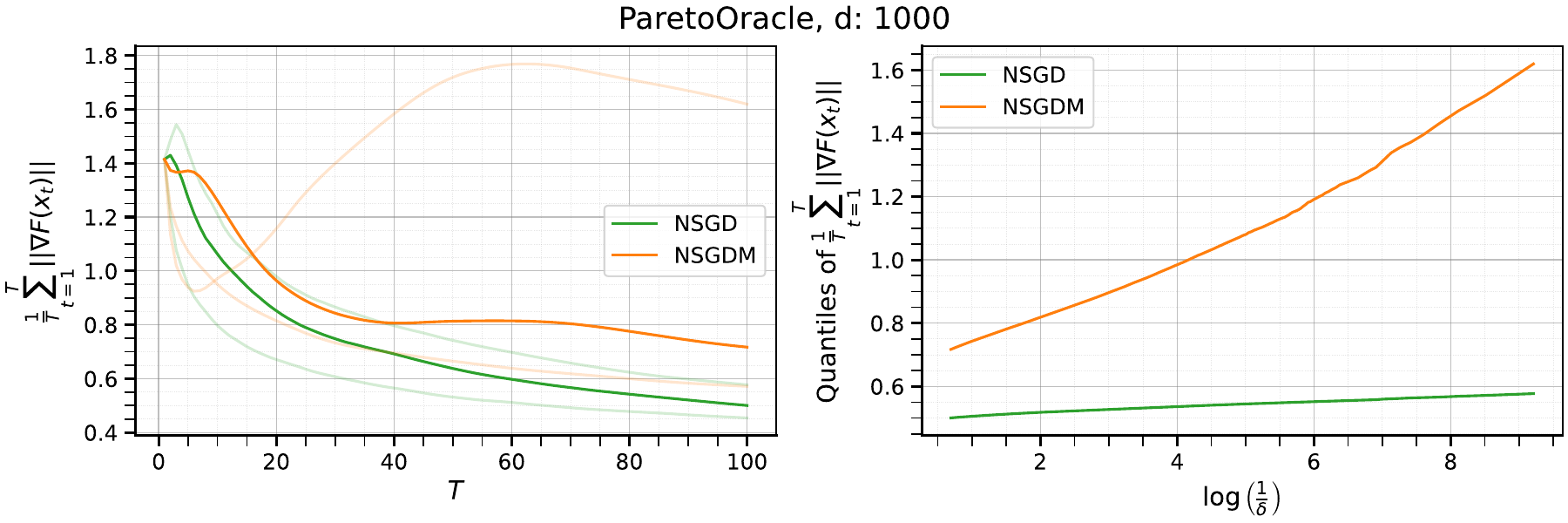}
	\caption{Heavy tailed noise with infinite variance. Pareto with $p = 1.5$.}
	\label{fig:d1000k100000var1_pareto15_nsgdm}
\end{figure}

\end{document}